\title{Separated quotients of Picard schemes}
\newcommand{\on}[1]{\operatorname{#1}}
\theoremstyle{definition}
\newtheorem{definition}{Definition}[section]
\theoremstyle{plain}
\newtheorem{lemma}[definition]{Lemma}
\newtheorem{theorem}[definition]{Theorem}
\newtheorem{corollary}[definition]{Corollary}
\theoremstyle{remark}
\newtheorem{remark}[definition]{Remark}
\renewcommand{\phi}{\varphi}
\newcounter{temp}
\author{Garnet Akeyr}
\date{\today}
\newcounter{nootje}
\newcommand{\beq}{\begin{equation}}
\newcommand{\eeq}{\end{equation}}
\newcommand{\beqs}{\begin{equation*}}
\newcommand{\eeqs}{\end{equation*}}
\begin{document}
\maketitle
\begin{abstract} 
We give some necessary and sufficient conditions for the existence of N\'{e}ron models of jacobians of semistable morphisms of arbitrary relative dimension over base schemes of arbitrary dimension. To do this, we introduce a notion of alignment for semistable morphisms over any regular base scheme, and show that the jacobian of an aligned projective semistable morphism admits a separated model with the N\'{e}ron mapping property. When the Picard scheme is smooth over the base scheme along its unit section we show that the converse holds.
\end{abstract}

\newcommand{\et}{^{\on{et}}}

\section{Introduction}\label{sec:one}
N\'{e}ron models were introduced in 1964 by N\'{e}ron \cite{neron} for abelian varieties over the fraction field of a Dedekind scheme. Their existence has been used to show seminal results in the study of abelian varieties, such as the Serre-Tate theorem on good reduction of abelian varieties \cite{serre-tate}. Raynaud \cite{raynaud} showed in 1970 that N\'{e}ron models are well-suited to the study of Jacobians of curves: given a family of curves $X$ over a Dedekind scheme $S$, smooth over a dense open subscheme $U\subset S$, the N\'{e}ron model of the relative Jacobian $J(X_U)$ of $X_U/U$ is a quotient of the relative Jacobian $J(X)$ of $X/S$. Contemporary results have expanded on this theory to the case of an arbitrary regular base scheme \cite{holmes}, nodal curves over traits \cite{giulio}, and constructing universal N\'{e}ron models for the moduli stack $\mathcal{M}_{g,n}$ of $n$-marked genus $g$ curves \cite{melo}, among other examples. Recall the definition of a N\'{e}ron model:

\begin{definition}
Let $S$ be a scheme, $U\subset S$ a dense open subscheme, and $A/U$ an abelian scheme. A N\'{e}ron model of $A/U$ over $S$ is a smooth, separated algebraic space $N$ over $S$ together with an isomorphism $N_U \cong A$, that satisfies the N\'{e}ron mapping property: Given any smooth algebraic space $T/S$ along with a morphism $f_U : T_U\rightarrow A$ over $U$, there exists a unique extension of $f_U$ to a morphism $f: T \rightarrow N$ over $S$
\end{definition}

\begin{remark}
The usual definition of N\'{e}ron models also requires them to be of finite type over $S$. The above definition is more commonly referred to as a N\'{e}ron \textit{lft} model, where the \textit{lft} stands for locally of finite type. We shall not use this terminology. 
\end{remark}

Let $X\rightarrow S$ be a semistable curve, where $S$ is a regular base scheme. That is, $X/S$ is proper, flat, and the fibres are semistable curves. Let $U\subset S$ be a dense open subscheme, and suppose furthermore that $X$ is smooth over $U$. Then Holmes showed in Section 5 of \cite{holmes} that the Jacobian of $X_U/U$ admits a N\'{e}ron model over $S$ if and only if the geometric fibres of $X/S$ are aligned, a combinatorial condition on the dual graphs of the geometric fibres.

The aim of this paper is to generalise this to higher dimensional semistable morphisms. One must replace the Jacobian by $\text{Pic}^0_{X_U/U}$, the connected component of the unit section of the Picard space of $X/S$. We extend the notion of alignment to higher-dimensions in Section \ref{sec:two}, and that alignment of a semistable morphism implies the existence of a separated quotient of the Picard scheme satisfying the N\'{e}ron mapping property.

\begin{theorem} [Theorem \ref{mainresult1} and Theorem \ref{mainresult2}]
Let $X\rightarrow S$ be a semistable morphism over a regular base scheme $S$, and assume $X$ is regular. Then $X\rightarrow S$ is aligned if and only if the closure $\text{clo}(e)$ of the unit section of $\text{Pic}^0_{X_U/U}$ in $\text{Pic}_{X/S}$ is \'{e}tale over $S$.
\end{theorem}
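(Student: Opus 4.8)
The plan is to prove the two implications separately, as the split into Theorem~\ref{mainresult1} (alignment $\Rightarrow$ étale) and Theorem~\ref{mainresult2} (étale $\Rightarrow$ alignment) indicates. Both properties are local on $S$: étaleness of $\on{clo}(e)\to S$ may be tested on strict henselisations (or completions) at points $s\in S$, and alignment is defined through the dual graph $\Gamma_s$ of the geometric fibre $X_{\bar s}$ together with the branch divisors of its double locus. So I would first reduce to $S=\on{Spec}R$ with $R$ a strictly henselian regular local ring, closed point $s$, and dual graph $\Gamma=\Gamma_s$. The regularity of $X$ is essential here: it forces each edge $e$ (component of the double locus of $X_{\bar s}$) to be cut out étale-locally by $uv=f_e$ with $V(f_e)$ a \emph{regular} prime divisor $D_e\subset S$, so that the label of each edge is an honest prime divisor and the period of a circuit $\gamma=\sum_e n_e\,e\in H_1(\Gamma,\bb{Z})$ is a well-defined element $q_\gamma=\prod_e f_e^{n_e}$ whose divisor is $\sum_e n_e D_e$.

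The technical heart, and what I expect to be the main obstacle, is to identify $\on{clo}(e)$ explicitly near the closed fibre. For this I would establish that, étale-locally around the unit section, $\on{Pic}^{0}_{X/S}$ has semiabelian degeneration whose toric part carries character lattice $H_1(\Gamma,\bb{Z})$, and that the way the torus $\bb{G}_m^{b_1(\Gamma)}$ closes up is governed by the periods $q_\gamma$ in the usual Mumford/Tate manner. Concretely I would produce a formal (or relatively complete) model in which the unit section of $\on{Pic}^{0}_{X_U/U}$, lifted to the uniformising torus, is the period lattice $\{\prod_\gamma q_\gamma^{m_\gamma}\}$, and show that $\on{clo}(e)$ is cut out by the resulting period relations. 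The delicate point is that $\on{Pic}^{0}$ over a higher-dimensional, possibly singular base does \emph{not} commute with base change, so the codimension~$\ge 2$ behaviour at $s$ cannot be read off from one-dimensional arcs $\on{Spec}V\to S$; I would therefore have to control this local model directly over $R$, using the regularity of $X$ to pin down the node structure.

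Granting this description, the implication alignment $\Rightarrow$ étale would go as follows. If $X/S$ is aligned then for every circuit $\gamma$ all the divisors $D_e$, $e\in\gamma$, coincide with a single regular prime divisor $D_\gamma=V(t_\gamma)$, so $q_\gamma$ is a unit times a power of the \emph{single} parameter $t_\gamma$. Étale-locally the degeneration of each cycle is thus concentrated along one smooth divisor, which reduces the analysis to the trait obtained by localising at the generic point of $D_\gamma$; there Raynaud's theory applies, $\on{Pic}^{0}$ is semiabelian and separated, and the closure of the unit section is the unit section itself. I would then check that this separatedness spreads out over $S$ and that the finitely many cycles interact compatibly, concluding that $\on{clo}(e)=e_S(S)$ is an isomorphism onto $S$, hence étale.

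For the converse I would argue by contraposition and exhibit the failure directly. If $X/S$ is not aligned there is a circuit $\gamma$ with edges $e_1,e_2\in\gamma$ having distinct branch divisors $D_1\neq D_2$ meeting along a codimension~$2$ stratum through $s$. The period $q_\gamma$ then factors through the two independent parameters $f_{e_1},f_{e_2}$, and in the toric model the limit of the identity along $U$ involves the ratio $f_{e_1}:f_{e_2}$, which is indeterminate at $s$. I would make this precise by showing that the fibre of $\on{clo}(e)$ over $s$ is strictly larger than a single reduced point --- acquiring either a positive-dimensional piece (the exceptional locus of the blow-up separating $D_1$ from $D_2$) or extra length --- so that $\on{clo}(e)\to S$ fails to be flat and hence is not étale. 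It is precisely here that the smoothness of $\on{Pic}_{X/S}$ along its unit section (the hypothesis flagged in the abstract) is used, to compare this fibre with the generic fibre and detect the jump. The essential difficulty throughout is that the obstruction lives in codimension~$2$ and is invisible to the naive base change $\on{Pic}^{0}_{X/S}\times_S V$, so the whole argument rests on the local model of the second step, which is why I regard that identification as the crux.
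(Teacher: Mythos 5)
Your proposal rests entirely on a local model for $\operatorname{Pic}^{0}_{X/S}$ near the unit section as a semiabelian degeneration uniformised by a torus with character lattice $H_1(\Gamma,\mathbb{Z})$, with $\operatorname{clo}(e)$ cut out by period relations $q_\gamma=\prod_e f_e^{n_e}$. You correctly flag this identification as ``the crux,'' but you never supply an argument for it, and over a base of dimension $\geq 2$ it is not an off-the-shelf fact: Mumford--Faltings--Chai style uniformisation is developed over complete local normal (essentially one-dimensional or formal) situations, and the whole difficulty of this theorem is precisely the codimension-$\geq 2$ behaviour that such a model would have to encode. Asserting the model is essentially asserting the theorem. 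The paper avoids this entirely: it never constructs a uniformisation. Instead it characterises points of $\operatorname{clo}(e)$ over the closed point $s$ of a strictly henselian base as images of non-degenerate traits $\mathcal{T}\to\operatorname{clo}(e)$, translates these into vertical Cartier divisors on $X_{\mathcal{T}}$ via $\mathcal{T}$-Cartier vertex labellings of $\Gamma_s$ (Lemma \ref{cartieriffTcartier}), and reduces \'{e}taleness of $\operatorname{clo}(e)\to S$ to the existence of sections through every such point (Lemma \ref{droppingproj}, via Rydh's Lemma (C.2) and Holmes's Lemma 5.17). Sections are then produced, when $X/S$ is aligned, by the explicit divisors $\operatorname{div}(a;H)$ of Lemma \ref{weiliscartier}, and obstructed, when it is not, by the multiplicative relation extracted in Lemma \ref{reverseimplication}. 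None of this appears in your sketch, and your forward direction has a concrete internal inconsistency: you propose to verify separatedness by localising at the generic point of the divisor $D_\gamma$ and invoking Raynaud over the resulting trait, i.e.\ a codimension-one check, after having yourself observed that the obstruction ``lives in codimension $2$ and is invisible to'' one-dimensional arcs. Flatness of $\operatorname{clo}(e)$ at $s$ genuinely cannot be deduced from its behaviour at codimension-one points, so ``this separatedness spreads out over $S$'' is exactly the step that needs a proof and does not have one.

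Two further points. First, your attribution of hypotheses is off: the equivalence ``aligned $\iff$ $\operatorname{clo}(e)$ \'{e}tale'' is Theorem \ref{mainresult1} in its entirety (both directions), and it does \emph{not} use smoothness of $\operatorname{Pic}_{X/S}$ along the unit section; that hypothesis enters only in Theorem \ref{mainresult2}, for the existence of the N\'{e}ron model as a quotient. Your contrapositive argument should therefore not lean on it. Second, alignment of a circuit gives $(b_1)^{n_1}=(b_2)^{n_2}$ as ideals, hence (by factoriality of the regular local ring) that all edge labels on the circuit are powers of a common element; it does not give that the branch divisors literally coincide as you assert, though the supports do. This is repairable, but the main gap --- the unestablished uniformisation model and the missing section-existence argument it was meant to replace --- is not.
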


Furthermore, if the Picard scheme of $X/S$ is smooth along the unit section, such as when one restricts to characteristic $0$, and if $X/S$ is projective then one has a much stronger result.

\begin{theorem}[Theorem \ref{mainresult2}]
Let $X\rightarrow S$ be a projective semistable morphism with $X$ regular and $S$ an excellent, regular, locally Noetherian scheme. Assume also that $\text{Pic}_{X/S}$ is smooth over $S$ along the unit section. Then a N\'{e}ron model for $\text{Pic}^0_{X_U/U}$ exists if and only if $X/S$ is aligned.
\end{theorem}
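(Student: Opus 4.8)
The strategy is to reduce both implications to the first theorem, which identifies alignment of $X/S$ with the \'etaleness of $\on{clo}(e)$ over $S$, and to use the hypothesis that $\on{Pic}_{X/S}$ is smooth along its unit section as the bridge between the Néron property and this \'etaleness. Write $G := \Pictdz_{X/S}$ and $H := \on{clo}(e)$, so that $G$ is an $S$-group algebraic space with $G_U \cong \Picz_{X_U/U}$ and $H \sub G$ is the closed $S$-subgroup obtained as the closure of the unit section, with $H_U = e_U$. Since $\on{Pic}_{X/S}$ is smooth over $S$ along $e$ and $G$ is a group, translation makes $G$ smooth over $S$; this is the only place the smoothness hypothesis enters, and it is what guarantees that the candidate Néron model below is genuinely smooth.

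For the implication (aligned $\Rightarrow$ Néron model), the first theorem gives that $H$ is \'etale, hence a flat closed $S$-subgroup of $G$, so the fppf quotient $N := G/H$ is representable by an algebraic space. As $H$ is \'etale the projection $G \ra N$ is \'etale, so $N$ is smooth over $S$; as $H$ is the closure of $e$, its image in $N$ is the unit section and is closed, so $N$ is separated; and restricting to $U$ gives $N_U \cong \Picz_{X_U/U}$. It remains to check the Néron mapping property. Given a smooth $T/S$ and $f_U : T_U \ra \Picz_{X_U/U}$, uniqueness of an extension follows from separatedness of $N$ together with the fact that $T$ is regular, hence reduced, and $T_U$ dense. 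For existence I would, fppf-locally on $T$, represent $f_U$ by a line bundle on $X_{T_U}$; since $X$ is regular and $T\ra S$ is smooth, $X_T = X\times_S T$ is regular, so this line bundle extends to $X_T$, the extension being unique up to twisting by divisors supported over $S\smallsetminus U$. Such a twist defines a section of $\on{Pic}_{X/S}$ agreeing with $e$ over the dense open $T_U$, hence factoring through $H = \on{clo}(e)$; therefore, after composing with the quotient $G \ra N$, the resulting map $T \ra N$ is independent of all choices, descends from the fppf cover, and extends $f_U$.

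For the converse (Néron model $\Rightarrow$ aligned), suppose $N$ is a Néron model. Since $G$ is smooth over $S$, the Néron mapping property applied to $G$ and to the isomorphism $G_U \cong N_U$ produces a morphism $q : G \ra N$ over $S$ extending it, which by rigidity is a homomorphism of $S$-group spaces. The subgroup $q(H)$ agrees with $e$ over the dense open $U$ and $N$ is separated, so $q(H) = e$ and $q$ factors through $G/H \ra N$. I would then argue that $q$ is smooth and surjective: the image of the homomorphism $q$ of smooth $S$-groups is open, it contains $N_U$, and the Néron property pins it down to be all of $N$, while flatness yields smoothness. Granting this, $\ker q$ is smooth of relative dimension $0$ over $S$, i.e.\ \'etale; since $H \sub \ker q$ and both restrict to $e_U$ over the dense $U$ while $\ker q$ is \'etale, we get $H = \ker q$, so $\on{clo}(e)$ is \'etale over $S$. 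By the first theorem, $X/S$ is aligned.

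The main obstacle in the forward direction is the existence part of the Néron mapping property, specifically controlling the indeterminacy in the extension of the line bundle and showing it is exactly absorbed by $\on{clo}(e)$: one must verify that the classes of vertical divisors lie in $H$ and that this identification is compatible with fppf descent of the rigidification data. In the converse direction the delicate point is establishing that $q : G \ra N$ is smooth and surjective rather than merely a homomorphism with the correct generic fibre; this is where both the projectivity of $X/S$ (ensuring good representability and that $N$ is exhausted by the images of the relevant components of $\on{Pic}_{X/S}$) and the smoothness of $\on{Pic}_{X/S}$ along $e$ (making the relative-dimension count that forces $\ker q$ to be \'etale) are essential.
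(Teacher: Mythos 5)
Your forward direction is essentially the paper's argument (quotient $\Pictdz_{X/S}/\on{clo}(e)$, separatedness from the unit section being closed, existence in the N\'eron mapping property by spreading out divisors over the regular scheme $X\times_S T'$, descent via separatedness), and it goes through. One gloss to flag: the smoothness of $G=\Pictdz_{X/S}$ does not follow from ``translation'' alone, since over a general base one cannot translate an arbitrary point of a closed fibre into the unit section. The paper proves it by showing (Lemma \ref{romagnylemma}) that the smooth locus of a group scheme smooth along $e$ is an open \emph{and closed} subgroup scheme, and then observing that $\Pictdz_{X/S}$, being the closure of $\Picz_{X_U/U}$ which lies in the smooth locus, is a union of connected components of that locus (Corollary \ref{sufficientsmoothness}). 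You need this statement in both directions of your proof, so it is worth isolating.

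The converse direction has a genuine gap, which you yourself flag but do not close: the flatness of $q\colon G\to N$. Your argument ``the image of the homomorphism $q$ of smooth $S$-groups is open \dots while flatness yields smoothness'' assumes the flatness it is meant to produce; over a base of dimension $>1$ the openness of the image of a group homomorphism is not automatic and is essentially equivalent to the flatness in question. This is the technical heart of the paper's proof of (2): one first reduces to flatness of the fibres $q_s$ by the crit\`ere de platitude par fibres (both $G$ and $N$ being flat over $S$), then covers $\Pictdz_{X_s/s}$ by translates of $\Picz_{X_s/s}$ and shows the restriction $q^0$ to fibrewise identity components is an \emph{open immersion}; this last step requires proving that $\Picz_{X/S}$ is separated over $S$ (Lemma \ref{groupisseparated}, a valuative-criterion argument for flat quasi-separated group schemes with connected fibres) before invoking Proposition 3.1(e) of \cite{SGA7}. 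Note also that surjectivity of $q$ is neither needed nor obviously available, and your dimension count for $\ker q$ presupposes it; the paper avoids it entirely by showing directly that $K=\ker q$ is flat, closed in $G$, and has $K_U=e_U$ schematically dense, whence $K=\on{clo}(e)$. Granting flatness of $q$, your final identification $H=\ker q$ is correct for exactly that reason, but without an actual proof of flatness the converse is incomplete.
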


\subsection{Idea of proof}
One shows that the Picard space $\text{Pic}_{X/S}$ satisfies the existence part of the N\'{e}ron mapping property. Separatedness would imply uniqueness, but this is rarely the case: the failure of $\text{Pic}_{X/S}$ to be separated is equivalent to the failure of the unit section to be a closed immersion. If the closure of the unit section $\text{clo}(e)$ is flat over $S$, one may construct a quotient of $\text{Pic}_{X/S}$ by $\text{clo}(e)$ that is separated and satisfies the N\'{e}ron mapping property. 

It is shown in \cite{holmes} that, for semistable curves, the closure of the unit section is flat over $S$ if and only if $X/S$ is aligned. Namely, alignment is a condition that allows one to construct certain Cartier divisors on $X/S$ that correspond to points in $\text{clo}(e)$. We construct a notion of alignment for higher dimensional varieties that generalises the case of curves. This relies on a stratification of the fibres of $X/S$ by certain subschemes. We then construct maps between the dual graphs of the geometric fibres, and finally show that this generalisation is equivalent to the flatness of $\text{clo}(e)$. 

Finally, we show that the smoothness of the N\'{e}ron model is ensured if $\text{Pic}_{X/S}$ is smooth along its unit section. The idea is to show that the smooth locus of $\text{Pic}_{X/S}$ is an open and closed subgroup scheme containing $\text{Pic}^0_{X/S}$. In this case one then shows that if a N\'{e}ron model exists, then the closure of the unit section is flat over $S$. 

\subsection{Overview of paper}
We introduce the notions of semistable morphisms and the dual graphs of their fibres in Section \ref{sec:two}, as well as introduce the notion of alignment of a labelled graph. Section \ref{sec:three} studies at how the fibres of semistable morphisms are related under specialisation and generalisation of the points in $S$. The main result here is the construction of a map of dual graphs $\text{sp}:\Gamma_s\rightarrow \Gamma_{\eta}$ over points $s$ and $\eta$ in $S$, where $\eta$ is a generalisation of $s$ and $\Gamma_p$ is the dual graph of the fibre $X_p$ for a point $p\in S$. This is used in Section \ref{sec:four} to show that certain Weil divisors on $X/S$ are also Cartier divisors, and that furthermore these Cartier divisors only exist when $X/S$ is aligned at all geometric points. In Section \ref{sec:five} we show our notion of alignment is equivalent to the flatness of the unit section in the Picard scheme of $X/S$, which in turn allows us to construct a separated quotient of the Picard scheme satisfying the N\'{e}ron mapping property.

\subsection{Acknowledgements}
The author would like to thank their supervisor, David Holmes, for introducing them to this problem, as well as for his guidance and insights on the problem. The author would also like to thank Bas Edixhoven, Raymond van Bommel, and Giulio Orecchia for many helpful conversations and discussions on N\'{e}ron models, as well as David Rydh for his suggestion to use Lemma (C.2) of his paper \cite{rydh} in the proof of \hyperref[mainresult1]{Theorem \ref*{mainresult1}}, and Matthieu Romagny for his insight on \hyperref[romagnylemma]{Lemma \ref*{romagnylemma}}.

\section{Semistable morphisms and alignment}\label{sec:two}
\newcommand{\mono}{\textbf{Mon}_0}

\subsection{Definition of semistable morphism}
\begin{definition}
\label{semistablemorphism}
Let $S$ be a locally Noetherian scheme. A morphism $f: X\rightarrow S$ is a \textit{semistable morphism} if it is proper with geometrically connected fibres, and such that the following condition holds: For all $x\in X$ with image $s\in S$, there exists

\begin{itemize}
\item an \'{e}tale neighbourhood $(\text{Spec}(R), s')$ of $(S,s)$;
\item an \'{e}tale neighbourhood $(U,x')$ of $(X,x)$ where $U$ is a connected scheme;
\item an element $b\in R$;
\item an \'{e}tale morphism $U\rightarrow \text{Spec}(R[x_1, \ldots, x_n] / (x_1\cdots x_l - b))$,
\end{itemize}

making the following diagram commute:

\begin{equation}\label{semistablediagram}
\begin{tikzcd}
& U \arrow[dl, "\text{\'{e}t}" above] \arrow[dr, "\text{\'{e}t}"] & \\
\text{Spec}(R[x_1, \ldots, x_n] / (x_1\cdots x_l - b)) \arrow[d] &    & X \arrow[d, "f"] \\
\text{Spec}(R) \arrow[rr, "\text{\'{e}t}"] & & S
\end{tikzcd}
\end{equation}
\end{definition}

\begin{definition}
Let $f: X\rightarrow S$ be a semistable morphism. Let $x\in X$ be a point with image $s\in S$, and suppose we have a diagram as in  (\ref{semistablediagram}), where $(U,x')$ is an \'{e}tale neighbourhood of $(X,x)$. Let $p$ be the image of $x'$ in $\text{Spec}(R[x_1, \ldots, x_n] / (x_1\cdots x_l - b))$. If $\{x_1, \ldots, x_l\}$ are contained in the prime ideal corresponding to $p$, we say that the data of a diagram as in (\ref{semistablediagram}) is a \textit{local chart of $x$}.

We use $U\rightarrow \text{Spec}(R[x_1, \ldots, x_n] / (x_1\cdots x_l - b))$ to denote the data of a local chart of $x$. 
\end{definition}

\begin{remark}
The definition of local chart is a special case of that given in definition 1.1 of \cite{li}. 
\end{remark}

 The next lemma is immediately verified by the reader.
 
\begin{lemma}
If $f: X\rightarrow S$ is a semistable morphism, then every point $x\in X$ admits a local chart.
\end{lemma}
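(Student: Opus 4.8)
The plan is to observe that the data in Definition~\ref{semistablemorphism} is \emph{almost} a local chart: the only extra requirement is that the product-coordinates $x_1,\ldots,x_l$ all lie in the prime $\mathfrak{p}$ corresponding to $p=\on{im}(x')$. I would show that this can always be arranged by shrinking $U$ and performing a coordinate change, crucially without altering the base $\on{Spec}(R)$. So I would start from a diagram as in~(\ref{semistablediagram}) for $x$, with \'{e}tale map $U\to \on{Spec}(A)$, where $A=R[x_1,\ldots,x_n]/(x_1\cdots x_l-b)$, and after reordering the coordinates assume $x_1,\ldots,x_k\notin\mathfrak{p}$ and $x_{k+1},\ldots,x_l\in\mathfrak{p}$ for some $0\le k\le l$; these are precisely the product-coordinates failing to vanish at $p$. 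Each $x_i$ with $i\le k$ is then invertible on the Zariski-open $V:=\{x_1\cdots x_k\neq 0\}\subseteq\on{Spec}(A)$, which contains $p$.

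The key step, and the one point requiring real care, is to identify $V$ with an open subscheme of a standard chart \emph{over the same ring $R$} in which every product-coordinate vanishes at the image of $p$. The naive move of inverting $x_1,\ldots,x_k$ and absorbing them into the base must be avoided, since $R\to R[x_1^{\pm},\ldots,x_k^{\pm}]$ is not \'{e}tale and would destroy the requirement that the left-hand vertex map down to $\on{Spec}(R)$; instead one absorbs the unit into a fibre coordinate. Concretely, when $k<l$ the $R$-algebra automorphism of $\mathcal{O}(V)=A[(x_1\cdots x_k)^{-1}]$ fixing every coordinate except $x_{k+1}$ and sending $x_{k+1}\mapsto (x_1\cdots x_k)^{-1}x_{k+1}$ carries the relation $x_1\cdots x_l-b$ to $x_{k+1}\cdots x_l-b$. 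Hence $V$ is isomorphic to the open locus $\{x_1\cdots x_k\neq 0\}$ inside $\on{Spec}\!\big(R[x_1,\ldots,x_n]/(x_{k+1}\cdots x_l-b)\big)$; relabelling so the product-coordinates $x_{k+1},\ldots,x_l$ come first exhibits a standard chart, the inclusion of $V$ is an open immersion hence \'{e}tale, and since multiplication by the unit $x_1\cdots x_k$ preserves membership in $\mathfrak{p}$, all of its product-coordinates lie in the prime corresponding to the image of $p$. In the residual case $k=l$ the element $b$ is a unit on $V$, and eliminating $x_l$ identifies $V$ with an open subscheme of $\mathbb{A}^{n-1}_R$, a standard chart with no product-coordinates, for which the vanishing condition is vacuous.

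I would then conclude by restricting along $U$. Set $U':=U\times_{\on{Spec}(A)}V$, an open subscheme of $U$ containing $x'$, and replace it by the connected component $U''$ of $x'$, which is open because $U$ is locally Noetherian and hence locally connected. The composite $U''\to V\hookrightarrow(\text{standard chart})$ is \'{e}tale, $U''\to X$ is \'{e}tale, and the resulting square commutes over the unchanged map $\on{Spec}(R)\to S$; by construction every product-coordinate lies in the prime corresponding to the image of $x'$, so this data is a local chart of $x$. I expect the genuine subtlety to be confined to the coordinate change of the second paragraph, which keeps $\on{Spec}(R)$ intact while clearing the non-vanishing product-coordinates; everything else is bookkeeping.
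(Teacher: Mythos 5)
Your argument is correct and is exactly the verification the paper has in mind: the paper states this lemma without proof (``immediately verified by the reader''), and the content is precisely your coordinate change $x_{k+1}\mapsto (x_1\cdots x_k)^{-1}x_{k+1}$ over the unchanged base $\on{Spec}(R)$, which clears the non-vanishing product-coordinates after shrinking to the locus where they are invertible. The only cosmetic point is that in the residual case one should present the resulting smooth chart in the form $\on{Spec}(R[x_1,\ldots,x_n]/(x_1-0))$ so that the vanishing condition is literally satisfied rather than vacuous.
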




If $U\rightarrow \text{Spec}(R[x_1, \ldots, x_n] / (x_1\cdots x_l - b))$ is a local chart of a point $x$, the following lemma of Li shows that the integer $l$ is independent of the local chart.

\begin{lemma}[\cite{li}, Lemma 3.2]
\label{lilabellemma}
Let $f: X \rightarrow S$ be a semistable morphism. Let $x\in X$, $s = f(x)$, and let 
\[U \rightarrow \text{Spec}(R[x_1, \ldots x_n]/(x_1\cdots x_l -b))\]
and 
\[U' \rightarrow \text{Spec}(R'[x_1, \ldots x_n]/(x_1\cdots x_{l'} -b'))\]
be two local charts of $f$ at $x$. Then
\begin{itemize}
\item $l = l'$,
\item the images of $b$ and $b'$ in $\mathcal{O}_{S,s}^{\text{sh}}$ satisfy $b = ub'$ for some unit $u\in \mathcal{O}_{S,s}^{\text{sh}}$.
\end{itemize}
Here $\mathcal{O}_{S,s}^{\text{sh}}$ denotes the strict henselisation of $\mathcal{O}_{S,s}$ for a fixed separable closure of $k(s)$. 
\end{lemma}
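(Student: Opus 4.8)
The plan is to exhibit both $l$ and the ideal $(b)$ as invariants of the strict henselian local ring $\mathcal{O}_{X,\bar{x}}^{\text{sh}}$, regarded as an algebra over $\Lambda := \mathcal{O}_{S,s}^{\text{sh}}$, and to observe that each local chart computes this algebra. Fix a geometric point $\bar{x}$ above $x$ lying over a geometric point $\bar{s}$ of $S$; a local chart $U \to \on{Spec}(A)$ with $A = R[x_1,\ldots,x_n]/(x_1\cdots x_l - b)$ provides compatible geometric points $\bar{p}$ of $\on{Spec}(A)$ and $\bar{s}'$ of $\on{Spec}(R)$. Since an \'{e}tale morphism induces an isomorphism on strict henselisations, the two \'{e}tale legs $U \to X$, $U \to \on{Spec}(A)$ together with $\on{Spec}(R)\to S$ give canonical identifications $\mathcal{O}_{X,\bar{x}}^{\text{sh}} \cong \mathcal{O}_{\on{Spec}(A),\bar{p}}^{\text{sh}}$ and $\Lambda \cong \mathcal{O}_{\on{Spec}(R),\bar{s}'}^{\text{sh}}$, compatible with the structure maps to $S$. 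Unwinding the chart (the coordinates $x_{l+1},\ldots,x_n$ become free henselian variables $y_1,\ldots,y_r$, inverting those that are units at $\bar{p}$) presents this $\Lambda$-algebra as the henselisation of $\Lambda[x_1,\ldots,x_l,y_1,\ldots,y_r]/(x_1\cdots x_l - b)$ with $b \in \mathfrak{m}_\Lambda$. Because either chart produces the same $\Lambda$-algebra $\mathcal{O}_{X,\bar{x}}^{\text{sh}}$, it suffices to recover $l$ and $(b)\subseteq\Lambda$ from it; then $l=l'$ and $(b)=(b')$ in $\Lambda$, and the latter is exactly the assertion $b=ub'$ for a unit $u$.

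To recover $l$, I would pass to the closed fibre $\mathcal{O}_{X,\bar{x}}^{\text{sh}} \otimes_\Lambda k(\bar{s})$, which is the strict henselian local ring of $X_{\bar{s}}$ at $\bar{x}$ and, by the chart, is the henselisation of $k(\bar{s})[x_1,\ldots,x_l,\vec{y}]/(x_1\cdots x_l)$. Its minimal primes are $(x_1),\ldots,(x_l)$ --- each branch $\{x_i=0\}$ is smooth, hence stays irreducible after henselisation --- so $l$ is the number of branches of the geometric fibre at $\bar{x}$, manifestly independent of the chart.

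For $(b)$ I would use the non-smooth locus. Let $d$ be the relative dimension of $X/S$ at $x$ (the fibre dimension, intrinsic) and set $Z = V(\mathcal{I})$ with $\mathcal{I} = \on{Fitt}_d(\Omega_{X/S})$, an intrinsic ideal sheaf cutting out the locus where $X/S$ is not smooth. In the chart $\Omega_{X/S}$ is presented by the single column $(g_1,\ldots,g_l,0,\ldots,0)^{\mathsf T}$ with $g_i = \prod_{k\neq i}x_k$, so $\mathcal{I} = (g_1,\ldots,g_l)$. Since $b = x_1 g_1 \in \mathcal{I}$, the element $b$ vanishes on $Z$, so the structure map factors as $\Lambda \to \Lambda/(b) \to \mathcal{O}_{Z,\bar{x}}$. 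The section $x_i,y_j \mapsto 0$ yields $\mathcal{O}_{X,\bar{x}}^{\text{sh}}/(x_1,\ldots,x_l,y_1,\ldots,y_r) = \Lambda/(b)$, and as each $g_i \mapsto 0$ this descends to a retraction $\mathcal{O}_{Z,\bar{x}} \twoheadrightarrow \Lambda/(b)$ of $\Lambda/(b) \to \mathcal{O}_{Z,\bar{x}}$; hence the latter is injective and $(b) = \ker(\Lambda \to \mathcal{O}_{Z,\bar{x}})$ is the ideal of the scheme-theoretic image of $Z$ in $\on{Spec}(\Lambda)$, which is intrinsic. This treats $l\geq 2$; the case $l=1$ is a smooth point of the fibre, where the statement is handled directly.

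The main obstacle is precisely this scheme-theoretic recovery of $(b)$: a purely set-theoretic description of the non-smooth locus would only pin down $\sqrt{(b)}$, whereas $b=ub'$ concerns the principal ideal on the nose. Controlling the nilpotent structure --- that is, proving the splitting $\Lambda/(b)\hookrightarrow \mathcal{O}_{Z,\bar{x}}$, equivalently that the scheme-theoretic image of $Z$ is $V(b)$ and not $V(\sqrt{b})$ --- is exactly where the explicit normal form and the Fitting-ideal computation are indispensable. The remaining ingredients (\'{e}tale-invariance of strict henselisations and the reduction to the normal form) are standard.
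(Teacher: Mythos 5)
First, note that the paper does not prove this statement at all: it is imported verbatim as Lemma 3.2 of \cite{li}, so there is no in-paper argument to compare against. Your strategy is the natural one and is essentially sound: both charts compute the same $\Lambda$-algebra $\mathcal{O}_{X,\bar{x}}^{\mathrm{sh}}$, the integer $l$ is the number of minimal primes of the strictly henselian local ring of the geometric fibre (each branch $V(x_i)$ is regular, hence stays irreducible after strict henselisation), and $(b)$ is recovered as $\ker\bigl(\Lambda \to \mathcal{O}_{Z,\bar{x}}\bigr)$ where $Z = V(\operatorname{Fitt}_d(\Omega_{X/S}))$; the Fitting-ideal computation $\mathcal{I}=(g_1,\dots,g_l)$ and the containment $(b)\subseteq\mathcal{I}$ are correct, and you rightly identify that the scheme structure on $Z$ (not just its underlying set) is what lets you see $(b)$ rather than $\sqrt{(b)}$.

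There is one genuine soft spot, exactly at the step you flag as the crux: the claimed retraction $\mathcal{O}_{Z,\bar{x}} \twoheadrightarrow \Lambda/(b)$ induced by $x_i, y_j \mapsto 0$. That assignment is the zero-section $\operatorname{Spec}(R/(b)) \to Z$, whose closed point lands at the origin $(\mathfrak{m}_{s'},x_1,\dots,x_n)$ of the chart; it factors through $\operatorname{Spec}(\mathcal{O}_{Z,\bar{x}})$ only when $\bar{x}$ lies over that origin. The lemma, however, is stated for arbitrary $x$ in the non-smooth locus --- e.g.\ a generic point of a stratum, or a closed point at which some $x_j$ with $j>l$ takes a value not lifting to $\Lambda$ --- and in those cases the zero-section does not pass through $x$ and gives no splitting of $\Lambda/(b)\to\mathcal{O}_{Z,\bar{x}}$. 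The repair is short and is in fact already present elsewhere in the paper: $A/(g_1,\dots,g_l) \cong (R/(b))[x_1,\dots,x_n]/(g_1,\dots,g_l)$ is free over $R/(b)$ on the monomials in which at least two of the exponents of $x_1,\dots,x_l$ vanish (the computation of \hyperref[flatnessofsing]{Lemma \ref*{flatnessofsing}}, there carried out for $b=0$). Hence $Z$ is flat over $V(b)$ near $x$, so $\Lambda/(b)\to\mathcal{O}_{Z,\bar{x}}$ is a flat local homomorphism, therefore faithfully flat, therefore injective, which is all you need. Two minor further remarks: (i) the presentation of $\mathcal{O}_{X,\bar{x}}^{\mathrm{sh}}$ as a henselisation of $\Lambda[x_1,\dots,x_l,\vec{y}]/(x_1\cdots x_l-b)$ should be phrased as a localisation of that ring at the appropriate (possibly non-maximal) prime before henselising, though this does not affect either invariant; (ii) when $l=l'=1$ the point is smooth and the second bullet is genuinely not chart-independent (the chart target is just affine space over $R$), so the statement should be read as applying only to the non-smooth locus, where $l\geq 2$ and your argument applies.
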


\begin{remark}
When the fibres of $X/S$ are one-dimensional, one can show as in Proposition 2.5 of \cite{holmes} that if $f: X\rightarrow S$ is proper and flat whose fibres are semistable curves, then the \'{e}tale local rings of closed points in the non-smooth locus are of the form $\mathcal{O}_{S,\overline{s}}[[x,y]]/(xy-b)$, where $b$ is in the maximal ideal of $\mathcal{O}_{S,\overline{s}}$. The argument used for showing this does not generalise to higher dimensions, and so we impose the condition of semistability as in the Definition \ref{semistablemorphism} to ensure the \'{e}tale local rings have a similar form as in the case of curves.
\end{remark}

\begin{definition}
Let $f:X\rightarrow \text{Spec}(k)$ be a semistable morphism with $k$ a field, and let $\text{Sing}_f^0(X) = X$, and inductively define $\text{Sing}_f^k(X)$ as the non-smooth locus of $\text{Sing}_f^{k-1}(X)$, viewed as a closed subscheme of $X$ with the reduced induced subscheme structure. The \textit{$k$-strata of $X$} are the irreducible components of the smooth locus of $\text{Sing}_f^k(X)$.
\end{definition}

\begin{remark}\label{smoothlocusisdense}
Let $f: X\rightarrow S$ be a semistable morphism, $s\in S$ a point, and $X_s$ the fibre of $X$ over $s$. The property of being semistable is preserved under base change, and so $f: X_s\rightarrow s$ is a semistable morphism. If $X_s$ is of the form $k[x_1, \ldots, x_n]/(x_1\cdots x_l)$, then the non-smooth locus of $\text{Sing}_f^k(X)$ is of codimension $1$. In the general case it suffices to work \'{e}tale locally and conclude the same. The closure of a $k$-strata is an irreducible component of $\text{Sing}_f^k(X)$, and conversely the smooth locus of an irreducible component of $\text{Sing}_f^k(X)$ is a dense open subscheme. Hence there is a bijection between elements of the $k$-strata and irreducible components of $\text{Sing}_f^k(X)$.
\end{remark}

\begin{lemma}\label{compsareirr}
Let $k$ denote a separably closed field, and let $X\rightarrow\text{Spec}(k)$ be a semistable morphism. Then the elements of the $k$-strata of $X$ are geometrically irreducible. 
\end{lemma}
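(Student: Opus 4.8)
The plan is to reduce the statement to a general fact about schemes over a separably closed field, using only that each $k$-stratum is, by construction, an irreducible component of (the smooth locus of) $\operatorname{Sing}_f^k(X)$ and hence an irreducible reduced $k$-scheme. Precisely, I would prove the following: if $k$ is separably closed and $Y$ is any irreducible $k$-scheme, then $Y$ is geometrically irreducible. Granting this, the lemma is immediate upon taking $Y$ to be a $k$-stratum. Note that the explicit semistable local structure of Definition \ref{semistablemorphism} is not actually needed; the only input is the fact recorded in Remark \ref{smoothlocusisdense} that the strata are (dense opens in) irreducible components, together with the hypothesis that $k$ is separably closed.

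For the general fact, the key observation is that $\bar{k}/k$ is \emph{purely inseparable} whenever $k$ is separably closed. In characteristic $0$ a separably closed field is already algebraically closed, so $\bar{k}=k$; in characteristic $p$, the minimal polynomial of any $\alpha\in\bar{k}$ has the form $x^{p^e}-a$, since its separable part must be linear (as $k$ admits no nontrivial separable extensions), whence $\alpha^{p^e}=a\in k$. Thus $\bar{k}$ is the perfect closure of $k$, and $\operatorname{Spec}(\bar{k})\to\operatorname{Spec}(k)$ is integral, radicial, and surjective, hence a universal homeomorphism.

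Because universal homeomorphisms are stable under base change, the projection $Y_{\bar{k}}\to Y$ is a universal homeomorphism for every $k$-scheme $Y$, and in particular a homeomorphism of underlying topological spaces. Since a homeomorphism carries irreducible spaces to irreducible spaces, the irreducibility of $Y$ forces $Y_{\bar{k}}$ to be irreducible, i.e. $Y$ is geometrically irreducible. Applying this with $Y$ equal to a $k$-stratum of $X$ completes the argument.

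I expect no serious obstacle here: the entire content is the reduction to the purely inseparable base change $\bar{k}/k$, after which everything is formal. Should one prefer a self-contained alternative that avoids universal homeomorphisms, one can instead invoke the criterion that an irreducible $Y$ is geometrically irreducible exactly when $k$ is separably algebraically closed in the function field $k(Y)$; this holds automatically in our setting, since a separably closed $k$ admits no nontrivial separable subextension inside any field extension.
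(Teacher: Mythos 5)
Your proof is correct, but it takes a genuinely different route from the paper's. The paper argues via rational points: using a local chart $U\rightarrow \text{Spec}(k[x_1,\ldots,x_n]/(x_1\cdots x_{r+1}))$ at a closed point of an $r$-stratum, it produces a $k$-rational point in the smooth locus of that stratum (here separable closedness is used to ensure preimages in the \'{e}tale cover $U$ are again $k$-rational), and then invokes the criterion of [Stacks, Tag 04QM] that an irreducible scheme with a smooth rational point is geometrically irreducible. You instead observe that nothing about the semistable structure is needed: each stratum is by definition an irreducible $k$-scheme, and over a separably closed field every irreducible scheme is geometrically irreducible, because $\overline{k}/k$ is purely inseparable, so $\text{Spec}(\overline{k})\rightarrow\text{Spec}(k)$ is a universal homeomorphism and $Y_{\overline{k}}\rightarrow Y$ is a homeomorphism. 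This is more elementary and more general (no finite-type or smoothness input), and it reduces the lemma to a one-line topological fact; the only step left implicit is the standard equivalence that irreducibility of $Y_{\overline{k}}$ suffices for geometric irreducibility over arbitrary extensions, which is [Stacks, Tag 0364]. What the paper's longer argument buys is the existence of smooth $k$-rational points on each stratum, a byproduct that is reused later (e.g.\ in Lemma \ref{smnonempty}, where sections through the smooth locus are lifted), but for the statement as literally given your argument is complete.
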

\begin{proof}
It suffices by [\cite{stacks}, Tag 04QM] to show that each irreducible component of $\text{Sing}_f^k(X)$ admits a rational point in its smooth locus.

If $X = \text{Spec}(k[x_1, \ldots, x_n] / (x_1\cdots x_l))$, then the  irreducible components of $\text{Sing}_f^r(X_s)$ are cut out by ideals of the form $(x_{i_1}, \ldots, x_{i_{r+1}})\subset \{x_1, \ldots, x_l\}$, and it is clear that rational points exist in the smooth locus of any $r$-strata. 

In the general case, any closed point $x$ of an element of the $r$-strata admits a local chart $U\rightarrow \text{Spec}(k[x_1, \ldots, x_n] / (x_1\cdots x_{r+1}))$. The image of $U$ is open and contains the point $(x_1, \ldots, x_{r+1})$, hence we can find a $k$-rational point $p$ in the image of $U$ whose ideal contains $(x_1, \ldots, x_{r+1})$ and is contained in an element of the $r$-strata.

As $k$ is separably closed, any pre-image of $p$ in $U$ is $k$-rational and smooth over $S$, whence its image in $X$ is also $k$-rational and smooth over $S$, necessarily in the same element of the $r$-strata of $x$ by construction. Hence we conclude that the element of the $r$-strata containing $x$ is geometrically irreducible.
\end{proof}

\subsection{Graphs and alignment}
Graphs will play a key role in the notion of alignment of a semistable morphism. We recall some basic definitions.
\begin{definition}
A \textit{graph} is an ordered triple $\Gamma = (V,E, \text{Incidence})$ where $V$ is a finite set (called the \textit{vertices} of the graph $\Gamma$), $E$ the (finite) set of \textit{edges}, and $\text{Incidence}:E \rightarrow (V\times V)/S_2$. Intuitively, the incidence relations tell us which vertices an edge connects. A graph $\Gamma$ is \textit{connected} if there is a path between any two vertices of $\Gamma$. A \textit{cycle} of $\Gamma$ is a path whose first and last vertex coincide and for which no other vertices or edges are repeated.
\end{definition}

\begin{definition}
Let $X/S$ be a semistable morphism, and let $X_s$ be the fibre of $X$ over a point $s\in S$. The \textit{dual graph} $\Gamma_{s}$ of  $X_{s}$ is the graph $\Gamma_{s}$ defined as follows: For every irreducible component $X_i$ of $X_{s}$ we have one vertex $v_i$. For every $1$-strata $C_e$ contained exclusively within components $X_i$ and $X_j$ (where possibly $X_i=X_j$) we have one edge between vertex $v_i$ and $v_j$.
\end{definition}

\begin{remark}
In general the irreducible components and the $1$-strata of a fibre are not geometrically irreducible, and so given a geometric point $\overline{s}\rightarrow S$ with image $s$ it is not in general true that $\Gamma_s$ will be isomorphic to $\Gamma_{\overline{s}}$, where $\Gamma_{\overline{s}}$ is the dual graph of the geometric fibre $X_{\overline{s}}$. 

However, by \hyperref[compsareirr]{Lemma \ref*{compsareirr}}, the dual graph $\Gamma_{\overline{s}}$ is equivalent to the dual graph defined using elements of the $0$- and $1$- strata of $X_{k_s^{\text{sep}}}$, where $k_s^{\text{sep}}$ is the separable closure of $k(s)$ within $k(\overline{s})$.
\end{remark}

\begin{remark}
This generalises the dual graph of a semistable curve as defined in \cite{holmes}. Namely, in the case of a semistable curve, each irreducible component has an associated vertex, and for each point of intersection between irreducible components $C_i$ and $C_j$ one has an edge between the associated vertices. The set of points of intersection in this case constitute the $1$-strata.
\end{remark}

\subsection{Alignment of a semistable morphism}
Having defined the strata of any fibre $X_s$ of a semistable morphism $f:X\rightarrow S$, we shall attach labels to the strata in such a way that the labels are constant on any connected component of $\text{Sing}_f(X_s)$. These labels are the analogues of those in Definition 2.11 of \cite{holmes}. 

\begin{definition}\label{edgelabelling}
Let $x\in X_s$ be in the non-smooth locus. Let \[g: U \rightarrow \text{Spec}(R[x_1, \ldots x_n]/(x_1\cdots x_l -b)\] be a local chart of $x$. We define the label of $x$ to be $l(x) = (b)$, viewed as a principal ideal of $\mathcal{O}_{S,s}^{\text{sh}}$.
\end{definition}

\begin{remark}
By \hyperref[lilabellemma]{Lemma \ref*{lilabellemma}} this definition is independent of the local chart we choose.
\end{remark}

\begin{lemma}[\cite{li}, Lemma 3.2 and Lemma 3.4]
The label $l(x)$ as in the above definition is constant on connected components of the non-smooth locus of $X_s$. 
\end{lemma}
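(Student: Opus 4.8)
The plan is to show that the label, viewed as an ideal of $\mathcal{O}_{S,s}^{\text{sh}}$, is locally constant on the non-smooth locus of $X_s$; since a locally constant function is constant on each connected component, this will suffice. Well-definedness of the label at each individual point is already furnished by \hyperref[lilabellemma]{Lemma \ref*{lilabellemma}}, so the genuine content lies in the local constancy, and for that it is enough to work inside a single local chart.

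To set this up I would fix a point $x$ in the non-smooth locus together with a local chart $g: U \to V := \text{Spec}(R[x_1,\ldots,x_n]/(x_1\cdots x_l - b))$, with $b\in R$. Since $g$ and the structure map $U\to X$ are \'{e}tale, hence open, and \'{e}tale morphisms preserve smoothness in both directions, the non-smooth locus of $U_s$ is the preimage of that of $X_s$ and maps onto an open neighbourhood of $x$; via the other leg it maps to the non-smooth locus of $V_s$. Because the label is intrinsic to a point, it therefore suffices to prove that every point of the non-smooth locus of $V_s$ carries the label $(b)$. So let $p''$ be such a point. Singularity of the fibre at $p''$ forces at least two of $x_1,\ldots,x_l$ to lie in the prime $p''$; after relabelling, say $x_1,\ldots,x_m$ lie in $p''$ (with $m\geq 2$) while $x_{m+1},\ldots,x_l$ do not, so that the latter are units in $\mathcal{O}_{V,p''}$.

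The key computation is to exhibit a local chart at $p''$ whose parameter is exactly $b$. Inverting $x_{m+1},\ldots,x_l$ produces an open, hence \'{e}tale, subscheme $\text{Spec}(A)\subseteq V$ containing $p''$, where $A = R[x_1,\ldots,x_n, x_{m+1}^{-1},\ldots,x_l^{-1}]/(x_1\cdots x_l - b)$. Writing $v := x_{m+1}\cdots x_l$, a unit of $A$, the defining relation becomes $x_1\cdots x_m\cdot v = b$. Substituting $z_1 := x_1 v$, which is an isomorphism because $v$ is invertible, one computes $x_1\cdots x_l = z_1 x_2\cdots x_m$, so $A$ is identified with the localisation of $R[z_1,x_2,\ldots,x_n]/(z_1 x_2\cdots x_m - b)$ at $x_{m+1},\ldots,x_l$. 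This is a local chart at $p''$ with $m$ coordinates and parameter $b\in R$, so by \hyperref[lilabellemma]{Lemma \ref*{lilabellemma}} the label at $p''$ equals $(b)$, the same as at $x$.

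The main obstacle is precisely this coordinate change, and specifically the fact that the unit $v$ is a product of \emph{fibre} coordinates rather than of elements of $R$: one cannot simply divide $b$ by $v$ and hope for a parameter in $R$, nor is it \emph{a priori} clear that $v$ descends to a unit of $\mathcal{O}_{S,s}^{\text{sh}}$. Absorbing $v$ into the single coordinate $z_1$ circumvents this entirely, delivering the parameter $b$ on the nose. With local constancy established, the conclusion is formal: the label defines a locally constant $\mathcal{O}_{S,s}^{\text{sh}}$-ideal-valued function on the non-smooth locus of $X_s$, which is therefore constant on each connected component.
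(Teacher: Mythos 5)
Your argument is correct. Note that the paper itself offers no proof of this lemma: it is stated with a citation to Lemmas 3.2 and 3.4 of \cite{li}, so there is no in-text argument to compare against; what you have done is supply a self-contained verification of the cited fact. The structure is sound: local constancy reduces to a single chart $V=\operatorname{Spec}(R[x_1,\ldots,x_n]/(x_1\cdots x_l-b))$ because the non-smooth locus of $U_s$ is the full preimage of that of $X_s$ and maps onto an open subset of it, and the heart of the matter is exactly the coordinate change you perform. At a non-smooth point $p''$ of the fibre of $V$ only $x_1,\ldots,x_m$ (with $m\geq 2$) lie in $p''$, and the substitution $z_1:=x_1\cdot(x_{m+1}\cdots x_l)$ correctly turns the localised relation into $z_1x_2\cdots x_m=b$, producing a bona fide local chart at $p''$ (the coordinates $z_1,x_2,\ldots,x_m$ all lie in $p''$, and the localisation is an open immersion, hence \'{e}tale), so \hyperref[lilabellemma]{Lemma \ref*{lilabellemma}} pins the label at $p''$ to $(b)$. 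You also correctly identify the trap: one cannot divide $b$ by the unit $v=x_{m+1}\cdots x_l$, since $v$ does not live in $R$; absorbing it into a fibre coordinate is the right move. The only imprecision is notational: the fibre of $V$ you should be working with is the one over the point $s'$ of $\operatorname{Spec}(R)$ lying over $s$ (shrinking $\operatorname{Spec}(R)$ if necessary so that $s'$ is the unique such point), not literally $V_s$; this does not affect the argument.
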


\begin{remark}
While the label $l(x)$ is constant on connected components of $\text{Sing}_f(X_s)$, the same is not true of the integer $l$ in \hyperref[lilabellemma]{Lemma \ref*{lilabellemma}}. For $x \in \text{Sing}_f^k(X_s) \setminus \text{Sing}^{k+1}_f(X_s)$ we will have $l=k+1$, and hence $l$ will in general vary on a given connected component $C$ of $\text{Sing}_f(X_s)$.
\end{remark}

\begin{definition}
Let $Y$ be a $k$-strata of a fibre $X_s$, $k\geq 1$, of a semistable morphism $X/S$. The \textit{label of $Y$} is defined to be the label $l(x)$ of any point $x\in Y$. 
\end{definition}

We now define a labelling of the dual graphs of geometric fibres of $X/S$. Using these labels we can attach labels to edges of the dual graph $\Gamma_{\overline{s}}$ of $X_{\overline{s}}$, where $\overline{s}\rightarrow s$ is a geometric point.

\begin{definition}
Let $\Gamma$ be a graph with vertex set $V$ and edge set $E$. Given a monoid $M$, an \textit{edge-labelling} of $\Gamma$ is a function $f: E \rightarrow M$. We can similarly define a \textit{vertex labelling} as a function $g: V \rightarrow M$. 

Let $C$ be a cycle of a graph $\Gamma$, with an edge-labelling $f: E \rightarrow M$. We say that $C$ is \textit{aligned} with respect to the edge-labelling if for every pair of edges $e_1$ and $e_2$ in $C$ there exist positive integers $n_1$ and $n_2$ such that $f(e_1)^{n_1} = f(e_2)^{n_2}$. We say that an edge-labelled graph $(\Gamma, f)$ is aligned if every circuit of $\Gamma$ is aligned with respect to $f$.
\end{definition}

Recall from \hyperref[compsareirr]{Lemma \ref*{compsareirr}} that the $0$- and $1$- strata of $X_{\overline{s}}$ are in bijection with the $0$- and $1$-strata of the special fibre of $X\times_S \mathcal{O}_{S, \overline{s}}$ over $\mathcal{O}_{S, \overline{s}} = \mathcal{O}_{S,s}^{\text{sh}}$ for any geometric point $\overline{s}\rightarrow S$, with the closed point of the latter corresponding to the separable closure of $k(s)$ in $k(\overline{s})$.
\begin{definition}
\label{defofalignment}
Let $X/S$ be a semistable morphism and let $\overline{s}\rightarrow s$ be a geometric point of $S$. Let $\Gamma_{\overline{s}}$ be the dual graph of $X_{\overline{s}}$, and let $M_s$ be the monoid of principal ideals of the ring $\mathcal{O}_{S,\overline{s}}$. We define a labelling of the edges of $\Gamma_{\overline{s}}$ by sending an edge $e$ to the label of the $1$-strata of the closed fibre of $X\times_S \mathcal{O}_{S, \overline{s}}$ over $\mathcal{O}_{S,\overline{s}}$ that $e$ corresponds to.

We say that \textit{$X/S$ is aligned at $\overline{s}$} if the dual graph $\Gamma_{\overline{s}}$ is aligned with respect to the above labelling. We say that $X/S$ is \textit{aligned} if it is aligned at all geometric points of $S$.
\end{definition}

\section{Behaviour of strata under specialisation  and generalisation}\label{sec:three}

In this section we will study how the dual graph $\Gamma_{\overline{s}}$ of a geometric fibre $X_{\overline{s}}$ of a semistable morphism is related to the dual graph of $\Gamma_{\eta}$, where $\eta$ is a generalisation of $s$. The main result is that we will obtain a map $\text{sp}: \Gamma_s \rightarrow \Gamma_{\overline{\eta}}$.

\subsection{Specialisation map on vertices}

\hyperref[compsclosedfromgeneric]{Lemma \ref*{compsclosedfromgeneric}} and \hyperref[nosharedirred]{Lemma \ref*{nosharedirred}} hold in more general situations than the case of semistable morphisms when the base scheme is integral.

\begin{lemma}\label{compsclosedfromgeneric}
Let $f: X\rightarrow S$ be a proper, flat morphism with reduced fibres to a locally Noetherian integral scheme $S$ with generic point $\eta$. Assume that the fibres of $f$ are pure dimensional. Let $Y\subset X_{\eta}$ be an irreducible component of $X_{\eta}$ and let $\overline{Y}$ denote the scheme-theoretic closure of $Y$ in $X$. Then for every point $s\in S$, $\overline{Y}_s$ is a union of irreducible components of $X_s$. Moreover, for every irreducible component $Z$ of $X_s$ there exists an irreducible component $Y$ of $X_{\eta}$ such that $Z$ is contained in $\overline{Y}_s$.
\end{lemma}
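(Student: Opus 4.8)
The plan is to compare fibre dimensions. Write $d = \dim Y = \dim X_\eta$ (the latter equality since $X_\eta$ is pure-dimensional), and let $\xi$ denote the generic point of $Y$. As $Y$ is an irreducible component of the reduced scheme $X_\eta$ it is integral, so its scheme-theoretic closure $\overline{Y}$ is an integral closed subscheme of $X$ with generic point $\xi$. Since $\overline{Y}\subseteq X$ is closed and $X\to S$ is proper, the composite $g:\overline{Y}\to S$ is proper; its image is therefore closed and contains $\eta$, hence equals $S=\overline{\{\eta\}}$, so $g$ is surjective and in particular $\overline{Y}_s\neq\emptyset$ for every $s$.

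For the first assertion I would bound $\dim_x\overline{Y}_s$ from both sides. For the upper bound, note that $f$ is flat and locally of finite presentation (being finite type over the locally Noetherian $S$), so the relative dimension $x\mapsto\dim_x X_{f(x)}$ is locally constant on $X$. As $\overline{Y}$ is connected this function is constant on it, equal to its value $\dim_\xi X_\eta = d$ at $\xi$; hence $\dim_x X_{f(x)} = d$ for every $x\in\overline{Y}$. Since $\overline{Y}_s\subseteq X_s$ is closed, $\dim_x\overline{Y}_s\leq\dim_x X_s = d$ for $x\in\overline{Y}_s$; this also forces $\dim X_s = d$, using pure-dimensionality of $X_s$. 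For the lower bound I would apply upper semicontinuity of fibre dimension to the finite-type morphism $g$: the locus $\{x : \dim_x\overline{Y}_{g(x)}\geq d\}$ is closed and contains $\xi$ (where the generic fibre $\overline{Y}_\eta=Y$ has dimension $d$), hence is all of $\overline{Y}$. Combining the bounds gives $\dim_x\overline{Y}_s = d$ at every $x\in\overline{Y}_s$. Taking $x$ to be the generic point of an irreducible component $W$ of $\overline{Y}_s$ shows $\dim W = d$; since $X_s$ is pure of dimension $d$, $W$ lies in some component $Z$ of $X_s$ with $\dim Z = d$, and an inclusion of irreducible closed sets of equal finite dimension is an equality, so $W = Z$. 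Thus $\overline{Y}_s$ is a union of irreducible components of $X_s$.

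For the \emph{moreover} assertion I would first show $X = \bigcup_Y \overline{Y}$, the union over all irreducible components $Y$ of $X_\eta$. Flatness of $f$ gives going-down, so every point of $X$ has a generalisation over $\eta$; in particular each generic point of $X$ maps to $\eta$ and is therefore a generic point of $X_\eta$, i.e.\ the generic point of some $Y$. As $X$ is the union of the closures of its generic points, it is covered by the corresponding $\overline{Y}$, whence $X = \bigcup_Y\overline{Y}$ (a finite union, since $X_\eta$ is Noetherian). Intersecting with the fibre gives $X_s = \bigcup_Y\overline{Y}_s$. If $Z$ is any irreducible component of $X_s$, then $Z$ is irreducible and covered by the finitely many closed sets $\overline{Y}_s$, so $Z\subseteq\overline{Y}_s$ for at least one $Y$, as required.

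The step I expect to be the main obstacle is the two-sided control of $\dim_x\overline{Y}_s$: the upper bound is where flatness is genuinely used, through local constancy of the relative dimension together with pure-dimensionality of the fibres, while the lower bound needs the semicontinuity estimate for $g:\overline{Y}\to S$. One must be careful throughout to work with the local fibre dimensions $\dim_x(-)$ at the relevant generic points rather than with global dimensions, so that the identification $W = Z$ of components is legitimate.
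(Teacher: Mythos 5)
Your argument is correct and follows essentially the same route as the paper: a two-sided comparison of fibre dimensions showing every component of $\overline{Y}_s$ has dimension $\dim X_s$ (you use local constancy of relative dimension for the flat $f$ plus Chevalley semicontinuity for $\overline{Y}\to S$, where the paper cites EGA IV 13.1.5 and 13.1.1), followed by the observation that $X=\bigcup_Y\overline{Y}$ (you via generalisation lifting along the flat $f$, the paper via openness of flat maps — two forms of the same fact). No gaps.
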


\begin{proof}
We will show the dimension of each irreducible component of $\overline{Y}_s$ is equal to the dimension of $X_s$ and hence that each irreducible component of $\overline{Y}_s$ is an irreducible component of $X_s$. To achieve this, we will show
\[\text{dim}(X_s) \geq \text{dim}(\overline{Y}_s) \geq \text{dim}(Y) = \text{dim}(X_{\eta}) = \text{dim}(X_s).\]

As $\overline{Y}_s\subset X_s$, we have $\text{dim}(X_s) \geq \text{dim}(\overline{Y}_s)$. By 13.1.5 of \cite{egaiv_iii}, fibre dimension is upper semicontinuous for proper morphisms. Hence $\text{dim} (\overline{Y}_s)\geq \text{dim}(Y)$. Furthermore, $Y$ is an irreducible component of $X_{\eta}$ and so $\text{dim}(Y) = \text{dim}(X_{\eta})$ by  the assumption the fibres have pure dimension. By flatness $\text{dim}(X_{\eta}) = \text{dim}(X_s)$. Thus $\text{dim}(\overline{Y}_s) = \text{dim}(X_s)$.

By Lemma 13.1.1 of \cite{egaiv_iii}, a dominant morphism locally of finite type between irreducible schemes is such that the dimensions of the irreducible components of fibres is greater than or equal to the dimension of the generic fibre. Because $\text{dim}(\overline{Y}_s) = \text{dim}(Y)$, we conclude that  $\overline{Y}_s$ is equidimensional. In particular, the underlying topological space of $\overline{Y}_s$ is a union of irreducible components of $X_s$, and as $X_s$ is reduced we find $\overline{Y}_s$ is a union of irreducible components of $X_s$.

As $f$ is flat and locally of finite presentation, the image of any non-empty open set in $X$ is open in $S$, and so contains $\eta$. Thus $\overline{X}_{\eta} = X$, implying the second part of the lemma.
\end{proof}


\begin{lemma}\label{nosharedirred}
Let $f: X\rightarrow S$ be as in \hyperref[compsclosedfromgeneric]{Lemma \ref*{compsclosedfromgeneric}}. Suppose in addition that the smooth locus of each fibre is dense in the fibre. If $\eta_1 \neq \eta_2$ are generic points of distinct irreducible components of $X_{\eta}$, then $(\overline{\eta_1})_s$ and $(\overline{\eta_2})_s$ share no irreducible component of $X_s$ in common.
\end{lemma}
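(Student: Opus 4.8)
The plan is to argue by contradiction and reduce the statement to a local question about the branches of $X$ at a single point. Suppose that some irreducible component $Z$ of $X_s$ is contained in both $(\overline{\eta_1})_s$ and $(\overline{\eta_2})_s$, and let $z$ be the generic point of $Z$. By \hyperref[compsclosedfromgeneric]{Lemma \ref*{compsclosedfromgeneric}} the closed subsets $(\overline{\eta_i})_s$ are unions of irreducible components of $X_s$, so from $Z \subseteq (\overline{\eta_1})_s \cap (\overline{\eta_2})_s$ we get $z \in \overline{\eta_1} \cap \overline{\eta_2}$. First I would observe that $\eta_1$ and $\eta_2$ are in fact generic points of $X$ itself: since $f$ is flat it satisfies going-down, so every generic point of $X$ maps to the generic point $\eta$ of $S$, whence the generic points of $X$ are exactly the generic points of the generic fibre $X_{\eta}$. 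Thus $\overline{\eta_1}$ and $\overline{\eta_2}$ are two \emph{distinct} irreducible components of $X$ both passing through $z$, which is to say that the local ring $\ca{O}_{X,z}$ has at least two distinct minimal primes. The whole statement therefore reduces to showing that $z$ lies on a unique irreducible component of $X$, i.e.\ that $\ca{O}_{X,z}$ has a unique minimal prime.

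The next step is to exploit the density hypothesis. Since the smooth locus of $X_s$ is dense and open, it contains the generic point of every irreducible component of $X_s$; in particular $z$ is a smooth point of $X_s \to \on{Spec} k(s)$. As $z$ is a generic point of the reduced scheme $X_s$, the local ring $\ca{O}_{X_s,z}$ is a field, equal to the residue field $k(z)$. Because $f$ is flat and (being finite type over a locally Noetherian scheme) locally of finite presentation, and its fibre is smooth at $z$, the morphism $f$ is smooth at $z$. This suggests the correct way to package the result: one shows that the locus along which two distinct components of $X$ meet is contained in the non-smooth locus of $f$, and the latter meets each fibre $X_s$ only in the nowhere-dense non-smooth locus of $X_s$, so it cannot contain a whole component $Z$.

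It remains to prove that smoothness of $f$ at $z$ forces $\ca{O}_{X,z}$ to be irreducible, and this is the step I expect to be the main obstacle. Writing $A = \ca{O}_{S,s}$, which is a domain since $S$ is integral, flatness makes $\ca{O}_{X,z}$ torsion-free over $A$, so it injects into its generic fibre $\ca{O}_{X,z}\otimes_A \on{Frac}(A)$; the latter is smooth over the field $\on{Frac}(A)$, hence regular and in particular reduced, so $\ca{O}_{X,z}$ is reduced and its minimal primes correspond to those of the generic fibre. The remaining point is that there is only one such minimal prime, i.e.\ that $X$ is unibranch at $z$. The closed fibre of $\on{Spec}\ca{O}_{X,z} \to \on{Spec} A$ is the single point $\on{Spec} k(z)$, so the two putative branches could be separated only over the generic point of $S$, and smoothness of $f$ at $z$ ought to rule this out. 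Concretely, I would deduce that $\ca{O}_{X,z}$ is geometrically unibranch by using that this property ascends along smooth morphisms from a geometrically unibranch base, which applies since the base is normal in the regular setting of the main theorems; a geometrically unibranch local ring has a unique minimal prime, contradicting the two found above. The delicate part is precisely this control of the branches of $X$ at a fibrewise-smooth point: unlike reducedness or normality, irreducibility of local rings does not by itself ascend along smooth morphisms without a hypothesis such as geometric unibranchness on $S$, so this is where the argument must be carried out with care.
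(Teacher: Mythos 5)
Your proposal is correct and follows essentially the same route as the paper's: both arguments pick a point of the putative shared component $Z$ at which the fibre is smooth (guaranteed by the density hypothesis) and derive a contradiction from the fact that its local ring in $X$ would then have two minimal primes, one for each of $\overline{\eta_1}$ and $\overline{\eta_2}$. The only difference is in how the contradiction is closed: the paper says such a point cannot be a regular point of $X$ (implicitly using that smooth over a regular base is regular, hence that the local ring is a domain), while you invoke ascent of geometric unibranchness along smooth morphisms; you are right that either way some hypothesis on $S$ beyond integrality is being used at this step, and your explicit handling of it is, if anything, more careful than the paper's one-line version.
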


\begin{proof}
Suppose that an irreducible component $Z$ of $X_s$ is contained in $\overline{\eta_1}$ and $\overline{\eta_2}$. Then the local ring $\mathcal{O}_{X,x}$ in $X$ of every smooth point $x\in Z$ over $s$ contains at least two minimal prime ideals, corresponding to the intersection of irreducible components of $X$. Hence $x$ is not a regular point as $\mathcal{O}_{X,x}$ is not an integral domain. This contradicts the assumption that the smooth locus is dense in the fibres. 
\end{proof}

We now focus on the situation where $f:X\rightarrow S$ is a semistable morphism, with $S$ the spectrum of a Noetherian strictly henselian local ring. The closed point of $S$ will be denoted by $s$ and we shall use $\eta$ to denote a generalisation of $s$, which is no longer required to be the generic point of $S$.

\begin{lemma}\label{smnonempty}
Let $Y$ be an irreducible component of $X_{\eta}$. Then $Y$ is geometrically irreducible.
\end{lemma}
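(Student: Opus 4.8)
The plan is to exploit the strictly henselian hypothesis on the base together with the étale-local structure theory for semistable morphisms. The statement is that for $S = \on{Spec}$ of a Noetherian strictly henselian local ring with closed point $s$, and $\eta$ a generalisation of $s$, each irreducible component $Y$ of the fibre $X_\eta$ is geometrically irreducible. By \hyperref[compsareirr]{Lemma \ref*{compsareirr}} we already know the statement for the special fibre, since $X\to S$ restricts to a semistable morphism over the separably closed residue field $k(s)$. The key idea is to transport geometric irreducibility from the closed fibre to the fibre over $\eta$ using \hyperref[compsclosedfromgeneric]{Lemma \ref*{compsclosedfromgeneric}} applied to the integral scheme $\overline{\{\eta\}}$ with its induced reduced structure, over which $s$ is the closed point.

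First I would reduce to checking that $Y$ remains irreducible after base change to a separable closure of $k(\eta)$; equivalently, following [\cite{stacks}, Tag 04KU], it suffices to show $Y$ has a $k(\eta)^{\on{sep}}$-rational point in its smooth locus whose specialisation meets a prescribed component of the closed fibre, or more directly that the function field $k(Y)$ is separably closed in some sense controlled by the chart. Concretely, I would pick a closed point $x$ in the smooth locus of $Y$ and take a local chart $g: U\to \on{Spec}(R[x_1,\ldots,x_n]/(x_1\cdots x_l - b))$ of $x$ as in (\ref{semistablediagram}), where $R$ is étale over the strictly henselian ring defining $S$. Since $R$ is a filtered colimit of étale algebras over a strictly henselian ring, $R$ is again a product of strictly henselian local rings, so after localising we may assume the chart ring is strictly henselian with separably closed residue field at $s$.

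The crux of the argument is then to understand the components of the fibre over $\eta$ of the model $x_1\cdots x_l = b$ directly. Over the generalisation $\eta$, the element $b$ maps into $\ca{O}_{S,s}^{\on{sh}}$ and its image in $k(\eta)$ is either zero or a unit. When $b$ is a unit at $\eta$ the chart is smooth over $\eta$ and irreducibility is automatic; when $b=0$ the equation $x_1\cdots x_l = 0$ has components cut out by the coordinate hyperplanes, and I would argue as in \hyperref[compsareirr]{Lemma \ref*{compsareirr}} that, because the residue field at the relevant point is separably closed and the coordinate functions are defined over the base, each such component contains a rational point in its smooth locus and hence is geometrically irreducible. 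The passage between these cases, and the bookkeeping to ensure that an arbitrary component $Y$ of $X_\eta$ is dominated by exactly one such coordinate component, is handled by combining \hyperref[compsclosedfromgeneric]{Lemma \ref*{compsclosedfromgeneric}} and \hyperref[nosharedirred]{Lemma \ref*{nosharedirred}}: these give a bijection between the components of $X_s$ and $X_\eta$ via scheme-theoretic closure, so geometric irreducibility of the components of $X_s$ forces the same for their generisations.

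The main obstacle I anticipate is that geometric irreducibility is not in general preserved under generalisation merely because the residue field $k(s)$ is separably closed — the residue field $k(\eta)$ need \emph{not} be separably closed, and a priori a component could acquire extra geometric components over $k(\eta)^{\on{sep}}$ that collapse only in the special fibre. Overcoming this requires the étale-local rigidity of the chart: the number of branches through a point is read off from the combinatorial datum $l$ of Li (\hyperref[lilabellemma]{Lemma \ref*{lilabellemma}}), which is insensitive to base change, so the splitting behaviour of $Y\otimes_{k(\eta)}k(\eta)^{\on{sep}}$ is already governed by data defined over the strictly henselian base. I would therefore spend most of the effort making precise that a component staying irreducible over $k(\eta)$ cannot split over $k(\eta)^{\on{sep}}$, using that any such splitting would have to be compatible with specialisation to the already-geometrically-irreducible closed fibre, contradicting \hyperref[nosharedirred]{Lemma \ref*{nosharedirred}}.
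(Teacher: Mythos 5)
Your proposal assembles the right supporting lemmas but omits the step that actually makes the argument close, and the two mechanisms you substitute for it are both flawed. First, \hyperref[compsclosedfromgeneric]{Lemma \ref*{compsclosedfromgeneric}} and \hyperref[nosharedirred]{Lemma \ref*{nosharedirred}} do not give a bijection between the irreducible components of $X_s$ and those of $X_{\eta}$: they give a well-defined surjective specialisation map from components of $X_s$ to components of $X_{\eta}$, and this map is in general far from injective (\hyperref[componentscollide]{Lemma \ref*{componentscollide}} is precisely about distinct components of $X_s$ colliding over $\eta$). More importantly, even granting such a correspondence, geometric irreducibility of the components of the special fibre does not formally propagate to their generisations; the criterion of [\cite{stacks}, Tag 04QM] requires a $k(\eta)$-rational point in the smooth locus of $Y$, and note that the $k(\eta)^{\mathrm{sep}}$-rational point you propose to produce is useless here, since such points always exist. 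Second, your fallback — that the integer $l$ of \hyperref[lilabellemma]{Lemma \ref*{lilabellemma}} controls the splitting of $Y\otimes_{k(\eta)}k(\eta)^{\mathrm{sep}}$ — does not work: $l$ counts branches at a point of the non-smooth locus and is blind to whether a single component of $X_{\eta}$ breaks into Galois-conjugate components over the separable closure; such a splitting is perfectly compatible with the charts having semistable form, and \hyperref[nosharedirred]{Lemma \ref*{nosharedirred}} compares components of $X_{\eta}$, not of $X_{\overline{\eta}}$, so it yields no contradiction.

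The missing idea is henselian lifting of sections. The paper replaces $S$ by $\overline{\{\eta\}}$ with its reduced structure (still the spectrum of a strictly henselian local ring with separably closed residue field $k(s)$), and sets $U = X\setminus\bigcup_{Y'\neq Y}\overline{Y'}$; by \hyperref[nosharedirred]{Lemma \ref*{nosharedirred}} the special fibre $U_s$ is dense in $\overline{Y}_s$, and by Remark \ref{smoothlocusisdense} it meets the smooth locus of $X/S$ in a dense open set. Since $k(s)$ is separably closed, that smooth open set has a $k(s)$-rational point, and by 6.2.13 of \cite{liu} (Hensel's lemma for smooth morphisms over a henselian local base) this point lifts to a section $\overline{\eta}\rightarrow\overline{Y}$ through the smooth locus. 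Its generic point is a $k(\eta)$-rational point of $Y$ at which $Y/\eta$ is smooth, and Tag 04QM then gives geometric irreducibility. Without this lifting step — which nothing in your sketch supplies — the argument has a genuine gap.
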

\begin{proof}
To show $Y/ \eta$ is geometrically integral it suffices by [\cite{stacks}, Tag 04QM] to show it has a $k$-rational point with $Y/ \eta$ smooth at that point, where $k$ is the residue field of $\eta$. Thus it suffices to show that $\overline{Y}/ \overline{\eta}$ has a section through its smooth locus, where $\overline{Y}$ is the closure of $Y$ in $X$ with the reduced induced structure and $\overline{\eta}$ is the closure of $\eta$ in $S$. 

Define $U = X\setminus \cup_{Y'\neq Y} \overline{Y'}$, where $Y'$ varies over the irreducible components of $X_{\eta}$. Note that $U_s$ is a dense and non-empty open subscheme in each fibre $\overline{Y}_s$ of $\overline{Y}_s$ by \hyperref[nosharedirred]{Lemma \ref*{nosharedirred}}.

Furthermore, the smooth locus of $U/S$ is open and dense in $U$ by Remark \ref{smoothlocusisdense}. Thus $\overline{Y}_s$ admits a non-empty open subscheme of smooth points of $X/S$ that is dense in each irreducible component of $\overline{Y}_s$, and as $X_s$ has sections through the smooth locus of its irreducible components, so too does $\overline{Y}_s$. By 6.2.13 of \cite{liu} this section lifts to a section of $\overline{\eta}$ through the smooth locus of $\overline{Y}$.
\end{proof}

\begin{definition}
The \textit{specialisation map on irreducible components}, $\text{sp}_{s\rightarrow \eta}$, from the irreducible components of $X_s$ to those of $X_{\eta}$ is defined by sending an irreducible component $Z$ of $X_s$ to the unique irreducible component $Y$ of $X_{\eta}$ such that $Z\subset \overline{Y}_s$.
\end{definition}

\begin{remark}\label{transitivityofsp}
The specialisation maps are transitive. If $\eta$ is a generalisation of $s$ and $\zeta$ is a generalisation of $\eta$, then $\text{sp}_{s\rightarrow \zeta} = \text{sp}_{\eta\rightarrow \zeta}\circ \text{sp}_{s\rightarrow \eta}$. This follows by construction: if $Z\subset X_s$, $Y\subset X_{\eta}$, and $W\subset X_{\zeta}$ are irreducible components of their respective fibres with $Y\subset \overline{W}_{\eta}$ and $Z\subset \overline{Y}_s$, then $Z\subset \overline{W}_s$.
\end{remark}

\begin{remark}
\hyperref[smnonempty]{Lemma \ref*{smnonempty}} shows that the irreducible components of $X_{\eta}$ correspond bijectively to those of $X_{\overline{\eta}}$. This will allow us to define a map on vertices of $\Gamma_s$ to those of $\Gamma_{\overline{\eta}}$. To define what the map will be on the edges we must establish some additional results.
\end{remark}

\subsection{The specialisation map on edges}

\begin{lemma}\label{componentscollide}
Suppose that $S = \text{Spec}(R)$ is the spectrum of a Noetherian strictly henselian local ring with closed point $s$, and let $f: X\rightarrow S$ be a semistable morphism. Let $X_1$ and $X_2$ be two irreducible components of $X_s$. Suppose $X_1$ and $X_2$ intersect and contain some element $Z$ of the $k$-strata of $X_s$, $k\geq 1$, such that the label $(b)$ of $Z$ is generated by an element that becomes a unit over $\eta$. Then $\text{sp}_{s\rightarrow \eta}(X_1) = \text{sp}_{s\rightarrow \eta}(X_2)$.
\end{lemma}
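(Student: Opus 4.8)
The plan is to pass to the local model of $f$ near a generic point of $Z$ and to exploit that, because $b$ becomes a unit over $\eta$, the \emph{total space} of the model is irreducible even though its special fibre has several branches meeting along $Z$. First I would reduce the base. Replacing $S$ by the reduced closed subscheme $\overline{\{\eta\}}$ (semistability is preserved under base change, and a quotient of a strictly henselian local ring by a prime is again a strictly henselian local domain), we may assume $S=\text{Spec}(R)$ with $R$ a strictly henselian local domain, $\eta$ its generic point, and the hypothesis that $b$ becomes a unit over $\eta$ becomes $b\neq 0$ in $R$. This restriction changes neither $X_s$, nor the strata, nor $X_1,X_2$, and leaves $\text{sp}_{s\rightarrow\eta}$ unchanged. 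Picking a generic point $x$ of $Z$ and a local chart $g\colon U\to V\defeq\text{Spec}(R[x_1,\ldots,x_n]/(x_1\cdots x_l-b))$ with $U$ connected and étale over both $X$ and $V$ (the étale neighbourhood of $s$ may be taken to be $S$ itself, as $R$ is strictly henselian), the point $x$ lifts to $x'\in U$ over the point $p\in V$ cut out by $(\mathfrak{m}_R,x_1,\ldots,x_l)$, and here $l=k+1\geq 2$. Since $g$ and $U\to X$ induce isomorphisms on strict henselisations, the branches of $X_s$ at $x$ are in bijection with those of $V_s$ at $p$; in particular $X_1$ and $X_2$ correspond to two distinct branches $\{x_i=0\}$ and $\{x_j=0\}$.

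Next I would analyse $V$. In any monomial order the leading coefficient of $g_0\defeq x_1\cdots x_l-b$ is $1$, so $g_0$ is a nonzerodivisor on $(R/I)[x_1,\ldots,x_n]$ for every ideal $I$; the resolution $0\to R[\underline{x}]\xrightarrow{g_0}R[\underline{x}]\to A\to 0$ then yields $\text{Tor}_1^R(A,R/I)=0$ for $A=R[\underline{x}]/(g_0)$, so $A$ is flat over $R$ (and hence $f$ is flat, being étale-locally this model). As $R$ is a domain and $g_0$ is irreducible over $\text{Frac}(R)$ for $b\neq 0$ and $l\geq 2$, the ring $A\otimes_R\text{Frac}(R)$ is a domain, and torsion-freeness then forces $A$ to be a domain; thus $V$ is irreducible with generic point lying in $V_\eta$. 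The essential point is that $V$ is moreover \emph{geometrically unibranch at} $p$: since $R=R^{\mathrm{sh}}$ is a strictly henselian domain it is geometrically unibranch, and the deformation $x_1\cdots x_l=b$ with $b$ a nonzerodivisor connects the branches of $V_s$, so that $\mathcal{O}_{V,p}^{\mathrm{sh}}$ is again a domain. By invariance of strict henselisations under étale morphisms, $\mathcal{O}_{X,x}^{\mathrm{sh}}\cong\mathcal{O}_{V,p}^{\mathrm{sh}}$ is a domain, i.e. $X$ has a \emph{single} branch at $x$.

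Finally I would conclude as follows. Let $W$ be the unique irreducible component of the total space $X$ through $x$. Unibranchness at $x$ means $\mathcal{O}_{X,x}$ has a unique minimal prime, so every generalisation of $x$ lies on $W$; as the generic points of $X_1$ and $X_2$ both generalise $x$, we get $X_1,X_2\subseteq W$. Because $f$ is flat and $S$ is integral, $W$ dominates $S$, so $W=\overline{Y}$ for the irreducible component $Y=W_\eta$ of $X_\eta$. Restricting to $s$ gives $X_1,X_2\subseteq W_s=\overline{Y}_s$, and the uniqueness in \hyperref[compsclosedfromgeneric]{Lemma \ref*{compsclosedfromgeneric}} and \hyperref[nosharedirred]{Lemma \ref*{nosharedirred}} defining the specialisation map forces $\text{sp}_{s\rightarrow\eta}(X_1)=Y=\text{sp}_{s\rightarrow\eta}(X_2)$. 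I expect the main obstacle to be the geometric unibranch claim of the middle paragraph: establishing that $\mathcal{O}_{V,p}^{\mathrm{sh}}$ is a domain over a base that is only assumed strictly henselian (not normal or regular), for which one must argue that $x_1\cdots x_l-b$ remains prime after strict henselisation; everything else is bookkeeping with flatness and the étale descent of branches.
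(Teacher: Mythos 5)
Your reduction to $S=\overline{\{\eta\}}$ integral, the verification that the local model $V=\operatorname{Spec}\bigl(R[x_1,\dots,x_n]/(x_1\cdots x_l-b)\bigr)$ is an irreducible flat $R$-scheme, and the final bookkeeping (a unique irreducible component $W$ of the total space through $x$ forces $X_1,X_2\subseteq W=\overline{Y}$ and hence $\operatorname{sp}(X_1)=\operatorname{sp}(X_2)$) are all sound. But the argument stands or falls on the claim you yourself flag as the main obstacle: that $\mathcal{O}_{V,p}^{\mathrm{sh}}$ is a domain, i.e.\ that $V$ is geometrically unibranch at $p=(\mathfrak{m}_R,x_1,\dots,x_l)$. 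The sentence ``the deformation $x_1\cdots x_l=b$ with $b$ a nonzerodivisor connects the branches of $V_s$'' is a heuristic, not a proof, and this is precisely where all the content of the lemma lives. The difficulty is genuine: $R$ is only assumed Noetherian strictly henselian, so you cannot pass to the completion (for non-excellent $R$ the completion need not be a domain, and minimal primes of $\widehat{A}$ only surject onto those of $A^{\mathrm{h}}$), and you cannot apply the connectedness-of-fibres results to $\operatorname{Spec}(\mathcal{O}_{V,p}^{\mathrm{sh}})\to S$ because those require finite presentation, which the strict henselisation does not have. Knowing that $\mathcal{O}_{V,p}$ is a domain (which you do prove) does not give unibranchness; that requires controlling the integral closure of $\mathcal{O}_{V,p}$, equivalently the minimal primes of $\mathcal{O}_{V,p}^{\mathrm{sh}}$, and nothing in your write-up does this.

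The paper circumvents exactly this issue by a different route: it first reduces, via a maximal chain of primes and transitivity of $\operatorname{sp}$ followed by Krull--Akizuki, to the case where $S$ is the spectrum of a discrete valuation ring, and then works not with a henselised local ring but with the finitely presented open subscheme $g(U)\subseteq X$ given by the image of the chart. Over a henselian DVR, $g(U)$ is connected, flat, of finite presentation, with reduced special fibre, so its generic fibre is connected by [\cite{stacks}, Tag 055C]; since $b$ is a unit over $\eta$ that generic fibre is smooth, hence irreducible, which forces the generic points of $Y_1$ and $Y_2$ to coincide. If you want to keep your ``unibranch total space'' strategy you must supply an actual proof that $x_1\cdots x_l-b$ remains prime in $\mathcal{O}_{V,p}^{\mathrm{sh}}$ over an arbitrary Noetherian strictly henselian local domain; reducing to a DVR first, as the paper does, is the natural way to make such an argument tractable.
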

\begin{proof}
Fix a chain of prime ideals $m_s \supset p_1 \supset \ldots \supset p_i \supset p_{i+1} \supset m_{\eta}$ of maximum length in $R$, where $m_s$ and $m_{\eta}$ correspond to the points $s$ and $\eta$ respectively. Let $j$ be such that $b\in p_j$ but $b\not\in p_{j+1}$. It suffices to show that $\text{sp}_{s\rightarrow p_{j+1}}(X_1) = \text{sp}_{s\rightarrow p_{j+1}}(X_2)$, for when this holds Remark \ref{transitivityofsp} implies that $\text{sp}_{s\rightarrow \eta}(X_1) = \text{sp}_{s\rightarrow \eta}(X_2)$ also holds.

The properties of being Noetherian and henselian local are preserved under localisation and quotients, and so we may assume that $s = p_j$, $\eta = p_{j+1}$, and that $\text{dim}(R)=1$. By the theorem of Krull-Akizuki (see [\cite{stacks}, Tag 00P7]), the integral closure of $R$ in its field of fractions is a normal noetherian ring of dimension $1$. Localising at a prime ideal lying over the point $s$, we obtain a discrete valuation ring denoted by $F$.

Fix a closed point $x\in Z$. Let $X_F$ denote the base change of $X$ with $\text{Spec}(F)$, and let $x'\in X_F$ be any closed point lying over $x\in Z$. 

As $X_1$ and $X_2$ are geometrically integral and the generic fibre of $X_F$ is isomorphic to that of $X$, it suffices to show that the $\text{sp}_{s'\rightarrow \eta'}(X_{i,F})$ agree, where $s'$ and $\eta'$ are the closed and generic points of $\text{Spec}(F)$, respectively, and $X_{i,F}$ is the component of $X_{F, s'}$ lying over $X_i$. Hence we reduce to the case where $S = \text{Spec}(R)$ is the spectrum of a discrete valuation ring. 

Let $Y_i = \text{sp}_{s\rightarrow \eta}(X_i)$. Let $U \rightarrow \text{Spec}(R'[x_1, \ldots, x_n]/(x_1 \cdots x_l - b))$ be a local chart of $x\in Z$, with $g:U \rightarrow X$ an \'{e}tale neighbourhood of $x$, and $R'$ an \'{e}tale ring over $R$. By restricting to an open subscheme of $U$, we may assume that $U$ is connected. As $b$ is a unit over $\eta$ we find that the generic fibre of $U$ is smooth. 

Because $g(U)$ is an open set containing the generic points of $X_1$ and $X_2$, so too must it contain the generic points of $Y_1$ and $Y_2$. Furthermore, $g(U)$ is connected, being the continuous image of a connected space, and flat over $S$ as it is open in $X$. The \'{e}tale local rings points of $X_s$ are reduced by Lemma 15.42.4 of [\cite{stacks}, Tag 07QL], and the same lemma implies that $X_s$ is reduced. 


Hence $g(U)$ is connected, flat over $S$, and has reduced special fibre, so by Lemma 36.26.7 of [\cite{stacks}, Tag 055C] the generic fibre of $g(U)$ over $S$ is connected, and as it is smooth it is also irreducible. Hence the generic points of $Y_1$ and $Y_2$ are equivalent, whence $\text{sp}_{s\rightarrow \eta}(X_1) =\text{sp}_{s\rightarrow \eta}(X_2)$.
\end{proof}

\begin{definition}
Let $f: X \rightarrow S$ be a finite-type morphism. Then $\text{Sing}_f(X)$ is the non-smooth locus of $f$, viewed as a closed subscheme of $X$ with the reduced induced structure. 
\end{definition}

As $f: X\rightarrow S$ is flat and of finite presentation, $f$ is smooth at a point $x$ if and only if $x$ is a smooth point of the fibre $X_{f(x)}$. By Lemma 3.5 of \cite{li}, the set of connected components of the non-smooth locus $\text{Sing}_f(X)$ of $X/S$ is in bijection with the set of connected components of the singular locus of $X_s$ via taking the fibre over the closed point. Let $C$ be a connected component of $\text{Sing}_f(X)$, and let $\eta$ be any generalisation of $s$ with $C_{\eta}$ non-empty. Lemma 3.15a of \cite{li} shows that the label of $C_{\eta}$ is the image of the label of $C_s$ in $\mathcal{O}_{S,\overline{\eta}}$, where $\overline{\eta}$ corresponds to a choice of an algebraic closure of $k(\eta)$. In particular, if the label of $C_s$ is $0$, then the label of any non-empty fibre $C_{\eta}$ of $C$ is $0$.

\begin{remark}\label{fittingideal}
By Lemma 30.10.3 of [\cite{stacks}, Tag 0C3H], the non-smooth locus of a flat morphism locally of finite presentation with equidimensional fibres of dimension $d$ is cut out by the $d$-th fitting ideal of $\Omega_{X/S}$. Furthermore, the fitting ideal commutes with arbitrary base change by Lemma 30.10.1 of [\cite{stacks}, Tag 0C3H]. 

In particular, if $X/S$ is a semistable morphism with $n$-dimensional fibres then $\text{Sing}_f(X)$ is cut out by the $n$-th fitting ideal. If \linebreak $U\rightarrow T = \text{Spec}(R[x_1, \ldots, x_n]/(x_1\cdots x_l - b))$ is a local chart of some closed point $x\in X$, then $\text{Sing}(U)\cong \text{Sing}(X)\times_X U$ and also $\text{Sing}(U)\cong \text{Sing}_(T)\times_T U$.
\end{remark}

\begin{lemma}\label{flatnessofsing}
Let $X/S$ be a semistable morphism with $S$ a Noetherian strictly henselian local scheme with closed point $s$, and let $C$ a connected component of the singular locus $\text{Sing}_f(X)$ of $X$ whose label is $0$. Then $C/S$ is flat and proper over $S$.



\end{lemma}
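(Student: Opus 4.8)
The plan is to treat properness and flatness separately: properness is immediate, and flatness is the real content, which I would reduce to an explicit monomial computation in a local chart followed by faithfully flat descent.

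For properness, write $S = \text{Spec}(R)$ with $R$ Noetherian strictly henselian local. Then $X$ is Noetherian, and hence so is its closed subscheme $\text{Sing}_f(X)$. A Noetherian scheme has only finitely many connected components, each of which is open and closed, so $C$ is closed in $\text{Sing}_f(X)$ and therefore closed in $X$. Thus $C \hookrightarrow X$ is a closed immersion, and composing with the proper morphism $X \to S$ shows that $C/S$ is proper.

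For flatness I would work \'{e}tale-locally and then descend. Since $C$ is quasi-compact I can cover it by finitely many local charts $g_i : U_i \to \text{Spec}(R'[x_1,\ldots,x_n]/(x_1\cdots x_l - b))$ centred at closed points of $C$, with $R'$ \'{e}tale over $R$. Because $S$ is strictly henselian we have $\mathcal{O}_{S,s}^{\text{sh}} = R$, and since the label of $C$ is $0$, after localising $R'$ at the chosen point over $s$ we may take $R' = R$ and $b = 0$; that is, $U_i \to T_i := \text{Spec}(R[x_1,\ldots,x_n]/(x_1\cdots x_l))$. By Remark \ref{fittingideal} one has $\text{Sing}(U_i) \cong \text{Sing}_f(X)\times_X U_i \cong \text{Sing}(T_i)\times_{T_i} U_i$. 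The part lying over $C$, namely $C \times_X U_i = C \times_{\text{Sing}_f(X)} \text{Sing}(U_i)$, is an open and closed subscheme of $\text{Sing}(U_i)$ that maps \'{e}tale to $C$, and as $i$ varies these cover $C$, so $V := \coprod_i (C\times_X U_i) \to C$ is faithfully flat. It now suffices to show $\text{Sing}(T_i) \to S$ is flat: then $\text{Sing}(U_i) \to S$ is flat (it is \'{e}tale over $\text{Sing}(T_i)$), hence so is its open-closed part $C\times_X U_i$, hence so is $V$; and descent of flatness along the faithfully flat $V \to C$ (for $R \to \mathcal{O}_C \to \mathcal{O}_V$ with $\mathcal{O}_C \to \mathcal{O}_V$ faithfully flat and $R \to \mathcal{O}_V$ flat, one gets $R \to \mathcal{O}_C$ flat) gives flatness of $C/S$. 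To see that $\text{Sing}(T_i) \to S$ is flat, I would compute its defining ideal: the relevant Fitting ideal is $(c_1,\ldots,c_l)$ with $c_j = \prod_{k\neq j,\,k\le l} x_k$. These generators are squarefree monomials, so the ideal is radical and the Fitting-scheme structure already agrees with the reduced structure used to define $\text{Sing}_f$. As $(c_1,\ldots,c_l)$ is a monomial ideal with $R$-independent generators, $R[x_1,\ldots,x_n]/(c_1,\ldots,c_l)$ is a free $R$-module on the standard monomials, so $\text{Sing}(T_i) \to S$ is in fact free, in particular flat.

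The only genuine subtlety, and the step I would be most careful about, is the interplay between the reduced induced structure defining $\text{Sing}_f$ and the Fitting-ideal structure that commutes with base change: flatness of the local model rests on these two coinciding, which holds here precisely because the singular-locus ideal of $x_1\cdots x_l = 0$ is generated by squarefree monomials and is therefore radical. Once this is in hand, the reduction to the case $b = 0$ via strict henselianness and the descent of flatness along the \'{e}tale charts are both formal.
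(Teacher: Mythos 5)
Your proof is correct and follows essentially the same route as the paper's: reduce via a local chart (with $b=0$ by strict henselianness and the vanishing label) to the model $\operatorname{Spec}(R[x_1,\ldots,x_n]/(x_1\cdots x_l))$, observe that its singular ideal $(x_1\cdots\hat{x_j}\cdots x_l)_j$ gives a quotient free as an $R$-module, and transfer flatness back through the chart using the base-change compatibility of Fitting ideals from Remark \ref{fittingideal}. Your version is in fact slightly more careful than the paper's in two respects — you restrict to the part of $\operatorname{Sing}(U_i)$ lying over $C$ rather than concluding flatness of all of $\operatorname{Sing}_f(X)$, and you explicitly address the agreement of the reduced induced structure with the Fitting-ideal structure — but these are refinements of the same argument, not a different one.
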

\begin{proof}
$X$ is proper over $S$ and $C$ is a closed subscheme of $X$, so $C$ is proper over $S$.

Any point $x\in C$ admits a local chart \linebreak $U \rightarrow T = \text{Spec}(R[x_1,\ldots, x_n]/(x_1\cdots x_l))$, where $g: U \rightarrow X$ is an \'{e}tale neighbourhood of $x$. As $g$ and $h$ are open and flatness can be checked \'{e}tale locally, we may assume that $g$ and $h$ are surjective by replacing $X$ and $T$ with their respective images under $g$ and $h$.

The non-smooth locus $\text{Sing}(T)$ of $T$ is defined by the ideal $I$ generated by polynomials of the form $x_1\cdots \hat{x_i}\cdots x_l$, $1\leq i \leq l$, where $\hat{x_i}$ denotes the exclusion of $x_i$ from the term. Viewed as a module over $R$, $R[x_1,\ldots, x_n]/I$ is free with basis \[\{x_1^{i_1}\cdots x_n^{i_l}| i_j\in \mathbb{Z}\, \forall \, i, j \text{ and there exists at least two integers }k\in [1,l] \text{ with } i_k=0 \}.\] In particular, $\text{Sing}(T)$ is flat over $S$.

By \hyperref[fittingideal]{Remark \ref*{fittingideal}}, $\text{Sing}(T)$ (resp. $\text{Sing}(U)$ and $\text{Sing}_f(X)$) is cut out by the $d$-th fitting ideal of the module of relative differentials of $T$ (resp. $U$ and $X$) over $S$, where $d = n-1$. As formation of the fitting ideal commutes with arbitrary base change, we have that $\text{Sing}(U)$ is isomorphic to $\text{Sing}(T)\times_T U$, which is \'{e}tale over $\text{Sing}(T)$ and hence flat over $S$. Finally, as $\text{Sing}(U) \cong \text{Sing}_f(X)\times_X U$ is flat over $S$ and flatness can be checked \'{e}tale locally, we conclude that $\text{Sing}_f(X)$ is flat over $s$. 



\end{proof}

\begin{corollary}\label{edgesgi}
Let $X/S$ be a semistable morphism, with $(S,s)$ the spectrum of an integral Noetherian strictly henselian local ring with generic point $\eta$. Let $Y\subset X_{\eta}$ denote an irreducible component of the non-smooth locus of $X_{\eta}$. Then $Y$ is geometrically irreducible.
\end{corollary}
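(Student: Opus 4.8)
The plan is to reduce the statement to \hyperref[flatnessofsing]{Lemma \ref*{flatnessofsing}} and then transcribe the argument of \hyperref[smnonempty]{Lemma \ref*{smnonempty}}, with $X$ replaced by the connected component of $\text{Sing}_f(X)$ through $Y$.

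First I would pin down which part of the singular locus survives over $\eta$. Because $R$ is an integral strictly henselian local ring, $\eta$ is its generic point and $k(\eta) = \operatorname{Frac}(R)$, so a label $(b)$ is nonzero exactly when $b$ is a unit in $k(\eta)$. By \hyperref[fittingideal]{Remark \ref*{fittingideal}} the non-smooth locus commutes with base change, so the non-smooth locus of $X_\eta$ is the fibre $(\text{Sing}_f X)_\eta$. Working in a local chart $U \to T = \text{Spec}(R[x_1,\ldots,x_n]/(x_1\cdots x_l - b))$, the vanishing of all the partials $x_1\cdots\hat{x_i}\cdots x_l$ along $\text{Sing}(T)$ forces $b = x_1\cdots x_l = 0$ there; hence $\text{Sing}_f(X)$ is contained in the zero locus of its own labels, and any connected component with nonzero label has empty generic fibre. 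Therefore every irreducible component $Y$ of the non-smooth locus of $X_\eta$ is an irreducible component of $C_\eta$ for some connected component $C$ of $\text{Sing}_f(X)$ with label $0$; note that $Y$ is integral with its reduced induced structure.

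Fix such a $C$. By \hyperref[flatnessofsing]{Lemma \ref*{flatnessofsing}} it is flat and proper over $S$, and the explicit free $R$-basis of $R[x_1,\ldots,x_n]/I$, with $I = (x_1\cdots\hat{x_i}\cdots x_l)$, exhibited there lets me read off that the fibres of $C/S$ are reduced, pure-dimensional of dimension $\dim(X_s)-1$, and have dense smooth locus, namely the union of the $1$-strata. With these properties in hand, \hyperref[compsclosedfromgeneric]{Lemma \ref*{compsclosedfromgeneric}} and \hyperref[nosharedirred]{Lemma \ref*{nosharedirred}} apply to $C$ in place of $X$, and I can repeat the proof of \hyperref[smnonempty]{Lemma \ref*{smnonempty}} almost verbatim: writing $\overline{Y}$ for the closure of $Y$ in $C$ and $U = C \setminus \bigcup_{Y'\neq Y}\overline{Y'}$, the set $U_s$ is dense in each irreducible component of $\overline{Y}_s$. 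The components of $C_s$ are closures of $1$-strata of $X_s$, which are geometrically irreducible by \hyperref[compsareirr]{Lemma \ref*{compsareirr}} (as $k(s)$ is separably closed) and smooth over $k(s)$; since a nonempty smooth scheme over a separably closed field has a rational point, $C_s$ has $k(s)$-rational points in the smooth locus of each of its components. Lifting such a point through the smooth locus of $\overline{Y}/S$ by 6.2.13 of \cite{liu} produces a $k(\eta)$-rational point at which $Y/\eta$ is smooth, so $Y$ is geometrically integral by [\cite{stacks}, Tag 04QM], and in particular geometrically irreducible.

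I expect the main obstacle to be the middle step: $C$ is not itself a semistable morphism — étale-locally it is the union of the codimension-two coordinate subspaces $V(x_i,x_j)$, which for $l\geq 3$ meet along a common subspace rather than transversally — so Remark \ref{smoothlocusisdense} and the structure theory of Section \ref{sec:two} do not apply to it directly, and the reduced, pure-dimensional and dense-smooth-locus properties of its fibres must instead be extracted by hand from the local model of \hyperref[flatnessofsing]{Lemma \ref*{flatnessofsing}}. Granting those properties, the remainder is a faithful copy of \hyperref[smnonempty]{Lemma \ref*{smnonempty}}.
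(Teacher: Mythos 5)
Your proposal is correct and follows essentially the same route as the paper: observe that the connected component of $\text{Sing}_f(X)$ meeting $X_\eta$ must have label $0$ because $S$ is integral, invoke \hyperref[flatnessofsing]{Lemma \ref*{flatnessofsing}} for flatness and properness, and then rerun the argument of \hyperref[smnonempty]{Lemma \ref*{smnonempty}} with that component in place of $X$. The paper dismisses the last step as ``completely analogous''; your verification from the local model that the fibres of the component are reduced, pure-dimensional, and have dense smooth locus is exactly the checking that phrase suppresses.
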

\begin{proof}
The label of the connected component of $\text{Sing}_f(X)$ containing $Y$ is necessarily $0$ by the integrality of $S$, and so by \hyperref[flatnessofsing]{Lemma \ref*{flatnessofsing}} we conclude $C$ is flat over $S$. The remainder of the proof proceeds in a manner completely analogous to that of \hyperref[smnonempty]{Lemma \ref*{smnonempty}}.

\end{proof}

Let $C$ be a connected component of the non-smooth locus of $X_s$ with label $(b)$, and suppose the generator $b$ of $(b)$ is not a unit over $\eta$ for some generalisation $\eta$ of $s$. After taking a suitable quotient in $S$ we may assume that $\eta$ is the generic point and that $b=0$. We now describe an analogue for $\text{sp}_{s\rightarrow \eta}$ to the $1$-strata of $X_s$ contained in $C$. 

Let $Z\subset C$ be a $1$-strata with generic point $\eta_Z$, and let \linebreak $U\rightarrow T = \text{Spec}(R[x_1,\ldots, x_n]/(x_1 x_2)$ denote a local chart of a closed point $x\in Z$. Fix a lift $p$ of $\eta_Z$ to $U$, necessarily mapping to the point defined by $(m_s, x_1, x_2)$ of $T$. As $(m_s, x_1, x_2)$ generalises in $T$ to the point $(m_{\eta}, x_1, x_2)$, we can find a point $q\in U$ that is a generalisation of $p$ lying over the point defined by $(m_{\eta}, x_1, x_2)$. 

Let $\eta_Y$ be the image of $q$ in $X$. This is a generalisation of $\eta_Z$, and as $U$ is \'{e}tale over $T$ and $X$ with the generic points of the $1$-strata of the fibres of $U$ lying over the generic points of the $1$-strata of the fibres of $T$ and $X$, we see that $\eta_Y$ is the generic point of an element $Y$ of the $1$-strata of $X_{\eta}$. 

\begin{lemma}
There is a unique point $\eta_Y\in X_{\eta}$ generalising $\eta_Z$ that is the generic point of an element of the $1$-strata of $X_{\eta}$. 
\end{lemma}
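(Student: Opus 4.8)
The plan is to prove uniqueness; existence is already furnished by the point $\eta_Y = g(q)$ constructed in the paragraph preceding the statement, so it remains to show that any point of $X_\eta$ which generalises $\eta_Z$ and is generic in $\text{Sing}_f(X_\eta)$ must coincide with it. I work throughout in the reduced situation set up above, so that $R$ is a (strictly henselian, Noetherian) domain, $\eta$ is the generic point of $S$, and the connected component $C$ of $\text{Sing}_f(X)$ containing $Z$ has label $0$; by \hyperref[flatnessofsing]{Lemma \ref*{flatnessofsing}} this $C$ is flat over $S$.

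The first step is to translate the two conditions imposed on $\eta_Y$ into a statement about irreducible components. Being the generic point of a $1$-strata of $X_\eta$ means being the generic point of an irreducible component of $\text{Sing}_f(X_\eta) = \text{Sing}_f(X)_\eta$ (by \hyperref[smoothlocusisdense]{Remark \ref*{smoothlocusisdense}}), and a generalisation of $\eta_Z$ lies in the same connected component of $\text{Sing}_f(X)$ as $\eta_Z$ does, namely in $C$; hence such an $\eta_Y$ is the generic point of an irreducible component of $C_\eta$. As $C$ is flat over $S$, its associated points lie over the unique associated point $\eta$ of the integral scheme $S$, and since $C$ is reduced these are exactly the generic points of its irreducible components; so every irreducible component of $C$ dominates $S$, and the generic points of the components of $C_\eta$ are precisely the generic points of the components of $C$. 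Thus $\eta_Y$ is the generic point of an irreducible component $D$ of $C$, and the generalisation condition $\eta_Z \in \overline{\{\eta_Y\}} = D$ says exactly that $D$ passes through $\eta_Z$. The constructed point is of this shape, so uniqueness of $\eta_Y$ is equivalent to the assertion that $\eta_Z$ lies on a \emph{unique} irreducible component of $\text{Sing}_f(X)$.

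To prove this I would pass to the local chart of $x\in Z$ used in the construction, $U \to T = \text{Spec}(R[x_1,\ldots,x_n]/(x_1x_2))$. By \hyperref[fittingideal]{Remark \ref*{fittingideal}} one has $\text{Sing}_f(U) \cong \text{Sing}_f(T)\times_T U \cong \text{Sing}_f(X)\times_X U$, where $\text{Sing}_f(T) = \text{Spec}(R[x_3,\ldots,x_n])$. So étale-locally at $\eta_Z$ the scheme $\text{Sing}_f(X)$ is isomorphic to the affine space $\text{Spec}(R[x_3,\ldots,x_n])$ over $S$, with $\eta_Z$ mapping to the generic point $\mathfrak{q} = m_s R[x_3,\ldots,x_n]$ of its special fibre; in particular $\text{Sing}_f(X)\to S$ is smooth at $\eta_Z$.

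The main obstacle is transferring irreducibility through the étale charts: although $\text{Spec}(R[x_3,\ldots,x_n])$ is irreducible, an étale morphism can a priori split a point into several branches, so Zariski-local irreducibility in the chart does not at once give that $\eta_Z$ lies on a single component of $\text{Sing}_f(X)$. I would resolve this by working with strict henselizations, which are invariant under étale localisation, giving $\mathcal{O}^{\text{sh}}_{\text{Sing}_f(X),\eta_Z} \cong \mathcal{O}^{\text{sh}}_{\text{Sing}_f(T),\mathfrak{q}}$. Since $R$ is a strictly henselian local domain it is geometrically unibranch, and $\text{Spec}(R[x_3,\ldots,x_n])$ is smooth over $S$; as smoothness over a geometrically unibranch base preserves the geometrically unibranch property, $\mathcal{O}^{\text{sh}}_{\text{Sing}_f(T),\mathfrak{q}}$ has a unique minimal prime, hence so does $\mathcal{O}^{\text{sh}}_{\text{Sing}_f(X),\eta_Z}$ and therefore so does $\mathcal{O}_{\text{Sing}_f(X),\eta_Z}$. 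Thus $\eta_Z$ lies on a unique irreducible component of $\text{Sing}_f(X)$, which by the reduction of the second paragraph yields the uniqueness of $\eta_Y$. The one point requiring care is precisely the geometric unibranch-ness of affine space over the strictly henselian domain $R$, which may be invoked from the Stacks project, or argued directly from finiteness of the normalisation of $R$ in the excellent case.
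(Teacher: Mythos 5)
Your argument is correct, and for the uniqueness part it takes a genuinely different route from the paper. The paper's proof is a one-line deferral: it invokes the flatness of $C$ over $S$ (\hyperref[flatnessofsing]{Lemma \ref*{flatnessofsing}}) and then says to ``argue as in the proof of \hyperref[smnonempty]{Lemma \ref*{smnonempty}}'', i.e.\ it runs the machinery of \hyperref[compsclosedfromgeneric]{Lemma \ref*{compsclosedfromgeneric}} and \hyperref[nosharedirred]{Lemma \ref*{nosharedirred}} on the proper flat morphism $C\to S$: the smooth locus of $C\to S$ is dense in the special fibre, and a smooth point lying on two components of $C$ would have a non-domain local ring, which is ruled out via regularity. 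You instead reduce uniqueness to the assertion that $\eta_Z$ lies on a single irreducible component of $\operatorname{Sing}_f(X)$ --- using flatness only to see that every component of $C$ dominates $S$, so that generic points of $C_\eta$ are generic points of $C$ --- and then prove that assertion by transporting geometric unibranchness of $\operatorname{Spec}(R[x_3,\dots,x_n])$ through the \'etale chart via strict henselisations. What this buys: your version makes explicit the one genuinely delicate input, namely that the number of geometric branches is preserved under smooth morphisms over the strictly henselian domain $R$ (a true and citable fact, and provable directly by the normalisation argument you sketch); notably, the paper's own \hyperref[nosharedirred]{Lemma \ref*{nosharedirred}} justifies the analogous step by saying a smooth point with two minimal primes is ``not regular'', which over a base that is merely a quotient of a strictly henselian local ring (not regular) really rests on the same unibranchness of the base that you isolate. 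The cost is that your proof is longer and leans on the explicit form of $\operatorname{Sing}_f(T)$, whereas the paper reuses lemmas already proved for $X\to S$ verbatim for $C\to S$. Both proofs hinge on \hyperref[flatnessofsing]{Lemma \ref*{flatnessofsing}}, and your handling of existence (deferring to the preceding construction) matches the paper's.
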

\begin{proof}
The above construction gives existence. Uniqueness follows from the fact the connected component $C$ of $\text{Sing}_f(X)$ is flat over $S$ by \hyperref[flatnessofsing]{Lemma \ref*{flatnessofsing}}, and then arguing as in the proof of \hyperref[smnonempty]{Lemma \ref*{smnonempty}}.

\end{proof}

\begin{definition}
Let $\eta$ be a generalisation of the closed point $s$. Let $Z$ be a $1$-strata of $X_s$ and assume that the label of $Z$ is not a unit over $\eta$. We set $\text{sp}'_{s\rightarrow \eta}(Z) = Y$, where $Y$ is the unique $1$-strata of $X_{\eta}$ as constructed above.
\end{definition}




\subsection{The specialisation map}

We now define the specialisation map from $\Gamma_s$ to $\Gamma_{\overline{\eta}}$. By \hyperref[smnonempty]{Lemma \ref*{smnonempty}} and \hyperref[edgesgi]{Corollary \ref*{edgesgi}} we find that $\Gamma_{\overline{\eta}}\cong \Gamma_{\eta}$, and so it suffices to define the specialisation map from $\Gamma_s$ to $\Gamma_{\eta}$. 

We define $\text{sp}: \Gamma_s \rightarrow \Gamma_{\eta}$ as follows: A vertex $v$ of $\Gamma_s$ corresponds to an irreducible component $Y$ of $X_s$. Let $w$ be the vertex corresponding to $\text{sp}_{s\rightarrow \eta}(Y)$ and set $\text{sp}(v) = w$. If $e$ is an edge between vertices $v$ and $w$ of $\Gamma_s$ corresponding to an element $Z$ of the $1$-strata with label $b$ we have two possibilities: 
\begin{itemize}
\item $b$ is a unit over $\eta$. In this case we have $v$ and $w$ map to the same vertex in $\Gamma_{\eta}$ by \hyperref[componentscollide]{Lemma \ref*{componentscollide}}, and we define $\text{sp}(e)$ to be the vertex $v$ and $w$ map to. 
\item $b$ is not a unit over $\eta$. In this case we set $\text{sp}(e) = f$, where $f$ is associated to the element $Y$ of the $1$-strata of $X_{\eta}$ such that $\text{sp}'_{s\rightarrow \eta}(Z) = Y$. As generalisations are transitive we have that $f$ is incident with $\text{sp}(v)$ and $\text{sp}(w)$.
\end{itemize}
The map of the edge-labellings sends a label $(b)$ to the principal ideal $(b)$ of $\mathcal{O}_{X, \overline{\eta}}$ when $\text{sp}$ sends an edge to an edge; otherwise no label is required. 

We conclude with a lemma that will be used in the proof of \hyperref[weiliscartier]{Lemma \ref*{weiliscartier}}.

\begin{lemma}\label{injectivityofsp}
Suppose that $S$ is an excellent integral strictly Henselian local ring with closed point $s$ and generic point $\eta$. Let $f: X\rightarrow S$ be a semistable morphism. Suppose we have two irreducible components $Z_1$ and $Z_2$ of $X_s$ that share a common $1$-strata whose label is $0$. Then $\text{sp}_{s\rightarrow \eta}(Z_1)\neq \text{sp}_{s\rightarrow \eta}(Z_2)$.
\end{lemma}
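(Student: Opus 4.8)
The plan is to argue by contradiction and to reduce to the case of a trait, exactly as in the proof of Lemma \ref{componentscollide}. Suppose that $\text{sp}_{s\rightarrow\eta}(Z_1) = \text{sp}_{s\rightarrow\eta}(Z_2) =: Y$. The common $1$-strata $Z$ is contained in a connected component $C$ of $\text{Sing}_f(X)$, and since labels are constant on connected components of the non-smooth locus, $C$ has label $0$; hence $C\rightarrow S$ is flat and proper by Lemma \ref{flatnessofsing}. Fixing a maximal chain of primes from $m_s$ to $m_\eta$ and passing to a one-dimensional quotient, the theorem of Krull--Akizuki produces a discrete valuation ring $F$ dominating the localisation, just as in Lemma \ref{componentscollide}. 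Since $Z_1$ and $Z_2$ are geometrically irreducible by Lemma \ref{compsareirr} they pull back to distinct components of the special fibre over $F$; the label of $Z$ remains $0$, and the equality $\text{sp}(Z_1)=\text{sp}(Z_2)$ survives because the generic fibre is unchanged. It therefore suffices to derive a contradiction when $S=\text{Spec}(F)$ is a trait.

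I would then examine the branches through a local chart $g\colon U\rightarrow T=\text{Spec}(F[x_1,\ldots,x_n]/(x_1x_2))$ at a closed point of $Z$, with $U$ connected and $g$ étale onto an open of $X$; here $l=2$ and $b=0$ because the label is $0$. The two branches $V_1=\{x_1=0\}$ and $V_2=\{x_2=0\}$ of $T$ are free, hence flat, over $F$, their special fibres map into $Z_1$ and $Z_2$ respectively, and their generic fibres $V_{1,\eta},V_{2,\eta}$ are distinct irreducible components of $T_\eta$. Transporting along the étale map $g$, the generic branches land in $\text{sp}(Z_1)$ and $\text{sp}(Z_2)$; under the standing assumption these are both equal to $Y$. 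Since $g$ is étale, $g(V_{1,\eta}\cap U)$ and $g(V_{2,\eta}\cap U)$ are dense open subsets of the irreducible component $Y$, so they meet in a point $y$ that is simultaneously the image of a smooth point of $V_1$ and of a smooth point of $V_2$.

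The contradiction one wants is now of the type appearing in Lemma \ref{nosharedirred}: the point $y$ is smooth in its fibre, hence a regular point of $X$ over the regular base, so $\mathcal{O}_{X,y}$ is a domain, while its lying on two a priori distinct branches ought to supply two minimal primes. To make this genuine I would use the flatness and properness of $C$ from Lemma \ref{flatnessofsing} to propagate the two-branch structure coherently along $C$, arguing as in Lemma \ref{smnonempty}: the finite étale double cover of the two-branch locus of $C$ matches branch $1$ over $Z$ with $Z_1$ and branch $2$ with $Z_2$, and I would track these two sheets down to the generic $1$-strata $E=\text{sp}'_{s\rightarrow\eta}(Z)$ to see that the two generic branches of $Y$ along $E$ come from genuinely distinct components, contradicting $Z_1\neq Z_2$.

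The hard part is exactly this last step. Étale-locally the two branches are visibly distinct, but nothing in the local chart by itself prevents them from recombining into a single self-intersecting component $Y$ of $X_\eta$ (a node of $Y$ along $E$), in which case the point $y$ above is a perfectly good smooth point of $X$ lying on only one global component and no contradiction results. Converting the local separation of branches into a global one is therefore the crux: I expect it to require combining the flatness and properness of $C$ with the density of the fibrewise smooth locus, and in particular controlling the monodromy of the branch double cover of $C$ under specialisation from $\eta$ to $s$, so that distinctness of the sheets over the closed point forces distinctness of the components $\text{sp}(Z_1)$ and $\text{sp}(Z_2)$ they determine over the generic point.
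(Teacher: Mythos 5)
You have correctly set up the contradiction and, to your credit, you have put your finger on exactly the right difficulty: \'{e}tale-locally the two branches through the $1$-strata are distinct, but nothing local forbids them from gluing into a single self-intersecting component $Y$ of $X_\eta$, in which case your point $y$ lies on one global component and no contradiction arises. The problem is that your proposal stops there. The final paragraph gestures at ``controlling the monodromy of the branch double cover of $C$'' but does not actually carry out any argument that converts the local separation of branches into a global one, so the proof is genuinely incomplete. (A secondary remark: the reduction to a trait via Krull--Akizuki, borrowed from Lemma \ref{componentscollide}, is not needed here and introduces its own verification burden --- $\overline{Y}$ need not be flat over $S$, so one must check that the equality $\mathrm{sp}(Z_1)=\mathrm{sp}(Z_2)$ and the identification of closures survive the base change to the valuation ring $F$; the paper avoids this entirely by working directly over the strictly henselian local base.)

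The device the paper uses to close precisely the gap you identified is the \emph{normalisation} of $\overline{Y}$. Since normalisation commutes with \'{e}tale base change, one computes $(\overline{Y})^{\nu}$ through a local chart $U \rightarrow T = \mathrm{Spec}(R[x_1,\ldots,x_n]/(x_1x_2))$: the normalisation $T^{\nu}$ is the disjoint union of the two smooth branches $\{x_i=0\}$, so $(\overline{Y})^{\nu}$ is smooth over $S$; it is also proper over $S$ because $\overline{Y}$ is excellent. Now the global input you were missing is Zariski connectedness (Corollary 15.5.4 of \cite{egaiv_iii}): a proper smooth scheme over the strictly henselian local $S$ with irreducible generic fibre has connected, hence irreducible, special fibre. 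But $(\overline{Y})^{\nu}\rightarrow\overline{Y}$ is surjective and $\overline{Y}_s$ contains both $Z_1$ and $Z_2$, so the generic points $\eta_{Z_1}$ and $\eta_{Z_2}$ both lie in the image of the single generic point of the special fibre of $(\overline{Y})^{\nu}$, forcing $Z_1=Z_2$ --- the desired contradiction. In other words, the normalisation is exactly the globally defined ``branch cover'' you were looking for, and its properness and smoothness over the henselian base is what rules out the two local sheets recombining.
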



\begin{proof}
Let us proceed by contradiction and assume $\text{sp}_{s\rightarrow \eta}(Z_1) = \text{sp}_{s\rightarrow \eta}(Z_2) = Y$, and let $\overline{Y}$ denote its closure in $X$. We shall show that the normalisation of $\overline{Y}$ is smooth with irreducible special fibre, and hence that $Z_1 = Z_2$, leading to the desired contradiction.

Consider any point $x$ in the special fibre $\overline{Y}_s$ of $\overline{Y}$ lying in a $1$-strata. Fix a local chart $U\rightarrow T = \text{Spec}(R[x_1, \ldots, x_n]/(x_1 x_2))$ of $x$.

The normalisation $T^{\nu}$ of $T$ is the disjoint union of its irreducible components, which are themselves defined by ideals of the form $(x_i)$ for $1\leq i \leq 2$.

As normalisation commutes with \'{e}tale base change by [\cite{stacks}, Tag 082F], we have \[U^{\nu} = U \times_{T} T^{\nu}.\] Hence $U^{\nu}$ is \'{e}tale over $T^{\nu}$, and as $T^{\nu}$ is smooth over $S$, so too is $U^{\nu}$.

Again using the fact that normalisation commutes with \'{e}tale base change, we note that $U^{\nu}$ is \'{e}tale over $X^{\nu}$. Thus as smoothness is \'{e}tale local on the source we conclude that $(\overline{Y})^{\nu}$ is smooth over $S$. 

It remains to show that $(\overline{Y})^{\nu}$ is proper over $S$. As $\overline{Y}$ is excellent (being of finite type over $S$) this follows by Theorem 8.2.39 of \cite{liu}.

Hence $(\overline{Y})^{\nu}$ is proper and smooth over $S$, and so by Corollary 15.5.4 of \cite{egaiv_iii} its special fibre is connected and smooth, hence irreducible. The morphism $(\overline{Y})^{\nu} \rightarrow Y$ is surjective by Lemma 28.51.5 of [\cite{stacks}, Tag 035E], and so the generic point of the special fibre of $(\overline{Y})^{\nu}$ must map to a generic point of one of the irreducible components of $Y_s$, say $\eta_{Z_1}$. As $\eta_{Z_2}$ is also in the image, it is necessarily a specialisation of $\eta_{Z_1}$, contradicting the assumption $Z_1\neq Z_2$. 
\end{proof}


\section{Cartier divisors and alignment}\label{sec:four}
\subsection{Constructing Cartier divisors on $X$}

Having defined the sp map we are ready to construct Cartier divisors on $X$ in a manner analogous to the construction in chapter 5 of \cite{holmes}. In this subsection we shall consider semistable morphisms $X\rightarrow S$ where $S=\text{Spec}(R)$ is the spectrum of an excellent regular strictly henselian local ring. We shall denote the closed point of $S$ by $s$ and let $\Gamma_s$ denote the dual graph. The following definition mirrors that of Definition 5.5 of \cite{holmes}.

\begin{definition}
Let $a\in R$ be non-zero and a non-unit. Let $V(a)$ denote the set of edges of $\Gamma_s$ whose labels $b$ are such that $(b)^m=(a)$ as ideals in $R$ for some $m\geq 1$. Let $\Gamma_s(a)$ denote the graph obtained from $\Gamma_s$ by removing all edges in set $V(a)$.
\end{definition}

Let $\eta_1, \ldots, \eta_r$ denote the generic points of $\text{Spec}(R/a)$, and let $m_i$ denote the order of vanishing of $a$ at $\eta_i$. Let $H$ denote the set of vertices of a connected component of $\Gamma_s(a)$. For every $1\leq i \leq r$, let $Z_i^1, \cdots, Z_i^{s_i}$ denote the vertices of $\Gamma_{\eta_i}$ which are images under sp of vertices in $H$. Note that each $Z_i^j$ can be viewed as a prime Weil divisor on $X$.

We now construct a Weil divisor on $X$ analogous to that of Definition 5.6 of \cite{holmes}.

\begin{definition}
Define a Weil divisor $\text{div}(a;H)$ by

\[\text{div}(a;H) = \sum_{i=1}^{r} m_i \sum_{j=1}^{s_i} Z_i^j.\]
\end{definition}

\begin{lemma}\label{weiliscartier}
The Weil divisor $\text{div}(a;H)$ defined above is a Cartier divisor on $X$.
\end{lemma}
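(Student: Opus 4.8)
The plan is to verify that the Weil divisor $\text{div}(a;H)$ is Cartier by checking it is locally principal at every point of $X$, working \'{e}tale locally via local charts. Since being Cartier is an \'{e}tale-local condition, it suffices to exhibit a local equation for $\text{div}(a;H)$ in a neighbourhood of each closed point $x\in X$. The geometry splits naturally into three cases according to where $x$ lies: first, points $x$ lying in the smooth locus of $X/S$; second, points $x$ lying on a $1$-strata $Z$ whose label is in $V(a)$ (i.e. whose label is commensurable with $(a)$); and third, points $x$ on a $1$-strata whose label is \emph{not} in $V(a)$.

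For the first case, away from the singular locus $X$ is regular and the prime Weil divisors $Z_i^j$ meeting such a point are locally cut out by single equations, so local principality is immediate. The heart of the argument is the second case. Here I would fix a local chart $U\rightarrow T=\text{Spec}(R[x_1,\ldots,x_n]/(x_1\cdots x_l - b))$ at $x$, where $(b)$ is the label and by hypothesis $(b)^m=(a)$ for some $m\geq 1$. The key point is that alignment is not yet assumed here, but the relation $(b)^m=(a)$ means $a$ and $b$ generate commensurable ideals, so $a=\mathrm{(unit)}\cdot b^m$ after adjusting. The idea is that the components $Z_i^j$ passing through $x$, when pulled back to the chart, correspond precisely to the branches $V(x_k)$ of the divisor $x_1\cdots x_l$, and I expect the candidate local equation to be $a$ (or a unit multiple of $b^m$) itself, whose vanishing locus on $U$ decomposes into exactly the branches appearing in $\text{div}(a;H)$ with the correct multiplicities $m_i$. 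I would use the specialisation map $\text{sp}:\Gamma_s\rightarrow\Gamma_{\eta_i}$ constructed in Section \ref{sec:three}, together with \hyperref[injectivityofsp]{Lemma \ref*{injectivityofsp}}, to match the branches $x_k$ of the chart with the components $Z_i^j$ and confirm that the multiplicities of vanishing of $a$ along each branch are exactly the $m_i$.

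The main obstacle, which I expect to be the third case, is showing local principality at a point $x$ lying on a $1$-strata $Z$ whose label $(c)$ does \emph{not} satisfy $(c)^m=(a)$. Such an edge survives in $\Gamma_s(a)$, so both branches through $x$ lie in the same connected component $H$, or else in components contributing to the divisor in a constrained way. The subtlety is that at such a node the two local branches $x_1,x_2$ specialise (under $\text{sp}$) to distinct components of some $X_{\eta_i}$ — precisely the content of \hyperref[injectivityofsp]{Lemma \ref*{injectivityofsp}} when the label is $0$ — and one must check that $a$ vanishes to the \emph{same} order along both branches, so that $a$ remains a valid local equation without introducing spurious components. This is exactly where the definition of $V(a)$ and the connected-component structure of $\Gamma_s(a)$ earn their keep: an edge with label not commensurable to $(a)$ cannot ``split'' the multiplicity, whereas the edges removed to form $\Gamma_s(a)$ are precisely those across which $a$ could vanish to differing orders.

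Assembling these, I would argue that on each chart the function $a\in R$, viewed in $\mathcal{O}_{X,x}$, cuts out exactly $\text{div}(a;H)$ locally: its divisor on $U$ is $\sum_k (\text{ord}_{x_k} a)\, V(x_k)$, and the matching of branches to the $Z_i^j$ via $\text{sp}$ identifies these orders with the coefficients $m_i$ in the definition of $\text{div}(a;H)$. The global coherence — that these local equations glue, i.e. that the chosen generator $a$ gives a consistent local equation across overlapping charts — follows since $a$ is a single global element of $R$ pulled back to $X$, so there is genuinely one candidate function whose associated Cartier divisor I need only identify with the given Weil divisor. The verification that no extra components appear and that multiplicities agree on the singular locus is where I would spend the bulk of the careful work, leaning on \hyperref[injectivityofsp]{Lemma \ref*{injectivityofsp}} and the explicit module basis computation already used in \hyperref[flatnessofsing]{Lemma \ref*{flatnessofsing}}.
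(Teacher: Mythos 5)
Your overall strategy (reduce to closed points of the closed fibre, pass to a local chart, and use the specialisation map together with \hyperref[injectivityofsp]{Lemma \ref*{injectivityofsp}} to match branches of the chart with the components $Z_i^j$) is the right one, but your proposed local equation is wrong at exactly the points where the lemma has content. You conclude that the single global function $a$ cuts out $\text{div}(a;H)$ everywhere. If that were true, $\text{div}(a;H)$ would coincide with the pullback $f^*\text{div}_S(a)$, which is $\sum_i m_i\sum_j Z_i^j$ with $j$ running over \emph{all} components of $X_{\eta_i}$, not only those in the image of $H$ under $\text{sp}$. The two agree only when every vertex of $\Gamma_s(a)$ lies in $H$. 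The genuinely hard case is a closed point $x$ lying on components of $X_s$ some of which belong to $H$ and some of which do not; such points exist precisely because the edges removed to form $\Gamma_s(a)$ are those with label commensurable to $(a)$, and $H$ is only one connected component of the resulting graph. At such a point, in a chart $U\rightarrow \text{Spec}(R[x_1,\ldots,x_n]/(x_1\cdots x_l-b))$ with $(b)^n=(a)$, one has $a= u\,(x_1\cdots x_\alpha)^n(x_{\alpha+1}\cdots x_l)^n$ after ordering the coordinates so that $x_1,\ldots,x_\alpha$ are the branches over vertices of $H$; the correct local equation for $\text{div}(a;H)$ is the partial product $(x_1\cdots x_\alpha)^n$, not $a$. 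Your own worry about ``spurious components'' is realised here: $a$ itself vanishes with multiplicity $m_i$ along the closures of the components of $X_{\eta_i}$ lying over vertices \emph{outside} $H$ as well, and these are excluded from $\text{div}(a;H)$ by definition. This splitting of $a$ into an $H$-part and a complementary part --- which requires the commensurability $(b)^n=(a)$ of the labels of the separating edges, i.e.\ the definition of $V(a)$, together with \hyperref[componentscollide]{Lemma \ref*{componentscollide}} and \hyperref[injectivityofsp]{Lemma \ref*{injectivityofsp}} to see that $\text{sp}$ does not send vertices of $H$ and vertices of its complement to a common component of $X_{\eta_i}$ --- is the heart of the proof and is missing from your argument.

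A secondary gap: your case division is organised by the label of the $1$-strata through $x$ and only treats points lying on a $1$-stratum. Cartier-ness must be checked at all closed points, including those in the deeper strata, where $x$ lies on an $(l-1)$-stratum with $l>2$ and up to $l$ components of $X_s$ meet; there the relevant dichotomy is not the label of an edge at $x$ but how the set of vertices through $x$ distributes between $H$ and its complement (all in $H$: local equation $a$; none in $H$: local equation a unit; mixed: the partial product above). Reorganising the cases this way, and carrying out the branch-matching in the general chart $x_1\cdots x_l-b$, closes both gaps.
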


\begin{proof}
Our proof shall closely follow that of Lemma 5.7 of \cite{holmes}. It suffices to check $D$ is Cartier at closed points of the closed fibre by Lemma 5.8 of \cite{holmes}, and in particular at points in the non-smooth locus. 

Let $x$ be a closed point in the non-smooth locus of $X_s$, and consider a local chart \[U \rightarrow \text{Spec}(R[x_1,\ldots, x_n]/(x_1\cdots x_l - b)),\] where $U$ is an \'{e}tale neighbourhood of $x$ and where necessarily $b\in m_s$ for $m_s$ the maximal ideal of $R$. From the description of the local chart we see that $x$ belongs to a unique $(l-1)$-strata $C$. Hence there are at most $l$ vertices of $\Gamma_s$ whose corresponding irreducible components of $X_s$ contain $C$.

Suppose that $k$ of these vertices, $v_1, \ldots, v_k$ are in $H$, and $r$ are not in $H$, say $v_{k+1}, \ldots, v_{k+r}$. If $k=0$ we have that $\text{div}(a;H)$ is locally cut out by a unit. If $r=0$ one shows as in \cite{holmes} shows that $\text{div}(a;H)$ is cut out locally by $a$. So assume now that $k$ and $r$ are greater than $0$.

Let $g: (U,x')\rightarrow (X,x)$ an \'{e}tale neighbourhood of $x$ with $h: U\rightarrow T = \text{Spec}(R[x_1,\ldots, x_n]/(x_1\cdots x_l - b)$ a local chart. It suffices to show that $g^*\text{div}(a;H)$ is a Cartier divisor in a neighbourhood of $x'$ by Lemma 2.9 of \cite{holmes}. 

Base changing along $V(x_i, m_s)\rightarrow s$ for any $1\leq i \leq l$, we find that $V(x_i, m_s)\times_T U$ is smooth over $s$. In particular there exists a unique generalisation $x_i'$ of $x'$ in $U_s$ lying over each point $(x_i, m_s)$ for all $1\leq i\leq l$, as otherwise any point lying over $x'$ in $V(x_i, m_s)\times_T U$ would not be smooth over $s$. 

Each irreducible component $V(x_i, m_s)$ of $T_s$, $1\leq i\leq l$, corresponds to one of the $k+r$ vertices of $\Gamma_s$ containing the point $x$, though in general $l\geq k+r$ and so the correspondence may not be injective. Let $\alpha\geq k$ be such that $V(x_i, m_s)\subset T_s$ corresponds to a vertex in $H$ for $1\leq i \leq \alpha$, and for $\alpha+1\leq j\leq l$ the irreducible components $V(x_j, m_s)$ correspond to vertices not in $H$, where $l-\alpha \geq r$. 

Fix an irreducible component of $R/(a)$, say $\eta_1$, and let $\text{sp}: \Gamma_s\rightarrow \Gamma_{\eta_1}$ be the specialisation map. Note that $\text{sp}$ does not map any of the vertices in $H$ to the image of a vertex not in $H$, as otherwise there exists an edge whose label is a unit between $H$ and the complement of $H$ by \hyperref[componentscollide]{Lemma \ref*{componentscollide}} and \hyperref[injectivityofsp]{Lemma \ref*{injectivityofsp}}, contradicting how $H$ was chosen. 

If $V_i = \text{sp}(v_i)$, then $\cup_{1\leq i \leq k} \overline{V}_i \supset \cup_{1\leq i \leq k} v_i$, and the same holds along pullbacks via $g$. Similarly, $\cup_{k+1\leq j \leq k+r} \overline{V}_j \supset \cup_{k+1\leq j \leq k+r} v_j$.

In particular $h^*(x_1\cdots x_{\alpha})$ does not vanish on $\cup_{k+1\leq j \leq k+r} g^*\overline{V_j}$, nor does $h^*(x_{\alpha+1}\cdots x_l)$ vanish on $\cup_{1\leq i \leq k} g^*\overline{V_i}$. Hence, as $\text{div}a = \text{div}(x_1\ldots x_{\alpha})^n + \text{div}(x_{\alpha+1}\ldots x_l)^n$, we conclude that $h^*(x_1\cdots x_{\alpha})$ vanishes on $g^*\overline{V_i}$, $1\leq i \leq k+1$, with the same multiplicity as $a$. This concludes the proof.

\end{proof}

\subsection{Cartier functions on graphs}

Suppose now that $X\rightarrow S$ is smooth over a dense open set $U\subset S$. In order to introduce the notion of Cartier divisors on the dual graph $\Gamma_s$ of $X_s$ we first recall the definition and existence of "test curves" in $S$ from section 5.1 of \cite{holmes}. 

\begin{definition}[Definition 5.1 of \cite{holmes}]
Let $S$ be a scheme, $s\in S$, and $U\subset S$ an open subscheme. A \textit{non-degenerate trait} in $S$ through $s$ is a morphism $\phi: \mathcal{T} \rightarrow S$ where $\mathcal{T}$ is the spectrum of a discrete valuation ring and $\phi$ maps the closed point of $\mathcal{T}$ to $s$ and the generic point of $\mathcal{T}$ to a point in $U$.
\end{definition}

\begin{lemma}[Lemma 5.2 of \cite{holmes}]\label{traitsexist}
Let $S$ be a Noetherian scheme, $s\in S$ and $U\subset S$ a dense open subscheme. Then there exists a non-degenerate trait $X$ in $S$ through $s$.
\end{lemma}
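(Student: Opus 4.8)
The plan is to construct the desired discrete valuation ring by passing through the local ring $\mathcal{O}_{S,s}$ and finding a suitable one-dimensional quotient that still meets $U$. Since $S$ is Noetherian, $s\in S$, and $U$ is dense, the first step is to reduce to a local statement: it suffices to produce a trait $\phi:\mathcal{T}\to\operatorname{Spec}(\mathcal{O}_{S,s})$ sending the closed point to the closed point and the generic point into the preimage of $U$. Because $U$ is dense in $S$, its preimage $U'$ in $\operatorname{Spec}(\mathcal{O}_{S,s})$ is a nonempty open subscheme, and density guarantees $U'$ is dense there as well.

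Next I would choose a chain of specialisations linking $s$ to a point of $U'$. Concretely, let $\eta$ be the generic point of an irreducible component of $\operatorname{Spec}(\mathcal{O}_{S,s})$ that meets $U'$; then $s$ lies in the closure of some point $\eta_0\in U'$. Replacing $\mathcal{O}_{S,s}$ by the quotient $\mathcal{O}_{S,s}/\mathfrak{p}$ where $\mathfrak{p}$ corresponds to $\overline{\{\eta_0\}}$, I reduce to the case where the local ring $R$ is a Noetherian local domain with closed point $s$ and whose generic point lies in $U'$. The task becomes: find a DVR dominating $R$ whose fraction field embeds so that the generic point maps into $U'$.

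The key construction is then to pick a regular system or simply a nonzero element and use Krull-Akizuki together with standard prime-avoidance. More directly, I would take a closed point's worth of generic behaviour by choosing a height-one prime or by cutting down: if $\dim R \geq 1$, choose a prime ideal $\mathfrak{q}$ of height $\dim R - 1$ with $s\in V(\mathfrak{q})$ and $R/\mathfrak{q}$ a one-dimensional local domain whose generic point still lies in $U'$ (possible since $U'$ is dense, by prime avoidance applied to the finitely many primes not meeting $U'$). The normalisation of the one-dimensional Noetherian local domain $R/\mathfrak{q}$ is, by Krull-Akizuki, a one-dimensional normal Noetherian domain; localising at a maximal ideal lying over the closed point yields a discrete valuation ring $\mathcal{O}_{\mathcal{T}}$. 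The induced morphism $\mathcal{T}=\operatorname{Spec}(\mathcal{O}_{\mathcal{T}})\to S$ sends the closed point to $s$ and the generic point to a point of $U$, as required.

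The main obstacle is ensuring that after cutting down to dimension one the generic point still lands inside $U$, rather than being absorbed into the closed complement $S\setminus U$. This is precisely where density of $U$ is essential: the finitely many associated or minimal primes of the complement can be avoided by prime-avoidance when selecting $\mathfrak{q}$, so that the one-dimensional quotient domain $R/\mathfrak{q}$ has its generic point in $U'$. The remaining steps (Krull-Akizuki producing a normal one-dimensional ring, localisation producing a DVR, and checking the closed point maps to $s$) are routine. I expect the write-up to lean on [\cite{stacks}, Tag 00P7] for Krull-Akizuki exactly as in the proof of \hyperref[componentscollide]{Lemma \ref*{componentscollide}}.
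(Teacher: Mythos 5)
The paper does not actually prove this lemma; it is quoted verbatim from Lemma 5.2 of \cite{holmes}, so there is no internal proof to compare against. Your overall strategy --- localise at $s$, pass to a quotient domain whose generic point lies in $U$, cut down to a one-dimensional Noetherian local domain still meeting $U$, and finish with Krull--Akizuki --- is the standard route and, once complete, does prove the lemma. The two initial reductions and the final normalise-and-localise step are fine (modulo disposing of the trivial case $\dim R=0$, where $s$ itself lies in $U$ and any trait over $\operatorname{Spec}(k(s))$ works).

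The gap is in the step producing the prime $\mathfrak{q}$ with $\dim R/\mathfrak{q}=1$ whose corresponding point still lies in $U$. Writing $V(I)$ for the complement of $U$ in $\operatorname{Spec}(R)$ (so $I\neq 0$), you need a prime $\mathfrak{q}$ of coheight one with $I\not\subseteq\mathfrak{q}$. This does not follow from prime avoidance: it is not enough for $\mathfrak{q}$ to differ from the finitely many minimal primes of $I$, since a coheight-one prime can strictly contain one of them and hence still lie in $V(I)$; and the prime-avoidance lemma points the wrong way (it concerns an ideal contained in a finite union of primes, not a prime failing to contain a given ideal). The statement you need is true but requires an argument, for instance by induction on $\dim R$: for $\dim R\geq 2$, a Noetherian local domain has infinitely many height-one primes, while only finitely many of them contain a fixed nonzero $f\in I$ (such a prime is minimal over $(f)$), so one may choose a height-one prime $\mathfrak{p}$ with $f\notin\mathfrak{p}$, pass to the local domain $R/\mathfrak{p}$, whose dimension is strictly between $0$ and $\dim R$, and recurse; for $\dim R=1$ take $\mathfrak{q}=(0)$. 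Alternatively you can bypass this step entirely: after your second reduction $R$ is a Noetherian local domain whose generic point lies in $U$, and the standard result that such a ring (when not a field) is dominated by a discrete valuation ring with the same fraction field ([\cite{stacks}, Tag 00PH]; equivalently, that any specialisation in a locally Noetherian scheme is realised by a trait) finishes the proof at once --- there the reduction to dimension one is achieved by an affine blowup rather than by selecting a coheight-one prime, and Krull--Akizuki is applied exactly as you intend. Also note that ``height $\dim R-1$'' is not the same as ``$\dim R/\mathfrak{q}=1$'' without a catenarity hypothesis; it is the latter condition you need.
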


With the existence of non-degenerate traits, we now define Cartier functions on $\Gamma_s$. Recall that we have an edge-labeling $l: E \rightarrow M_s$ on $\Gamma_s$, where $M_s$ denotes the monoid of principal ideals of $\mathcal{O}_{S,s}$.

\begin{definition}[c.f. Definition 5.3 of \cite{holmes}]
Let $(S,s)$ be the spectrum of a Noetherian strictly henselian local ring and let $f:X\rightarrow S$ be a semistable morphism that is smooth over a dense open set $U\subset S$. Denote by $l:E\rightarrow M_s$ the edge-labelling of $\Gamma_s$ by elements of $M_s$ as in \hyperref[edgelabelling]{Definition \ref*{edgelabelling}}. 

Let $\phi: \mathcal{T} \rightarrow S$ be a non-degenerate trait through $s$, and let $\text{ord}$ denote the standard valuation on elements of $\Gamma(X)$. We say that a vertex-labelling $m:V\rightarrow \mathbb{Z}$ of $\Gamma_s$ is \textit{$T$-Cartier} if for every edge $e\in \Gamma_s$ incident with vertices $v_1$ and $v_2$ we have that $m(v_1)-m(v_2)$ is divisible by $\text{ord}(\phi^*(l(e)))$, where $l(e)$ is the label of edge $e$.

Given a fibral Weil divisor $D$ on $X_{\mathcal{T}}$, we define a vertex labelling $m$ of $\Gamma_s$ by attaching to $v$ the multiplicity of $D$ along the irreducible component of $X_s$ corresponding to $v$. 
\end{definition}

The significance of this definition is found in the following lemma:

\begin{lemma}\label{cartieriffTcartier}
With the notation as above, a vertex labelling $m$ is $\mathcal{T}$-Cartier if and only if there exists a fibral Cartier divisor $D$ on $X_{\mathcal{T}}$ whose labelling is $m$.
\end{lemma}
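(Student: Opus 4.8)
I would base change everything along $\phi$ and argue \'{e}tale-locally on $X_{\mathcal{T}} = X\times_S \mathcal{T}$. Since $\phi$ sends the generic point of $\mathcal{T}$ into $U$ and the closed point to $s$, a local chart of $X$ pulls back to $\text{Spec}(\mathcal{O}_{\mathcal{T}}[x_1,\ldots,x_n]/(x_1\cdots x_l - \phi^* b))$; writing $\phi^* b = u\pi^e$ with $\pi$ a uniformiser of $\mathcal{T}$, $u$ a unit, and $e = \text{ord}(\phi^* l(C))$, I may rescale $x_1$ to assume the chart is $x_1\cdots x_l = \pi^e$. Here $1\leq e <\infty$: the generic point lands in $U$, where the fibre is smooth, so $\phi^* b\neq 0$, while $b\in m_s$ forces $e\geq 1$. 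This singularity is a complete intersection regular in codimension one, hence normal. Because the irreducible components and strata of $X_s$ are geometrically irreducible by \hyperref[compsareirr]{Lemma \ref*{compsareirr}} and $k(s)$ is separably closed, the components of the special fibre of $X_{\mathcal{T}}$ are indexed by the vertices of $\Gamma_s$, and the fibral Weil divisor attached to $m$ is $D = \sum_v m(v)X_v$. The assertion is thus that this Weil divisor is Cartier if and only if $m$ is $\mathcal{T}$-Cartier. Cartierness may be checked at closed points of the special fibre by Lemma 5.8 of \cite{holmes} and, working on the \'{e}tale chart, by Lemma 2.9 of \cite{holmes}; at a smooth point only one component passes, so $D$ is locally $m(v)\cdot V(\pi)$ and automatically Cartier. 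It remains to treat the singular points.

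For the forward implication I need the condition only at the $1$-strata, since the $\mathcal{T}$-Cartier property is a statement about edges. At a closed point of a $1$-stratum the chart is $x_1 x_2 = \pi^e$, and the computation of Holmes (Lemma 5.7 of \cite{holmes}) identifies the fibral principal divisors with the monomials $\pi^c x_1^{b_1} x_2^{b_2}$. Using $\text{ord}_{C_i}(x_j) = e\delta_{ij}$ and $\text{ord}_{C_i}(\pi) = 1$, a Cartier $D$ has local multiplicities $m(v_1) = c + e b_1$ and $m(v_2) = c + e b_2$, whence $e\mid m(v_1)-m(v_2)$. This is exactly the edge condition, so $D$ Cartier implies $m$ is $\mathcal{T}$-Cartier.

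For the converse I must exhibit a local generator at every singular point, including those in the deeper strata where $l>2$. Fix a closed point $x$ in the $(l-1)$-stratum with chart $x_1\cdots x_l = \pi^e$; the components through $x$ are $C_1,\ldots,C_l$ with multiplicities $m(v_1),\ldots,m(v_l)$. For each pair $i\neq j$ the locus $\{x_i = x_j = 0\}$ is a genuine $1$-stratum meeting exactly $C_i$ and $C_j$, and since the label is constant on connected components of $\text{Sing}_f(X_s)$ (Lemma 3.2 and 3.4 of \cite{li}), even though $l$ is not, this $1$-stratum carries the same label, of order $e$ after pullback. Hence the edge $\{v_i,v_j\}$ gives $e\mid m(v_i)-m(v_j)$ for every pair, so all the $m(v_i)$ are congruent to a common residue $c$ modulo $e$. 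Writing $m(v_i)=c+eb_i$, the function
\[
f = \pi^{c}\prod_{i=1}^{l} x_i^{b_i}
\]
satisfies $\text{ord}_{C_j}(f) = c + eb_j = m(v_j)$ for each $j$ and is a unit away from the special fibre, so $\text{div}(f) = D$ near $x$ and $D$ is Cartier there. Combining the two implications gives the lemma.

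The main obstacle is the local analysis at the deepest strata: one must control precisely which fibral Weil divisors on the higher-dimensional toric singularity $x_1\cdots x_l = \pi^e$ are Cartier. The essential inputs are its normality (so that the $\text{ord}_{C_i}$ are honest discrete valuations and $D$ is recovered from its multiplicities), the constancy of the label along connected components of the singular locus (so that all $\binom{l}{2}$ adjacent edges record the order-$e$ condition even though the number of branches jumps), and the fact that the extra smooth coordinates $x_{l+1},\ldots,x_n$ do not affect the class group since $\text{Cl}(A)=\text{Cl}(A[t])$. Holmes treats only $l=2$; the new content is that the edgewise congruences, which is all the $\mathcal{T}$-Cartier condition records, already suffice to write down the explicit monomial generator $f$ at the multi-branch points.
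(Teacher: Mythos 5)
Your argument is correct, and its overall shape --- reduce via Lemmas 5.8 and 2.9 of \cite{holmes} to closed points of the closed fibre, pass to a chart $x_1\cdots x_l=\pi^e$ over $\mathcal{T}$, and compute with $\operatorname{ord}_{C_i}(x_j)=e\delta_{ij}$ and $\operatorname{ord}_{C_i}(\pi)=1$ --- is the same as the paper's. The substantive difference is in the converse, where your version is the complete one. The paper writes $g^*D=\sum_i m_i w_i$ and asserts that $\prod_i x_i^{m_i}$ cuts out $g^*D$; since $x_i$ vanishes along $w_i$ to order $f=\operatorname{ord}\phi^*b$ rather than to order $1$, that monomial actually has divisor $f\sum_i m_i w_i$, and as written the paper's converse never invokes the $\mathcal{T}$-Cartier hypothesis at all. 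Your generator $\pi^{c}\prod_i x_i^{b_i}$ with $m(v_i)=c+eb_i$ is the right one, and the point you isolate --- that the label, hence the modulus $e$, is constant on the connected component of $\operatorname{Sing}_f(X_s)$, so the pairwise edge congruences at an $(l-1)$-stratum point all share a single modulus and force a common residue $c$ --- is exactly what is needed at the multi-branch points, which have no analogue for curves (when two local branches lie over the same global component the edge degenerates to a loop and the congruence is vacuous, which your formula accommodates since both branches then receive the same multiplicity). One small correction: the structure result you attribute to Lemma 5.7 of \cite{holmes} (that fibral principal divisors at a point of a $1$-stratum are, up to units, the monomials $\pi^{c}x_1^{b_1}x_2^{b_2}$) is Theorem 4.1 of \cite{holmes}, the reference this paper uses for the same purpose in \hyperref[formofCartier]{Lemma \ref*{formofCartier}}.
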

\begin{proof}
Let $\mathcal{T} = \text{Spec}(A)$ have uniformiser $\pi$ and standard valuation $\text{ord}$ on elements of $A$. 

Assume first that we have a vertical Cartier divisor $D$ on $X_{\mathcal{T}}$ with associated vertex labelling $m$. Let $e$ be an edge of $\Gamma_s$, incident with vertices $v_1$ and $v_2$ and with label $(b)\subset \mathcal{O}_{S,s}$, and let $x\in X_s$ be a closed point in the $1$-strata associated to $e$. By the existence of local charts we see that the $k(s)$-rational points of the $1$-strata are dense in the $1$-strata, and so we may assume that $x$ is $k(s)$-rational. 

Let $U\rightarrow \text{Spec}(R[x_1,\ldots, x_n]/(x_1 x_2 - b)$ be a local chart of $x$. Via base change over $S$ we obtain a local chart $U'\rightarrow \text{Spec}(A[x_1,\ldots, x_n]/(x_1x_2 - \pi^f)$ of an $(A/\pi)$-rational point $x'\in X_{\mathcal{T}}$ lying over $x\in X$. Here $f = \text{ord}\phi^*b$. 

In particular, as $x'$ is a rational point in the special fibre, the completion of the \'{e}tale local ring of $x'$ in $X_{\mathcal{T}}$ is isomorphic to $\hat{A}[[x_1, \ldots, x_n]]/(x_1x_2 - \pi^f)$. As $\text{Spec}(\hat{A}[[x_1, \ldots, x_n]]/(x_1x_2 - \pi^f))$ is flat over $X_{\mathcal{T}}$, $D$ pulls back to a vertical Cartier divisor on $\text{Spec}(\hat{A}[[x_1, \ldots, x_n]]/(x_1x_2 - \pi^f))$. The irreducible components of the special fibre of $\text{Spec}(\hat{A}[[x_1, \ldots, x_n]]/(x_1x_2 - \pi^f))$ are defined by ideals of the form $(x_i, \pi)$ for $1\leq i\leq 2$, and the function $x_i$ vanishes on $(x_i, \pi)$ to order $f$. After possibly reordering the $x_i$'s, we may assume that $(x_1,\pi)$ lies over $v_1$ and $(x_2, \pi)$ lies over $v_2$. From this we see that $f | m(v_1) - m(v_2)$.

Conversely, let $m$ be a $\mathcal{T}$-Cartier vertex labelling. Let $D$ be the associated Weil divisor on $X_\mathcal{T}$. By adding or subtracting a multiple of $\text{div}\pi$ we may assume that $D$ is effective and that at least one of the $v_i$'s has coefficient 0. By Lemma 5.8 of \cite{holmes}, to show $D$ is Cartier it suffices to show it is Cartier at closed points in the closed fibre, and in particular at points in the non-smooth locus. Let $x$ be such a point, and fix a local chart $h: U\rightarrow \text{Spec}(A[x_1,\ldots, x_n]/(x_1\cdots x_l - \pi^f)$, where $g:(U,x')\rightarrow (X,x)$ is an \'{e}tale neighbourhood of $x$. By Lemma 2.9 of \cite{holmes}, to show $D$ is Cartier at $x$ it suffices to show $g^*D$ is Cartier at $x'$. 

Let $g^*D = \sum_i m_i w_i$, where the $w_i$'s are prime Weil divisors supported in the special fibre of $U$. We may assume each $w_i$ contains $x'$ by choosing a smaller neighbourhood of $x'$ as needed. 

As $h$ is \'{e}tale, each $w_i$ lies over some prime Weil divisor in the special fibre of $\text{Spec}(A[x_1,\ldots, x_n]/(x_1\cdots x_l - \pi^f)$. The irreducible components of the special fibre of $\text{Spec}(A[x_1,\ldots, x_n]/(x_1\cdots x_l - \pi^f)$ are smooth over $\text{Spec}(A/\pi)$, and so $h(w_i)\neq h(w_j)$ for $i\neq j$, and after rearranging we may write the image of $h(w_i)$ as the point defined by $(\pi, x_i)$ for $1\leq i \leq l$. Then the rational function $\Pi_{1\leq i \leq l} x_i^{m_i}$ pulls back to the divisor $g^*D$ on $U$, whence $g^*D$ and hence $D$ are vertical Cartier divisors.

\end{proof}

\subsection{Alignment and Cartier divisors}

Recall that $f:X\rightarrow S$ is said to be aligned at a geometric point $\overline{s}\rightarrow S$ if the dual graph $\Gamma_{\overline{s}}$ of $X_{\overline{s}}$ is aligned with respect to the edge-labelling $l:E\rightarrow M_{\overline{s}}$, as in Definition \ref{defofalignment}. The following two lemmas will be used in proving \hyperref[mainresult1]{Theorem \ref*{mainresult1}}. They imply the existence and non-existence of certain Cartier divisors on $X$ when $X/S$ is aligned and not aligned, respectively. 

\begin{lemma}[c.f. Lemma 5.12 of \cite{holmes}]\label{forwardimplication}
Let $(S,s)$ be the spectrum of an excellent regular strictly henselian local ring, and $f:X\rightarrow S$ a semistable morphism that is smooth over a dense open subscheme $U\subset S$. Suppose that $X/S$ is aligned at $s$, and let $\phi: \mathcal{T} \rightarrow S$ denote a non-degenerate trait through $s$. Let $m$ denote a $\mathcal{T}$-Cartier vertex labelling of $\Gamma_s$ that takes the value $0$ on some fixed vertex $v_0$. Then there exists a Cartier divisor $D$ on $X/S$, trivial over the generic point of $S$, such that $m$ is the vertex labelling corresponding to $\phi^*D$.
\end{lemma}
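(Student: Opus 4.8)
The plan is to build $D$ as an integer combination of the divisors $\text{div}(a;H)$ constructed in Section \ref{sec:four}. Each such divisor is Cartier by \hyperref[weiliscartier]{Lemma \ref*{weiliscartier}}, and since it is supported on the closures of fibre components lying over the proper closed subsets $\text{Spec}(R/a)\subset S$, it is trivial over the generic point of $S$. As Cartier divisors form a group, any $\mathbb{Z}$-linear combination of such divisors is automatically a Cartier divisor on $X/S$ that is generically trivial, so the problem reduces to the purely combinatorial one of realising the vertex labelling $m$ as an integer combination of the labellings attached to the $\text{div}(a;H)$.

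First I would compute the vertex labelling of $\phi^*\text{div}(a;H)$. Using the specialisation map together with \hyperref[componentscollide]{Lemma \ref*{componentscollide}} and \hyperref[injectivityofsp]{Lemma \ref*{injectivityofsp}}, one checks that a component $X_w$ of $X_s$ lies in the support of $\text{div}(a;H)$ precisely when its vertex $w$ belongs to $H$, and that the multiplicity obtained after pulling back along the trait is $\text{ord}(\phi^*a)$; here one uses that $R$ is a UFD, so that $\text{ord}(\phi^*a)=\sum_i m_i\,\text{ord}(\phi^*p_i)$ for the prime factors $p_i$ of $a$ with multiplicities $m_i$. Thus the labelling of $\phi^*\text{div}(a;H)$ is $\text{ord}(\phi^*a)\cdot\mathbf{1}_H$, the indicator of the vertex set $H$ scaled by $\text{ord}(\phi^*a)$. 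Since every edge $e$ in the boundary of $H$ lies in $V(a)$, its label satisfies $l(e)^n=(a)$, so $\text{ord}(\phi^*l(e))$ divides $\text{ord}(\phi^*a)$; in particular each such labelling is itself $\mathcal{T}$-Cartier, as it must be.

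It then remains to show that the labellings $\text{ord}(\phi^*a)\cdot\mathbf{1}_H$ generate the group of $\mathcal{T}$-Cartier vertex labellings vanishing at $v_0$, and this is where alignment enters decisively. Passing to coboundaries (differences across edges), I would group the edges by the commensurability relation $e\sim e'$ iff $l(e)^n=l(e')^{n'}$ for some positive integers, giving monochromatic subgraphs $\Gamma_c$. Since $X/S$ is aligned at $s$, every simple cycle of $\Gamma_s$ is monochromatic, so the cycle space of $\Gamma_s$ is the direct sum of those of the $\Gamma_c$; dualising, the lattice of coboundaries, and hence the lattice of $\mathcal{T}$-Cartier labellings modulo constants, splits as the direct sum of the corresponding lattices for the $\Gamma_c$. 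This reduces the statement to a single commensurability class, where all labels are powers of one primitive element $c$ and the situation is formally identical to the one-dimensional case. There I would conclude as in Lemma 5.12 of \cite{holmes}: a cut is realised using $a=c^N$ with $N$ the least common multiple of the relevant exponents, so that $\Gamma_s(a)$ separates exactly the intended vertices while $\text{ord}(\phi^*a)$ equals the required least common multiple of the edge orders. The normalisation $m(v_0)=0$ is then arranged by taking all cuts $H$ to avoid $v_0$, which loses nothing since the coboundaries of the components summing to $\mathbf{1}_V$ are dependent.

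The main obstacle is this last step: proving that the scaled cuts generate the full $\mathcal{T}$-Cartier lattice integrally, not merely rationally. The decomposition by commensurability class is exactly what alignment buys us, since without it a cycle could carry incommensurable labels and produce a $\mathcal{T}$-Cartier labelling not expressible through the divisors $\text{div}(a;H)$. Turning the rational direct-sum decomposition into an integral one requires knowing that the coboundary lattice is saturated in the lattice of edge-functions, which holds because $H^1(\Gamma_s;\mathbb{Z})$ is free; and the single-class generation statement still demands the careful least-common-multiple bookkeeping on the exponents indicated above.
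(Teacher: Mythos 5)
Your strategy coincides with the paper's: both proofs realise $m$ as an integer combination of the vertex labellings $\text{ord}(\phi^*a)\cdot\mathbf{1}_H$ attached to the divisors $\text{div}(a;H)$, split the problem along commensurability classes of edge labels using alignment, and then treat a single class by writing all of its labels as powers of one primitive element of the factorial ring $R$ (your $c$ is the paper's $\alpha$, and your $a=c^N$ plays the role of the paper's $D=\text{div}(\alpha^r,H)$). The genuine difference is in how the decomposition of $m$ is obtained: the paper invokes Lemma 5.11 of \cite{holmes} wholesale, whereas you prove the cross-class splitting directly --- alignment forces every circuit to be monochromatic, so the cycle lattice is the direct sum of those of the monochromatic subgraphs, and since $H^1(\Gamma_s;\mathbb{Z})$ is free the coboundary lattice is saturated, so the restriction of the coboundary of $m$ to a single class is again the coboundary of a $\mathcal{T}$-Cartier labelling vanishing at $v_0$. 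That argument is correct and makes the role of alignment more transparent than the citation does. For the residual within-class step you, like the paper, ultimately lean on Holmes's combinatorics; one caution is that the decomposition you need there is not into constants times indicators of single components of $\Gamma_s$ minus the whole class (those need not be $\mathcal{T}$-Cartier), but into constants times indicators of components of $\Gamma_s(c^N)$ for varying $N$. Once that is arranged the bookkeeping you flag does close: if a piece equals $r\,\text{ord}(\phi^*c)$ on such a component $H$ and $0$ outside, then writing the label of a boundary edge $e$ of $H$ as $(c^{k_e})$, the $\mathcal{T}$-Cartier condition at $e$ reads $k_e\,\text{ord}(\phi^*c)\mid r\,\text{ord}(\phi^*c)$, so $k_e\mid r$ for every boundary edge, $H$ is a union of components of $\Gamma_s(c^r)$, and the corresponding sum of divisors $\text{div}(c^r;\,\cdot\,)$ realises the piece exactly.
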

Our proof will closely follow that of Lemma 5.10 and Lemma 5.12 of \cite{holmes}.
\begin{proof}
We begin by defining three labellings on the edges of $\Gamma_s$. The first labelling, denoted $l_{orig}$, is the usual labelling by the monoid $M_s$ of principal ideals of $\mathcal{O}_{S,s}$.

The second labelling is the quotient of $l_{orig}$ by the equivalence relation that $[a]=[b]$ if and only if $(a)^n = (b)^m$ as ideals of $\mathcal{O}_{S,s}$ for some positive integers $n$ and $m$. This labelling is denoted by $l_Q$, where $Q$ denotes the quotient of $M_s$ this equivalence relation.

The final labelling on the edges is determined by sending a label in $M_s$ to $\text{ord} \phi^*(l(e)) \in \mathbb{N}\cap \{\infty\}$, where $\text{ord}$ denotes the usual valuation on $\Gamma(\mathcal{T})$. This labelling is denoted by $l_{\mathcal{T}}$. By \hyperref[cartieriffTcartier]{Lemma \ref*{cartieriffTcartier}}, a vertex labelling is $\mathcal{T}$-Cartier with respect to $l_{\mathcal{T}}$ if and only if it arises from a vertical Cartier divisor on $X_{\mathcal{T}}$.

As $l_Q$ is constant on circuits by the assumption $X/S$ is aligned at $s$, Lemma 5.11 of \cite{holmes} implies that we may write the $\mathcal{T}$-Cartier vertex labelling $m$ as a finite sum of functions $m_i: V \rightarrow \mathbb{Z}$ such that

\begin{itemize}
\item All of the $m_i$'s take the value $0$ at $v_0$, where $v_0$ is the fixed vertex on which $m$ is $0$;
\item Each $m_i$ is $\mathcal{T}$-Cartier;
\item Each $m_i$ satisfies the following: There exists a subset $B\subset E$ of edges whose labels in $l_Q$ are the same, and such that $m_i$ is constant on connected components of the graph obtained by deleting all edges in set $B$.
\end{itemize}

Hence we may reduce to the case where $m$ satisfies the above three properties. In particular there exists an element $h\in Q$ such that $m$ is constant on connected components of $\Gamma_s$ obtained by deleting every edge with label $h$. By definition of $Q$, there exists an element $a\in \mathcal{O}_{S,s}$ such that for each edge $e$ with $l_Q(e) = h$, we have $(a) = (l_{orig}(e))^{n_e}$ for some positive integer $n_e$ depending on $e$. 

As $S$ is factorial, we may decompose $a$ into a product of prime elements to find that  there exists some element $\alpha \in \mathcal{O}_{S,s}$ such that $l_{orig}(e)$ is a power of $\alpha$ for each $e$ with label $l_Q(e)=h$. Because $m$ is constant on connected components of $\Gamma_s$ obtained by deleting all edges with $l_Q(e)=h$, we have that $\text{ord}\phi^*(\alpha)$ divides $l_{\mathcal{T}}(e)$ for every edge $e$ of $\Gamma_s$. Moreover, as $m$ takes the value $0$ at at least one vertex, we conclude that $\text{ord}\phi^*(\alpha)$ divides $m(v)$ for all $v$. 

Let $H$ be a connected component of $\Gamma_s$ with edges of label $l_Q(e) = h$ removed on which $m$ is non-zero. Then $m(v) = r\text{ord}\phi^*(\alpha)$ on all vertices of $H$ for some $r>0$, and we may assume by the decomposition of $m$ into the $m_i$'s as above that $m$ is $0$ outside of $H$. 

Let $D = \text{div}(\alpha^r,H)$. This is Cartier by \hyperref[weiliscartier]{Lemma \ref*{weiliscartier}}, trivial over the generic point by construction. We shall show that the vertex labelling associated to $\phi^*D$ is $m$, concluding the proof.

To see this, note that $\phi^*D$ is $0$ outside of $H$ by construction. The proof of \hyperref[weiliscartier]{Lemma \ref*{weiliscartier}} showed that $\phi^*D$ is cut out by $\alpha^r$ near the generic points of vertices in $H$, and hence that

\[\text{ord}\phi^*\alpha^r = r\text{ord}\phi^* \alpha = m(v)\]
on vertices of $H$, as required.
\end{proof}

\begin{lemma}[c.f. Lemma 5.13 of \cite{holmes}]\label{reverseimplication}
Let $S$ be an excellent regular strictly Henselian local scheme with closed point $s$, $U\subset S$ a dense open subscheme, and $X\rightarrow S$ a semistable morphism that is smooth over $U$. If $X/S$ is not aligned at some $s\in S$, then for every non-degenerate trait $\phi:\mathcal{T}\rightarrow S$ through $s$ we can find a Cartier divisor $D$ on $X_{\mathcal{T}}$, trivial over the generic point of $\mathcal{T}$, such that there does not exist a Cartier divisor $E$ on $X/S$, trivial over $U$, with $\phi^*E$ linearly equivalent to $D$.
\end{lemma}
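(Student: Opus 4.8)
The plan is to mirror the proof of Lemma 5.13 of \cite{holmes}, using the specialisation maps of Section \ref{sec:three} to transport the combinatorial obstruction to the higher-dimensional setting. First I would unwind the failure of alignment: since $X/S$ is not aligned at $s$ there is a cycle $C$ in $\Gamma_s$ and two edges $e_1,e_2\in C$, with labels $(b_1),(b_2)$, such that $(b_1)^{n}\neq(b_2)^{m}$ for all $n,m\geq 1$. As $R$ is regular local, hence factorial, this says exactly that the exponent vectors of $b_1$ and $b_2$ in their prime factorisations are not proportional. Because $X/S$ is smooth over $U$, each $b_i$ is a unit on $U$ but a non-unit at $s$, so $f_i:=\text{ord}\,\phi^*(b_i)$ is a finite positive integer. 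I then construct $D$ by specifying a vertex labelling $m$ of $\Gamma_s$ that is constant off $C$ and on the two arcs of $C$, with a single jump of $\text{lcm}(f_1,f_2)$ across $e_1$ balanced by $-\text{lcm}(f_1,f_2)$ across $e_2$ and zero jump across every other edge. Each jump is divisible by the relevant $f_i$, so $m$ is $\mathcal{T}$-Cartier and by \hyperref[cartieriffTcartier]{Lemma \ref*{cartieriffTcartier}} is the labelling of a fibral Cartier divisor $D$ on $X_{\mathcal T}$; being fibral, $D$ is trivial over the generic point of $\mathcal T$.

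Next I would argue by contradiction: suppose $E$ is a Cartier divisor on $X/S$, trivial over $U$, with $\phi^*E\sim D$. Then $E$ realises $m$ on the components of $X_s$ up to a global constant, so it suffices to show that the winding of $m$ around $C$ cannot be produced by such an $E$. As a Weil divisor trivial over $U$, $E$ is supported on the fibres over the height-one primes $p\subset R$ contained in $S\setminus U$; since the components of $X_s$ themselves have codimension $\dim S$, the only prime divisors available are the closures of components of the fibres $X_{\eta_p}$ over these ``bad'' primes. Localising at $\eta_p$ (a discrete valuation ring, as $S$ is regular) and using the specialisation map $\text{sp}:\Gamma_s\to\Gamma_{\eta_p}$ together with \hyperref[componentscollide]{Lemma \ref*{componentscollide}} and \hyperref[injectivityofsp]{Lemma \ref*{injectivityofsp}}, the contribution of the $p$-part of $E$ to the winding is a coboundary pulled back along $\text{sp}$: it is nonzero across an edge $e$ only when $p\mid b_e$, and is constant on the fibres of $\text{sp}$. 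Imposing that $E$ be Cartier over all of $S$, and not merely over each $\eta_p$, couples these contributions exactly along the equivalence classes of labels used in \hyperref[forwardimplication]{Lemma \ref*{forwardimplication}}, so that the realisable windings are precisely the $\mathbb{Z}$-span of the windings of the divisors $\text{div}(\alpha^r;H)$ of \hyperref[weiliscartier]{Lemma \ref*{weiliscartier}}.

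Finally I would derive the contradiction combinatorially. Each generator $\text{div}(\alpha^r;H)$ has a winding supported on the edges whose label is a power of a single element $\alpha$, and this winding is itself a coboundary; hence writing $m$ as such a sum would partition its jumps according to these label classes, with each class-part summing to zero around $C$. Since $e_1$ and $e_2$ lie in distinct classes, matching the jumps of $D$ across $e_1$ and $e_2$ would force the labels $(b_1)$ and $(b_2)$ to become powers of a common element, i.e. $(b_1)^{n}=(b_2)^{m}$ for some $n,m\geq 1$ --- contradicting non-alignment. I expect the main obstacle to be the containment asserted in the previous paragraph: controlling, via the specialisation maps and the node-local analysis of \hyperref[weiliscartier]{Lemma \ref*{weiliscartier}}, exactly which windings around $C$ arise from globally Cartier divisors trivial over $U$. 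This is the necessity counterpart of the sufficiency established in \hyperref[forwardimplication]{Lemma \ref*{forwardimplication}}, and once it is in place the non-aligned cycle furnishes the required $D$ just as in \cite{holmes}. The purely graph-theoretic step --- that a winding with essential jumps across two edges of different classes is not a sum of single-class coboundaries --- I would handle by the combinatorial analysis underlying Lemma 5.11 of \cite{holmes}, which the non-proportionality of the exponent vectors makes applicable.
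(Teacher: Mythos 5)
Your overall strategy --- extract a non-aligned cycle, build a fibral divisor $D$ on $X_{\mathcal T}$ from a $\mathcal T$-Cartier vertex labelling, and show that any global $E$ with $\phi^*E\sim D$ would force a relation $(b_0)^{n}=(b_1)^{m}$ --- matches the paper's. But the decisive step is missing. You reduce the contradiction to the claim that the windings around $C$ realisable by Cartier divisors on $X/S$ trivial over $U$ are exactly the $\mathbb{Z}$-span of the windings of the divisors $\operatorname{div}(\alpha^r;H)$, i.e.\ to a converse classification of such divisors, and you acknowledge you do not know how to prove this containment. That containment is essentially the entire content of the lemma, and the paper never establishes any such structure theorem. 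What the paper does instead is purely local: it chooses sections $\sigma_i:S\to X$ through the smooth locus of each component $v_i$ of the cycle (via 6.2.13 of \cite{liu}), normalises so that $\sigma_0^*E=0$ and $\phi^*E=D$ exactly (using $\pi_*\mathcal O_X=\mathcal O_S$ to rule out a stray multiple of $\operatorname{div}\pi$), and then applies \hyperref[formofCartier]{Lemma \ref*{formofCartier}} --- the statement that near a point of the $1$-strata with label $b$ a Cartier divisor trivial over $U$ has local equation $aux^my^n$, so that $\sigma_{i+1}^*E$ and $\sigma_i^*E$ differ by $\operatorname{div}(b_i^{\delta_i})$. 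Chaining these relations around the cycle and computing the exponents from the multiplicities of $D$ yields $(b_1)^{d/\operatorname{ord}_T(b_1)}=(b_0)^{d/\operatorname{ord}_T(b_0)}$ directly. You never invoke \hyperref[formofCartier]{Lemma \ref*{formofCartier}}, and without it (or the unproven spanning claim) your argument does not close.

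There is also a flaw in your construction of $D$. You prescribe $m$ by its jumps (zero across every edge except $e_1$ and $e_2$), but prescribed jump data defines a vertex labelling only if it sums to zero around \emph{every} cycle of $\Gamma_s$, not just around $C$; if another edge or path joins the two arcs of $C$, your data is inconsistent. Moreover $\operatorname{lcm}(f_1,f_2)$ only guarantees divisibility across $e_1$ and $e_2$, whereas $\mathcal T$-Cartierness requires divisibility across every edge where $m$ jumps. The paper sidesteps both problems by taking $D=dv_1$ with $d$ the product of $\operatorname{ord}_T(\phi^*b_l)$ over \emph{all} distinct edge labels: this is manifestly a vertex labelling and its jumps across all edges incident to $v_1$ are $\pm d$, divisible by everything. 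Your construction is repairable along these lines, but as written it does not produce a Cartier $D$.
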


\begin{proof}
Let $\Gamma_s$  be the dual graph of $X_s$, and let $\phi: \mathcal{T}\rightarrow S$ be a non-degenerate trait through $s$. By assumption there exists a circuit with vertices $v_0, v_1, \ldots, v_N$, in order, with edge $e_i$ incident with $v_i$ and $v_{i+1}$ (taken modulo N) in $\Gamma_s$, and where if $(b_i)$ is the label of $e_i$, then (without loss of generality) $(b_0)^{k_0} \neq (b_1)^{k_1}$ for all $(k_0, k_1)\in \mathbb{N}^2$.

Let $d$ be the product of all the values of $\text{ord}_T(\phi^*b_l)$ as $b_l$ runs over the distinct edge labels of $\Gamma_s$. Set $D = dv_1$, which is a Cartier divisor on $X_{\mathcal{T}}$ by \hyperref[cartieriffTcartier]{Lemma \ref*{cartieriffTcartier}}. Suppose there exists a Cartier divisor $E$ on $X$ such that $\phi^*E$ is linearly equivalent to $D$. We shall show that this implies the existence of a multiplicative relation between $b_0$ and $b_1$, thereby deriving a contradiction.

By 6.2.13 of \cite{liu}, there exists sections $\sigma_i: S \rightarrow X$ through the smooth locus of $v_i$ for all $v_i$ in the circuit. Here we view $v_i$ as both a vertex of $\Gamma_s$ and the corresponding irreducible component of $X_s$. Let $(\sigma_0)_{\mathcal{T}}$ denote the section from $\mathcal{T}$ to $X_{\mathcal{T}}$ induced by $\sigma_0$. Because $D$ is generically trivial on $X_0$, $(\sigma_0)^*_{\mathcal{T}}D=0$. As $S$ is regular, $\sigma_0^*E$ is a Cartier divisor, and the same is true for $\sigma_i^*E$ for all $i$. We may assume that $\sigma_0^*E=0$ after multiplying by a suitable Cartier divisor.

As $\pi_*\mathcal{O}_X = \mathcal{O}_S$ (by Exercise 9.3.11 of \cite{fantechi}) we then have $\phi^*E=D$, as otherwise $\phi^*E - D$ is a multiple of $\text{div}\pi$ for $\pi$ the uniformzer of $\Gamma(\mathcal{T})$, contradicting the fact that $\sigma_0^*E = 0$ and $(\sigma_0)^*_{\mathcal{T}}D = 0$.

Let $f_i\in \mathcal{O}_{S,s}$ be such that $\text{div}(f_i) = \sigma_i^*E$. Given $a, b \in \mathcal{O}_{S,s}$, we shall write $a\sim b$ if $a$ and $b$ differ by a unit. Thus $f_0\sim 1$. By \hyperref[formofCartier]{Lemma \ref*{formofCartier}}, we have $f_1\sim b_0^{d_0} f_0$ for some $d_0\in \mathbb{Z}$, whence $d_0 = d/\text{ord}_T(b_0)$, and so we can equivalently write $f_1\sim (b_0)^{\frac{d}{\text{ord}_T(b_0)}}f_0$. Similarly, $f_2 \sim (b_1)^{-\frac{d}{\text{ord}_T(b_1)}}f_1$, and $f_2\sim \ldots \sim f_N \sim f_0$. We conclude that $(b_1)^{\frac{d}{\text{ord}_T(b_1)}} = (b_0)^{\frac{d}{\text{ord}_T(b_0)}}$, contradicting the non-alignment of $\Gamma_s$.
\end{proof}

\begin{lemma}\label{formofCartier}
Let $S$ be the spectrum of a regular strictly henselian local ring with closed point $s$, and $X\rightarrow S$ a semistable morphism. Let $v_1, \ldots, v_n$ denote the set of irreducible components of $X_s$, where for each $i$ we have a section $\sigma_i: S \rightarrow X$ passing through the smooth locus of $v_i$. Let $E$ be a Cartier divisor on $X$, trivial over a dense open subscheme $U\subset S$, and let $f_i\in \text{Frac}(\mathcal{O}_{S,s})^*$ be such that $\text{div}f_i = \sigma_i^*E$. 

Suppose there is an edge $e$ incident with $v_1$ and $v_2$ in the dual graph of $X_s$. Let $b\in \mathcal{O}_{S,s}$ be the label of the associated $1$-strata. Then, up to multiplication by units in $\mathcal{O}_{S,s}$, we have $f_1 = b^{\delta}f_2$ for some $\delta \in \mathbb{Z}$.
\end{lemma}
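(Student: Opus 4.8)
The plan is to reduce the assertion to an equality of fractional ideals $(f_1) = (b)^{\delta}(f_2)$ in $R := \mathcal{O}_{S,s}$. Since $R$ is a regular local ring it is factorial, so such an equality may be checked one height-one prime at a time; writing $b = u\prod_{j} \pi_j^{a_j}$ for its prime factorisation, it suffices to produce a single integer $\delta$ with
\[
v_{\pi_j}(f_1) - v_{\pi_j}(f_2) = \delta\, a_j \quad \text{for all } j, \qquad v_{\mathfrak{p}}(f_1) = v_{\mathfrak{p}}(f_2) \quad \text{for all height-one } \mathfrak{p}\nmid b,
\]
where $v_{\mathfrak p}$ is the order at the height-one prime $\mathfrak p$. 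Throughout I use that $E$ is trivial over $U$, so every prime component of $E$ maps into $S\setminus U$; in particular $E$ is fibral (``vertical'').

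For a fixed height-one prime $\mathfrak p=(\pi)$ I first compute $v_{\pi}(f_i)$ geometrically. As $\sigma_i$ passes through the smooth locus of $v_i$, the morphism $X\to S$ is smooth at $\sigma_i(s)$, so near $\sigma_i(s)$ the fibral divisor $E$ is a pullback of a divisor on $S$ and $\sigma_i$ meets each vertical component transversally. Generising $s$ to $\mathfrak p$ carries $v_i$ to the component $\text{sp}_{s\to\mathfrak p}(v_i)$ of $X_{\mathfrak p}$ (the specialisation map of Section \ref{sec:three}), whose closure $D_i^{\mathfrak p}$ is the unique vertical prime divisor over $V(\mathfrak p)$ met by $\sigma_i$; hence $v_{\pi}(f_i)$ equals the multiplicity $m_i^{\mathfrak p}$ of $E$ along $D_i^{\mathfrak p}$. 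If $\mathfrak p\nmid b$ then $b$ is a unit over $\mathfrak p$, so \hyperref[componentscollide]{Lemma \ref*{componentscollide}} gives $\text{sp}_{s\to\mathfrak p}(v_1)=\text{sp}_{s\to\mathfrak p}(v_2)$; the two sections then meet the \emph{same} divisor and $v_{\mathfrak p}(f_1)=v_{\mathfrak p}(f_2)$. This settles every prime not dividing $b$ and reduces the lemma to showing $m_1^{(j)} - m_2^{(j)} = \delta a_j$ for a $\delta$ independent of $j$, where $m_i^{(j)}:=m_i^{(\pi_j)}$, $D_i^{(j)}:=D_i^{(\pi_j)}$.

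The heart of the matter is a local computation at the generic point $\xi_C$ of the $1$-stratum $C$ joining $v_1$ and $v_2$. A local chart at $\xi_C$ has the shape $x_1x_2=b$ (a $1$-stratum carries $l=2$), so, writing $A$ for the étale-local ring at $\xi_C$, one has $\text{Frac}(A)=\text{Frac}(B)(x_1)$ for a regular local ring $B\supseteq R$ in which each $\pi_j$ remains prime; moreover $D_1^{(j)}$ (resp. $D_2^{(j)}$) is the height-one prime $(x_1,\pi_j)$ (resp. $(x_2,\pi_j)$) of $A$, and a direct order computation gives $\text{div}_A(x_1)=\sum_j a_j D_1^{(j)}$, $\text{div}_A(x_2)=\sum_j a_j D_2^{(j)}$ and $\text{div}_A(\pi_j)=D_1^{(j)}+D_2^{(j)}$ (the last using that semistable fibres are reduced). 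Being Cartier, $E$ has a local equation $G\in\text{Frac}(A)^{*}$ at $\xi_C$, which I factor over the field $\text{Frac}(B)$ as $G=c\prod_k g_k(x_1)^{n_k}$ with $c\in\text{Frac}(B)^{*}$ and the $g_k$ monic irreducible in $\text{Frac}(B)[x_1]$. Any factor $g_k\neq x_1$ passing through $\xi_C$ cuts out a divisor finite and dominant over $\text{Spec}(B)$, hence \emph{horizontal}; since $E$ is vertical, all such $n_k$ vanish, leaving $G=(\text{unit})\cdot c\cdot x_1^{\delta}$ for some $\delta\in\mathbb{Z}$. Substituting into the divisor formulas yields $m_1^{(j)}=v_{\pi_j}(c)+\delta a_j$ and $m_2^{(j)}=v_{\pi_j}(c)$, so $m_1^{(j)}-m_2^{(j)}=\delta a_j$ with the same $\delta$ for every $j$. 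Together with the previous paragraph this gives $(f_1)=(b)^{\delta}(f_2)$, i.e. $f_1=b^{\delta}f_2$ up to a unit.

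The step I expect to be the main obstacle is the \emph{uniformity} of $\delta$ across the primes $\pi_j$: a priori the differences $m_1^{(j)}-m_2^{(j)}$ might depend on $j$, and the content of the lemma is exactly that they are all proportional to the $a_j$ with one common constant. This is forced by the rational structure $\text{Frac}(A)=\text{Frac}(B)(x_1)$ together with verticality, which pins the local equation to a single power of $x_1$ modulo $\text{Frac}(B)^{*}$ and units; the delicate point to get right is the assertion that a monic factor $g_k\neq x_1$ meeting $\xi_C$ genuinely defines a horizontal prime divisor, so that verticality eliminates it. The remaining verifications — the orders of $x_1,x_2,\pi_j$ in $A$, the transversality of $\sigma_i$ to $D_i^{\mathfrak p}$, and the reduction to height-one primes — are routine given Sections \ref{sec:two} and \ref{sec:three}.
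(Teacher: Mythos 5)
Your global bookkeeping is sound: the reduction to height-one primes of $\mathcal{O}_{S,s}$, the identification of $v_{\mathfrak p}(f_i)$ with the multiplicity of $E$ along the closure of $\operatorname{sp}_{s\to\mathfrak p}(v_i)$ via the sections $\sigma_i$, and the use of Lemma \ref{componentscollide} to dispose of the primes not dividing $b$ all work, and are arguably cleaner than what the paper writes. The gap is in the local step at $\xi_C$, exactly at the point you flag as delicate, and the flagged assertion is false. A monic irreducible $g\neq x_1$ in $\operatorname{Frac}(B)[x_1]$ that is a non-unit in $A$ need not contribute any horizontal component to $\operatorname{div}_A(g)$: take $b=\pi$ a prime of $R$ and $g=x_1-\pi$. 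Then $g=x_1-x_1x_2=x_1(1-x_2)$ with $1-x_2\in A^{*}$, so $\operatorname{div}_A(g)=\operatorname{div}_A(x_1)=D_1^{(1)}$ is purely vertical and nonzero. Applied to the vertical Cartier divisor with local equation $G=x_1-\pi$, your rule ``verticality forces $n_k=0$ for every $g_k\neq x_1$'' returns $n_1=0$, which is wrong. The underlying problem is that the factorization in $\operatorname{Frac}(B)[x_1]$ cannot see the relation $x_1x_2=b$: the dichotomy ``unit in $A$, or contributes a horizontal prime through $\xi_C$'' fails for factors whose horizontal locus misses $\xi_C$ while their vertical locus does not, and such factors exist in every degree (e.g.\ $x_1^{2}-\pi^{7}$ with $b=\pi^{3}$).

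What the argument actually requires at $\xi_C$ is that every purely vertical principal divisor of $A$ lie in the subgroup generated by $\operatorname{div}_A(x_1)=\sum_j a_jD_1^{(j)}$ and the divisors $\operatorname{div}_A(\pi)$ of the primes of $R$; equivalently, that the classes of the $D_1^{(j)}$ generate $\operatorname{Cl}(A)$ with $\sum_j a_jD_1^{(j)}$ as the only relation. That is a genuine computation of the local class group of $B[x,y]/(xy-b)$, and it is precisely the input the paper takes from Theorem 4.1 of \cite{holmes}, which writes any local equation at a point of the $1$-stratum as $a\,u\,x^{m}y^{n}$ in the completed \'etale local ring and thereby gives $f_1/f_2\sim b^{m-n}$ directly. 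Your proof becomes correct once the elimination-of-horizontal-factors step is replaced by that theorem (or by Mori's injection $\operatorname{Cl}(A)\hookrightarrow\operatorname{Cl}(\hat A)$ together with the computation for $\hat B[[x,y]]/(xy-b)$); as written, the key uniformity $m_1^{(j)}-m_2^{(j)}=\delta a_j$ with $\delta$ independent of $j$ has not been established.
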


\begin{proof}
Let $p\in X_s$ be a $k$-rational point on $v_1$ and $v_2$ lying in the $1$-strata of $X_s$ associated to $e$, and fix a local chart $U\rightarrow \text{Spec}(R[x,y, x_1, \ldots, x_n]/(xy-b))$, where $(U,x')\rightarrow (X,x)$ is an \'{e}tale neighbourhood of $x$. As $p$ is $k$-rational the image of $x'$ in $\text{Spec}(R[x,y, x_1, \ldots, x_n]/(xy-b))$ is defined by the ideal $(m_s, x,y, x_1, \ldots, x_n)$ and so the completion of the \'{e}tale local ring of $X$ at $p$ is of the form $R = \mathcal{\hat{O}}_{S,s}^{\text{\'{e}t}}[[x_1, \ldots, x_n]][[x,y]] / (xy - b)$. We may assume $E$ is effective by adding to it the pullback of some effective Cartier divisor on $S$. Locally near $p$ we have that $E$ is given by an element $r\in \text{Frac}(R)^*$. By Theorem 4.1 of \cite{holmes} we may write $r = aux^my^n$, where $a\in \mathcal{O}_{S,s}[[x_1, \ldots, x_n]]$, $u\in R^*$, and $m, n\in \mathbb{Z}_{\geq 0}$. 

Suppose without loss of generality that $y$ vanishes on $v_2$ and $x$ vanishes on $v_1$. As $y$ is generically invertible on $v_1$, at the generic point $\eta_1$ of $v_1$ we have that $E$ is defined by $ax^m$, hence by $ab^m$. Hence $\sigma_1^*E = \text{div}(ab^m)$. Similarly $\sigma_2^*E = \text{div}(ab^n)$. Thus $\sigma_1^*E = \sigma_2^*E + \text{div}b^{m-n}$.
\end{proof}

\section{The Picard scheme and alignment}\label{sec:five}
In this section we define the Picard scheme and an object $\text{Pic}^{0}_{X/S}$ that will serve as a generalisation of the Jacobian of a curve in higher dimensions. We shall then show that a generically smooth semistable morphism $X\rightarrow S$ where $S$ is an excellent regular scheme is aligned if and only if the closure of the unit section in $\text{Pic}^{[0]}_{X/S}\supset \text{Pic}^{0}_{X/S}$ is flat over $S$. This will allow us to show that $\text{Pic}^{0}_{X/S}$ permits a N\'{e}ron model if $X/S$ is aligned.

\begin{definition}
Let $S$ be a scheme, and $X$ be an $S$-scheme. The functor $P_{X/S}: \text{(Sch/S)}^0\rightarrow \text{(Sets)}$ is defined as \[P_{X/S}(T) = \text{Pic}(X\times_S T).\]
The \textit{relative Picard functor of $X$ over $S$} is the $fppf$-sheaf associated to $P_{X/S}$. It is denoted by $\text{Pic}_{X/S}$.
\end{definition}

\begin{remark}\label{formofpic}
By Proposition 8.1.4 of \cite{bosch} when $f: X\rightarrow S$ is quasi-compact and quasi-separated, satisfies $f_*(\mathcal{O}_X) = \mathcal{O}_S$ universally, and $f$ admits a section, there exists an exact sequence
\[0 \rightarrow \text{Pic}(T) \rightarrow \text{Pic}(X\times_ST) \rightarrow \text{Pic}_{X/S}(T)\rightarrow 0\] for any flat $S$-scheme $T$. Thus one obtains an easy way to represent elements of $\text{Pic}_{X/S}(T)$. The condition that $f_*(\mathcal{O}_X) = \mathcal{O}_S$ holds universally is satisfied when $f$ is proper and flat and has geometrically connected and reduced fibres by Exercise 9.3.11 of \cite{fantechi}. In particular, it holds in the case of semistable morphisms, and so in particular semistable morphisms are cohomologically flat in codimension $0$.
\end{remark}

Theorem 8.3.1 of \cite{bosch} informs us that if $f: X \rightarrow S$ is a proper, flat, finitely presented morphism of schemes that is cohomologically flat in dimension $0$, then $\text{Pic}_{X/S}$ is representable by a locally separated algebraic space locally of finite presentation over $S$. Denote by $\text{Pic}^0_{X_U/U}$ the fibre-wise connected component of the identity, and $\text{Pic}^{[0]}_{X/S}$ its closure in $\text{Pic}_{X/S}$.

\begin{theorem}\label{mainresult1}
Let $X\rightarrow S$ be a semistable morphism, where $S$ is an excellent regular scheme. Let $U\subset S$ be a dense open subscheme, and assume $X$ is smooth over $U$. Then the following are equivalent:
\begin{enumerate}
\item $X/S$ is aligned.
\item The closure of the unit section in $\text{Pic}^{[0]}_{X/S}$ is \'{e}tale over $S$.
\item The closure of the unit section in $\text{Pic}^{[0]}_{X/S}$ is flat over $S$.
\end{enumerate}
\end{theorem}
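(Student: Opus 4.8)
The plan is to fix $s\in S$ and reduce to the case where $S=\operatorname{Spec}(R)$ with $R=\mathcal{O}_{S,s}^{\mathrm{sh}}$ an excellent regular strictly henselian local ring. This is legitimate: étaleness and flatness of $\text{clo}(e)\to S$ may both be checked on the strict henselisations $\mathcal{O}_{S,s}^{\mathrm{sh}}$ for all $s\in S$, the formation of $\text{clo}(e)$ commutes with this flat base change, and alignment is a condition on geometric points by Definition \ref{defofalignment}. Since a regular local ring is a domain, $S$ is integral with generic point $\eta\in U$; write $d=\dim R$. Because $X_s$ is semistable over the separably closed field $k(s)$ its dense smooth locus has a rational point, which lifts through the smooth morphism $f$ over the henselian base to a section of $f$; hence by Remark \ref{formofpic} one has $\text{Pic}_{X/S}(T)=\operatorname{Pic}(X_T)$ for $T=S$ and for any trait $T=\mathcal{T}$ (as the Picard groups of a local ring and of a discrete valuation ring vanish). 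Thus $S$- and $\mathcal{T}$-points of $\text{Pic}_{X/S}$ are honest line bundles on the regular schemes $X$ and $X_{\mathcal{T}}$, i.e. classes of Cartier divisors, which is precisely what lets Lemmas \ref{forwardimplication} and \ref{reverseimplication} interact with $\text{clo}(e)$.

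I would then prove the cycle $(1)\Rightarrow(2)\Rightarrow(3)\Rightarrow(1)$, with $(2)\Rightarrow(3)$ immediate. The engine for the two substantial implications is a local computation at a closed point $\xi$ of the fibre $\text{clo}(e)_s$, using that $\text{clo}(e)=\overline{e(U)}$ is integral of dimension $d$, so $\dim\mathcal{O}_{\text{clo}(e),\xi}=d$. On one hand, if a section $\sigma\colon S\to\text{clo}(e)$ with $\sigma(s)=\xi$ and $\sigma|_U=e$ exists, then the retraction $R\to\mathcal{O}_{\text{clo}(e),\xi}\to R$ exhibits $\mathcal{O}_{\text{clo}(e),\xi}$ as a $d$-dimensional domain surjecting onto the $d$-dimensional domain $R$; the kernel is a minimal prime, hence zero, so $\mathcal{O}_{\text{clo}(e),\xi}\cong R$ and $\text{clo}(e)\to S$ is étale (a local isomorphism) at $\xi$. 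On the other hand, if $\text{clo}(e)\to S$ is flat at $\xi$ then the fibre dimension there equals $\dim\text{clo}(e)-\dim S=0$, so $\mathcal{O}_{\text{clo}(e),\xi}$ is quasi-finite flat local over the henselian ring $R$, hence finite flat; its generic fibre is the single reduced point $e(\eta)$ of degree one, so the rank is one and again $\mathcal{O}_{\text{clo}(e),\xi}\cong R$, producing a section through $\xi$ and étaleness at $\xi$.

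For $(1)\Rightarrow(2)$ it then suffices to produce a section through every closed point $\xi\in\text{clo}(e)_s$. Since $\xi$ lies in the closure of $e(U)$, the excellence of $S$ and Lemma \ref{traitsexist} give a non-degenerate trait $\phi\colon\mathcal{T}\to S$ with a lift $\mathcal{T}\to\text{clo}(e)$ whose closed point maps to $\xi$; the associated line bundle on $X_{\mathcal{T}}$ is trivial over the generic point, so by Lemma \ref{cartieriffTcartier} it is a fibral Cartier divisor $D$ with $\mathcal{T}$-Cartier vertex labelling $m$. Alignment at $s$ lets me invoke Lemma \ref{forwardimplication} to realise $m$ as $\phi^{*}E$ for a Cartier divisor $E$ on $X/S$ trivial over the generic point, hence over $U$; then $\mathcal{O}(E)$ defines a section $\sigma$ of $\text{clo}(e)$ with $\sigma|_U=e$ and $\sigma(s)=\mathcal{O}(E)|_{X_s}=\mathcal{O}(D)|_{X_s}=\xi$. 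By the local computation $\text{clo}(e)\to S$ is étale at each such $\xi$; the fibre is therefore $0$-dimensional, all its points are closed, and étaleness over the unique closed point $s$ together with étaleness over $U$ yields $(2)$.

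Finally I would prove $(3)\Rightarrow(1)$ by contraposition. If $X/S$ is not aligned at $s$, Lemma \ref{reverseimplication} furnishes, for a chosen trait $\phi$, a fibral Cartier divisor $D$ on $X_{\mathcal{T}}$ trivial over the generic point with no Cartier divisor $E$ on $X/S$, trivial over $U$, satisfying $\phi^{*}E\linequiv D$. Its class is a limit of the unit, so $\xi:=\mathcal{O}(D)|_{X_s}\in\text{clo}(e)_s$. Were $\text{clo}(e)\to S$ flat at $\xi$, the local computation would make it a local isomorphism near $\xi$, whence the lift of $\phi$ through $\xi$ is unique; this identifies the $\mathcal{T}$-point $[D]$ (which factors through $\text{clo}(e)$ since both its points land there) with $\sigma\circ\phi$ for the resulting section $\sigma$, giving a global $E$ with $\phi^{*}E\linequiv D$ and contradicting Lemma \ref{reverseimplication}. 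The main obstacle I anticipate is exactly this matching: knowing only that the special-fibre class $\xi$ extends to a global section is \emph{not} enough to conclude $\phi^{*}E\linequiv D$, because two line bundles on $X_{\mathcal{T}}$ agreeing on both fibres can still differ by a fibral class with nonzero labelling. Resolving it rests on the étaleness at $\xi$ (forcing uniqueness of the lift of $\phi$ over the whole trait, not merely agreement on the special fibre) together with the bookkeeping of labellings supplied by Lemma \ref{cartieriffTcartier}.
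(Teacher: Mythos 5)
Your overall strategy is the same as the paper's: reduce to $S$ strictly henselian local, translate $S$- and $\mathcal{T}$-points of $\closure{}{e}$ into Cartier divisors on $X$ and $X_{\mathcal{T}}$ via Remark \ref{formofpic}, and then run \hyperref[forwardimplication]{Lemma \ref*{forwardimplication}} and \hyperref[reverseimplication]{Lemma \ref*{reverseimplication}} against the equivalence ``flat $\Leftrightarrow$ \'{e}tale $\Leftrightarrow$ sections exist through every point of the closed fibre'' for a morphism that is an isomorphism over $U$. Your resolution of the final matching issue in $(3)\Rightarrow(1)$ (\'{e}taleness at $\xi$ makes $\closure{}{e}\to S$ a local isomorphism there, so the lift of $\phi$ is unique over all of $\mathcal{T}$ and not just on the special fibre, since $\mathcal{T}$ is local and reduced) is exactly the paper's ``$\phi$ factors through $f:T\to S$'' step.

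The genuine gap is in your ``engine.'' The theorem does not assume $X/S$ projective, so by Theorem 8.3.1 of \cite{bosch} the object $\on{Pic}_{X/S}$ --- and hence $\closure{}{e}$ --- is a priori only a \emph{locally separated algebraic space}, not a scheme. Your local computation freely uses closed points of $\closure{}{e}_s$, the local rings $\mathcal{O}_{\closure{}{e},\xi}$, and in particular the step ``quasi-finite flat local over the henselian ring $R$, hence finite flat,'' which is the structure theorem for quasi-finite morphisms of \emph{schemes} over a henselian local base. None of this is available until you know that every point of the special fibre of $\closure{}{e}$ admits a scheme (indeed affine) neighbourhood. This is precisely what the paper's \hyperref[droppingproj]{Lemma \ref*{droppingproj}} supplies: it first shows $\closure{}{e}\to S$ is locally quasi-finite, then invokes Lemma (C.2) of \cite{rydh} (valid for locally separated, locally quasi-finite algebraic spaces over a strictly henselian base) to produce affine neighbourhoods, and only then applies the scheme-level criterion of \hyperref[sectionsofseparated]{Lemma \ref*{sectionsofseparated}}. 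Without this devissage your argument proves the theorem only under an additional representability hypothesis (e.g.\ $X/S$ projective). Two smaller points: your assertion $\dim\mathcal{O}_{\closure{}{e},\xi}=d$ should be justified by the dimension formula for the integral scheme $\closure{}{e}$ dominating $S$ with zero-dimensional generic fibre (using that $S$ is excellent, hence universally catenary); and in $(1)\Rightarrow(2)$, after getting \'{e}taleness at all \emph{closed} points of $\closure{}{e}_s$ you should say explicitly that the non-\'{e}tale locus is closed and meets no closed point of the fibre, hence is empty. Both are routine once the scheme-neighbourhood issue is settled.
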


First we give a useful lemma, which itself uses the following result of Holmes:

\begin{lemma}[Lemma 5.17 of \cite{holmes}]\label{sectionsofseparated}
Let $S$ be a Noetherian scheme and $U\subset S$ dense open. Let $f: X \rightarrow S$ be a morphism of schemes locally of finite type and which is an isomorphism over $U$, and such that $f^{-1}U$ is schematically dense in $X$. Let $x\in X$ be a point. The following are equivalent:
\begin{enumerate}
\item $f$ is \'{e}tale at $x$;
\item $f$ is flat at $x$;
\item there exists an open neighbourhood $V$ of $f(x)$ in $S$ and a section $\sigma: V \rightarrow X$ through $x$.
\end{enumerate}
\end{lemma}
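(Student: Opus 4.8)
The organising principle I would use is that each of the three conditions is equivalent to the single statement that $f$ is an open immersion onto an open neighbourhood of $f(x)$; once this is set up the lemma follows formally. Indeed, if $f$ is an open immersion near $x$ then it is \'{e}tale and flat there, and the inverse of $f$ over its (open) image, which contains $f(x)$, supplies the section demanded by the third condition. So the real work is to show that each of the three conditions forces $f$ to be an open immersion near $x$, and the two standing hypotheses — that $f$ is an isomorphism over $U$ and that $f^{-1}(U)$ is schematically dense in $X$ — are exactly what make this possible. The implication from the first condition to the second is immediate, since \'{e}tale morphisms are flat, so it suffices to treat the second and third conditions.

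For the third condition I would argue directly. A section $\sigma\colon V\to X$ of $f$ is automatically an immersion, being the graph of a morphism (equivalently, a base change of the diagonal of $f$); after shrinking $V$ we may regard $\sigma(V)$ as a closed subscheme of an open $W\subseteq f^{-1}(V)$ containing $x$. Over $V\cap U$ the morphism $f$ is an isomorphism, so $\sigma(V\cap U)=f^{-1}(V\cap U)$, which lies in $W$; since $f^{-1}(U)$ is schematically dense in $X$, this is a schematically dense open subscheme of $W$. A closed subscheme of $W$ containing a schematically dense open subscheme must coincide with $W$, whence $\sigma(V)=W$ and $f\colon W\to V$ is an isomorphism. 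Thus $f$ is a local isomorphism at $x$, in particular flat and \'{e}tale there.

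The substantial implication is that flatness at $x$ forces $f$ to be an open immersion. As $S$ is Noetherian, $f$ is locally of finite presentation, so flatness propagates to a neighbourhood of $x$. The plan is to show that $f$ is a \emph{monomorphism} near $x$ and then invoke the standard fact that a flat, locally of finite presentation monomorphism is an open immersion. To produce the monomorphism I would pass to the diagonal $\Delta_f\colon X\to X\times_S X$, which is an immersion. The second projection $\on{pr}_2\colon X\times_S X\to X$ is the base change of $f$, hence flat in a neighbourhood of $(x,x)$; consequently the open subscheme $(X\times_S X)_U=\on{pr}_2^{-1}(f^{-1}(U))$ is schematically dense near $(x,x)$, because a flat morphism carries the associated points of its source into those of its target, and all associated points of $X$ lie in the schematically dense $f^{-1}(U)$. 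Over $U$ the isomorphism $f$ makes $\Delta_f$ restrict to an isomorphism onto $(X\times_S X)_U$, so the locally closed subscheme $\Delta_f(X)$ contains this schematically dense open subscheme and therefore agrees with the ambient scheme near $(x,x)$. Hence $\Delta_f$ is an open immersion at $(x,x)$, i.e.\ $f$ is a monomorphism near $x$, and the cited fact upgrades this to an open immersion.

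The main obstacle, and the step I would treat most carefully, is precisely this last implication: checking that flat pullback preserves the schematic density of $f^{-1}(U)$ near the diagonal point, and hence that $f$ is a monomorphism there. This is where both hypotheses are genuinely used, and one must keep track of the fact that $f$ is assumed flat only at $x$, so that the flatness of $\on{pr}_2$ and the schematic density statement hold only in a neighbourhood of $(x,x)$ rather than globally. With this in place, the open-immersion description yields all three equivalences at once.
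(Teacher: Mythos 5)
The paper does not actually prove this statement: it is imported wholesale as Lemma~5.17 of \cite{holmes}, so there is no internal proof to measure your argument against. Taken on its own terms, your proof is correct and self-contained. The reduction of all three conditions to ``$f$ is an open immersion near $x$'' is clean; the treatment of $(3)$ via schematic density of $\sigma(V\cap U)=f^{-1}(V\cap U)$ inside the open set where $\sigma(V)$ is closed is fine; and the main implication $(2)\Rightarrow$ open immersion, via showing $f$ is a monomorphism near $x$ and then invoking that a flat, locally finitely presented monomorphism is an open immersion, works. Two points should be written out in full rather than asserted. First, the step from ``$\Delta_f$ is an open immersion at $(x,x)$'' to ``$f$ is a monomorphism on a neighbourhood of $x$'' requires shrinking $X$ to an open $X'\ni x$ with $X'\times_S X'$ contained in the open set $W$ on which you have shown $\Delta_f(X)\cap W=W$; such an $X'$ exists because opens of the form $X_1\times_S X_2$ are a basis near $(x,x)$, but without this choice the surjectivity of the diagonal of $f|_{X'}$ is not literally established. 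Second, the assertion that a flat morphism carries associated points into associated points is the Noetherian formula $\on{Ass}(Y)=\bigcup_{x\in\on{Ass}(X)}\on{Ass}(Y_x)$ for $Y$ flat and locally of finite type over a locally Noetherian $X$, together with the equivalence (again under local Noetherianity) between scheme-theoretic density of an open and its containing all associated points; this is where the hypothesis that $S$ is Noetherian is genuinely consumed, and it deserves an explicit citation. With those details supplied the argument is complete.
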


\begin{lemma}\label{droppingproj}
Let $S=\text{Spec}(R)$ be Noetherian strictly henselian local scheme with closed point $s$, and let $U\subset S$ a dense open subscheme. Let $f:X\rightarrow S$ be a morphism of algebraic spaces that is locally separated and of finite type, and suppose further that $X/S$ is an isomorphism over $U$. Then the following are equivalent:
\begin{enumerate}
\item $f$ is \'{e}tale at points of the fibre $X_s$.
\item $X/S$ is flat at points of the fibre $X_s$.
\item For every point $x$ in the special fibre there exists a section $\sigma:S\rightarrow X$ through $x$.
\end{enumerate}
\end{lemma}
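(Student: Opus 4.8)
The plan is to reduce the statement to its scheme-theoretic analogue, \hyperref[sectionsofseparated]{Lemma \ref*{sectionsofseparated}}, which already records exactly this equivalence for schemes. The implication $(1)\Rightarrow(2)$ is immediate, since every étale morphism is flat; all of the content lies in the remaining implications. The essential difficulty is that these cannot be checked étale-locally on the source: passing to an étale chart $W\to X$ replaces the hypothesis that $f$ is an \emph{isomorphism} over $U$ by the strictly weaker statement that $f$ is merely \emph{étale} over $U$, and for the latter flatness no longer forces étaleness (a ramified finite flat cover is flat but not étale). Thus the isomorphism-over-$U$ hypothesis must be used globally, and the reduction has to produce an honest scheme rather than just a scheme chart.

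This is precisely where local separatedness enters, and where I would invoke Lemma (C.2) of \cite{rydh}: a locally separated algebraic space that is an isomorphism over a schematically dense open subscheme of a Noetherian base is representable by a scheme. Granting this, $X$ becomes a finite-type, separated scheme over $S$ that is an isomorphism over $U$, and we are in the world of \hyperref[sectionsofseparated]{Lemma \ref*{sectionsofseparated}}.

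It then remains to match that lemma's hypothesis that $f^{-1}U$ be schematically dense in $X$, and I would handle this by a preliminary reduction. At any point $x\in X_s$ not lying in the scheme-theoretic closure $\overline{X_U}$, a neighbourhood of $x$ maps into the nowhere-dense closed set $S\setminus U$; since a flat finite-type morphism is open, $f$ can be neither flat nor étale at such an $x$, and moreover no section can pass through $x$, as any section of $f$ agrees with $f^{-1}$ over the dense open $U$ and hence has image contained in $\overline{X_U}$. Consequently, if $X_s\not\subseteq\overline{X_U}$ then all three conditions fail and the equivalence holds vacuously; otherwise one replaces $X$ by $\overline{X_U}$, for which $f^{-1}U$ is schematically dense by construction, and applies \hyperref[sectionsofseparated]{Lemma \ref*{sectionsofseparated}} directly. (If schematic density of $U$ in $S$ is needed before applying the lemma of \cite{rydh}, one first passes to $S_{\mathrm{red}}$, which changes none of the three conditions.)

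The main obstacle is this first reduction---establishing that $X$ is genuinely a scheme---because it is the only step that genuinely exploits local separatedness rather than formal manipulation. Everything downstream is a transcription of the scheme case, whose mechanism is standard: flatness at $x$ together with the zero-dimensionality of the generic fibres forces $f$ to be quasi-finite at $x$, whence Zariski's Main Theorem and the strict henselianness of $R$ exhibit $f$ as étale at $x$ with a section through it; conversely a section through $x$ over the strictly henselian base pins down the local structure and yields étaleness. I expect the only delicate point, beyond invoking Lemma (C.2) of \cite{rydh}, to be verifying its density hypothesis, which is why I would carry out the reduction to $\overline{X_U}$ (and, if necessary, to $S_{\mathrm{red}}$) before applying it.
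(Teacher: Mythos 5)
Your overall architecture---pass to the scheme case and quote Lemma \ref{sectionsofseparated}---is the same as the paper's, and your attention to the schematic-density hypothesis (replacing $X$ by $\overline{X_U}$, discarding points where all three conditions fail) is a sensible refinement that the paper itself glosses over. But there is a genuine gap at the one step you identify as the crux: the statement you attribute to Lemma (C.2) of \cite{rydh} is not what that lemma says. Rydh's lemma takes as input a locally separated, \emph{locally quasi-finite} algebraic space over a strictly henselian local base, and its conclusion is local rather than global: every point of the closed fibre admits an affine open neighbourhood, finite over $S$. It does not assert that a locally separated algebraic space which is an isomorphism over a schematically dense open subscheme is representable by a scheme; that assertion is nowhere proved in your argument and is far from obvious (an algebraic-space modification, e.g.\ a separated non-scheme quotient of a Hironaka-type example mapping properly and birationally to a scheme, is exactly the sort of thing that would have to be ruled out, and nothing in your reduction rules it out).

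The consequence is that the local quasi-finiteness you relegate to the ``downstream, standard'' part of the proof is in fact the input that must be secured \emph{before} any reduction to schemes is possible, and it has to be extracted separately from each nontrivial hypothesis: under (2), one uses openness of the flat locus and lower semicontinuity of fibre dimension for flat, finitely presented morphisms to conclude that all fibres are zero-dimensional (since the fibres over the dense open $U$ are), whence $f$ is locally quasi-finite; under (3), one argues directly at points of the special fibre using the given section over the strictly henselian base. Only then does Rydh's lemma furnish affine (hence scheme) neighbourhoods of the points of $X_s$, to which Lemma \ref{sectionsofseparated} applies. This is precisely the order of operations in the paper's proof; as written, your reduction to the scheme case is unsupported and the remainder of the argument has nothing to stand on.
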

\begin{proof}
It is immediate that $(1)$ implies $(2)$. We first show $(2)$ implies $(3)$. 

Suppose that $X/S$ is flat at points in the special fibre. As the flat locus is open by [\cite{stacks}, Tag 05WU] and contains $X_s$ and $X_U$, we may replace $X$ by this open subspace and assume $X/S$ is flat. 

By Lemma 67.31.3 of [\cite{stacks}, Tag 0D4L], the fibre dimension is lower semicontinuous for flat morphisms of algebraic spaces of finite presentation, and so as $X/S$ is of dimension $0$ over the dense open subscheme $U$ of $S$, the fibre dimension is $0$ everywhere. Fixing an \'{e}tale presentation $Y\rightarrow X$ of $X$ by an $S$-scheme $Y$, we see that $Y/S$ is of relative dimension $0$. By Lemma 28.28.5 of [\cite{stacks}, Tag 0397] we conclude that $Y/S$ is locally quasi-finite, and so in particular $X/S$ is locally quasi-finite. 

We may now apply Lemma (C.2) of \cite{rydh}, which says that a locally separated, locally quasi-finite algebraic space $X$ over a strictly henselian local scheme $S=\text{Spec}(R)$ is such that every geometric point $x$ in the special fibre $X_s$ admits an affine neighbourhood $\text{Spec}(\mathcal{O}_{X,x})$ with $\text{Spec}(\mathcal{O}_{X,x})\rightarrow S$ a finite morphism. In particular, $x$ admits an affine open neighbourhood. Hence we conclude by \hyperref[sectionsofseparated]{Lemma \ref*{sectionsofseparated}} that there exists a section $\sigma: S \rightarrow X$ through $x$, showing that $(2)$ implies $(3)$

Now suppose $X/S$ admits a section through $x\in X_s$. Choose an \'{e}tale presentation $Y\rightarrow X$ of $X$ by a scheme $Y$. Every point in $Y_s$ has finite residue field over $s$, and hence are closed in $Y_s$. By Lemma 28.19.6 of [\cite{stacks}, Tag 01TC] we conclude that $Y/S$ is locally quasi-finite at points of $Y_s$, and hence so too is $X/S$ locally quasi-finite at points of $X_s$. Lemma (C.2) of \cite{rydh} now implies each point of $X_s$ has an affine neighbourhood. Hence we may apply \hyperref[sectionsofseparated]{Lemma \ref*{sectionsofseparated}} to find that $X/S$ is \'{e}taletw at points of $X_s$. 
\end{proof}

\begin{proof}[Proof of Theorem \ref{mainresult1}]
Properties $(i)$ to $(iii)$ are \'{e}tale local on the $S$, so without loss of generality we may assume that $S$ is strictly henselian local with closed point $s$. 

Suppose first that $X/S$ is aligned. We shall show $(2)$. Let $\text{clo}(e)$ denote the closure of the unit section in $\text{Pic}^{[0]}_{X/S}$, and $p\in \text{clo}(e)$ a point. To show $\text{clo}(e)$ is \'{e}tale over $S$ at $p$ it suffices by \hyperref[droppingproj]{Lemma \ref*{droppingproj}} construct a section from $S$ through $p$ in $\text{clo}(e)$. By \hyperref[traitsexist]{Lemma \ref*{traitsexist}} there exists a non-degenerate trait $\phi: T \rightarrow \text{clo}(e)$ through $p$, and by \hyperref[formofpic]{Remark \ref*{formofpic}} this corresponds to a line bundle $\mathcal{L}$ on $X_T/T$ that is trivial over $\phi_S^*(U)$, where $\phi_S: T \rightarrow S$ is the composition of $\phi$ with the structure map to $S$ and $X_T/T$ is a semistable morphism obtained by pull-back. By choosing a rational section of $\mathcal{L}$ that is trivial over $\phi^*_S(U)$, we obtain a vertical Cartier divisor $D$ on $X_T$ whose restriction to the generic fibre is $0$. 

As $D$ is Cartier we obtain a $T$-Cartier vertex labelling on $\Gamma_s$, and we may assume that it takes the value $0$ at some vertex $v$. By \hyperref[forwardimplication]{Lemma \ref*{forwardimplication}} we obtain a Cartier divisor $\overline{D}$ on $X/S$ that pulls back to $D$ over $T$. This corresponds to a section $\sigma: S \rightarrow \text{clo}(e)$ through $p$, whence we conclude $(2)$ holds.

Clearly $(2)\Rightarrow (3)$, so it remains to show $(3)\Rightarrow (1)$. Equivalently, we'll show that if $X/S$ is not aligned then the closure of the unit section in $\text{Pic}^{[0]}_{X/S}$ is not flat over $S$. Hence we assume that $X/S$ is not aligned at $s$. By \hyperref[reverseimplication]{Lemma \ref*{reverseimplication}} there exists a non-degenerate trait $f:T \rightarrow S$ through $s$ along with a Cartier divisor $D$, trivial on the generic fibre of $X_T$, such that there does not exist a Cartier divisor $\overline{D}$ on $X/S$, trivial over $U$, that pulls back to $D$. By the definition of $\text{Pic}^{[0]}_{X/S}$ we have that $D$ gives a morphism $\phi: T \rightarrow \text{Pic}^{[0]}_{X/S}$, and in fact the image of $T$ lands in $\text{clo}(e)$. To see this last point we note that the generic point of $T$ lands in the image of the unit section and hence $T$ lands in $\text{clo}(e)$ as $\text{clo}(e)$ is closed in $\text{Pic}^{[0]}_{X/S}$.

If $\text{clo}(e)$ is flat over $S$ then by \hyperref[droppingproj]{Lemma \ref*{droppingproj}} there exists a section $\sigma: S \rightarrow \text{clo}(e)$ of $\text{clo}(e)\rightarrow S$ through the point $\phi(t)$, where $t$ is the closed point of $T$. This morphism corresponds to a Cartier divisor $\overline{D}$ on $X/S$, trivial over $U$. As $T$ is reduced we conclude that $\phi:T \rightarrow \text{clo}(e)$ factors through $f:T\rightarrow S$, and hence that the pullback of $\overline{D}$ to $T$ is linearly equivalent to $D$, contradicting how $D$ was chosen.
\end{proof}

Under additional hypotheses on $X/S$, \hyperref[mainresult1]{Theorem \ref*{mainresult1}} allows us to show that alignment of $X/S$ is equivalent to the existence of a N\'{e}ron model for $\text{Pic}^0_{X_U/U}$. When $f: X \rightarrow S$ is a projective semistable morphism, smooth over a dense open subscheme $U\subset S$, then Theorem 8.2.2 of \cite{bosch} informs us that $\text{Pic}_{X/S}$ exists as an $S$-scheme locally of finite presentation over $S$.

\begin{lemma}\label{romagnylemma}
Let $G$ be a group scheme over a base scheme $S$ such that $G\rightarrow S$ is locally of finite presentation and $G\rightarrow S$ is smooth along the unit section. Let $H$ denote the smooth locus of $G\rightarrow S$. Then $H$ is an open and closed subgroup scheme of $G$. 
\end{lemma}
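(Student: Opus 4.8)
The plan is to prove the four constituent assertions in turn: that $H$ is open, that every fibre $G_s$ is smooth, that $H$ is stable under the group laws, and finally that $H$ is closed, the last being the essential difficulty. First, $H$ is open because the smooth locus of a morphism locally of finite presentation is open. Next I would record the fibrewise consequence of the hypothesis. For a group scheme over a field, translation by the rational points of an algebraic closure acts transitively on closed points and preserves the smooth locus, so such a group scheme is smooth at one point if and only if it is smooth everywhere, if and only if it is smooth at the identity. Since $G\to S$ is smooth along $e$, each fibre $G_s$ is smooth at $e(s)$, hence $G_s$ is smooth over $k(s)$ for every $s\in S$. As a finitely presented morphism is smooth exactly where it is flat with smooth fibre, it follows that $H$ coincides with the flat locus of $G\to S$.

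To see that $H$ is a subgroup scheme, note that the unit section lands in $H$ by hypothesis, and that inversion $\iota$ is an $S$-automorphism of $G$ (being its own inverse), so it preserves the smooth locus and $\iota(H)=H$. The only substantial point is stability under multiplication, i.e. $m(H\times_S H)\subseteq H$. Here I would use the shear $S$-automorphism $\psi\colon G\times_S G\to G\times_S G$, $(a,b)\mapsto(a,ab)$, together with the observation that the second projection $p_2\colon G\times_S G\to G$, being a base change of the structure morphism along the first factor, is smooth at $(a,b)$ precisely when $G\to S$ is smooth at $a$. Given $x,y\in H$ in a common fibre, $p_2$ is smooth at $(x,y)$ and $G\to S$ is smooth at $y=p_2(x,y)$, so $G\times_S G\to S$ is smooth at $(x,y)$; applying $\psi$ transports this to smoothness of $G\times_S G\to S$ at $(x,xy)$. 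Since $p_2$ is again smooth at $(x,xy)$ (because $x\in H$) and is flat and surjective onto a neighbourhood, smoothness descends along it, as it is local on the source in the smooth topology, to give smoothness of $G\to S$ at $xy=p_2(x,xy)$. Thus $xy\in H$, and $H$ is a subgroup scheme.

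The hard part is closedness. On each fibre, $H_s$ is an open subgroup scheme of $G_s$, and an open subgroup scheme of a group scheme over a field is automatically closed, its complement being a union of translates, hence open; so $H_s$ is a union of connected components of $G_s$ containing the identity component. What remains is to upgrade this fibrewise clopenness to clopenness of $H$ in the total space. I would reduce, by standard limit arguments since everything is locally of finite presentation, to the case $S$ noetherian strictly henselian local; it then suffices to show $H$ is stable under specialisation, i.e. that every point $x$ in the closure of $H$ lies in $H$. Given such an $x$, choose $h\in H$ specialising to $x$ and a trait $\mathcal T=\text{Spec}(R)\to G$ with generic point $h$ and closed point $x$. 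Translating the tautological section of $G_{\mathcal T}\to\mathcal T$ by its own inverse carries it to the unit section, along which $G_{\mathcal T}\to\mathcal T$ is smooth by the hypothesis and base change; hence $G_{\mathcal T}\to\mathcal T$ is smooth, in particular flat, at the point lying over $x$. Since the fibre $G_s$ is already smooth, the one remaining ingredient for $x\in H$ is flatness of $G\to S$ at $x$.

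The main obstacle is precisely this last descent of flatness: the trait $\mathcal T\to S$ need not be flat over $S$, so flatness of $G_{\mathcal T}\to\mathcal T$ does not formally descend to flatness of $G\to S$. I would resolve this by invoking a valuative criterion for flatness over a reduced or normal base, to the effect that flatness of a finitely presented morphism may be tested after pullback along all traits, after first reducing to such a base; this is the step where I expect to lean on Romagny's suggestion. An equivalent packaging is to form the fppf quotient $Q=G/H$ by the open subgroup $H$, noting that the relation $R=\{(g_1,g_2):g_1g_2^{-1}\in H\}\cong H\times_S G$ has smooth surjective projections, to observe that $H=\pi^{-1}(\overline e(S))$ for the quotient map $\pi\colon G\to Q$ and the unit section $\overline e$, and thereby to reduce closedness of $H$ to separatedness of $Q\to S$. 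Since $H$ is open in $G$, the relative Lie algebra of $Q\to S$ vanishes along the unit, so $Q\to S$ is unramified and $\overline e$ is an open immersion, and the content is once more to show that its image is closed. Either way the combinatorial input, that open subgroups over a field are closed, is elementary; the genuine difficulty lies in propagating it across the base, and that is where the valuative flatness argument must do the work.
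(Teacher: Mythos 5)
Your treatment of openness, of fibrewise smoothness, and of the subgroup property is correct; the shear-automorphism argument is a standard and valid alternative to the paper's appeal to the crit\`ere de platitude par fibres, and both routes identify $H$ with the flat locus. The gap is in the closedness step, precisely where you flag it. The valuative criterion of flatness (EGA IV, 11.8.1) requires the base to be reduced --- a hypothesis you cannot arrange by ``first reducing to such a base,'' since the smooth locus of $G\to S$ is not determined by $G\times_S S_{\mathrm{red}}\to S_{\mathrm{red}}$ --- and, more seriously, it requires flatness of the pullback at \emph{every} point lying over $x$ for \emph{every} trait $\mathcal T\to S$ through $s=f(x)$. Your translation argument produces exactly one trait (the one joining $h$ to $x$ inside $G$, whose image in $S$ may even be concentrated in $\overline{\{s\}}$) and establishes flatness at exactly one point of $G_{\mathcal T}$ over $x$, namely the one on the tautological section. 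For an arbitrary trait through $s$ and an arbitrary point $\xi$ of $G_{\mathcal T}$ over $x$ there is no section through $\xi$, and $\xi$ need not lie in the closure of the smooth locus of $G_{\mathcal T}/\mathcal T$ (closure does not commute with base change), so the translation trick says nothing there. That some genuinely group-theoretic input is unavoidable at this point is shown by $\operatorname{Spec}\bigl(R[x]/(\pi x)\bigr)\to\operatorname{Spec}(R)$ over a discrete valuation ring: all fibres are smooth, the flat locus is nonempty, yet it is not stable under specialisation.

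Your second packaging via $Q=G/H$ is in substance the paper's proof, and you correctly isolate its two structural inputs: $G\to Q$ is an $H$-torsor, so $H$ is the preimage of the unit section of $Q$, and $Q\to S$ is unramified because $H_s\supseteq G_s^0$ forces each fibre $G_s/H_s$ to be \'etale over $k(s)$. What is missing is the step that makes this reduction pay off and renders any valuative criterion unnecessary: for an unramified morphism, after passing to a suitable \'etale neighbourhood on the base every point of the source has a Zariski-open neighbourhood on which the morphism is a closed immersion ([\cite{stacks}, Tag 04GL]), and from this local description one deduces that the \'etale locus of $Q\to S$ is closed; $H$ is then closed as the preimage of that locus under the smooth surjection $G\to Q$. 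As written, your proof defers this decisive step back to the valuative-flatness route, which does not go through, so the argument is incomplete.
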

\begin{proof}
Clearly $H$ is open in $G$. We first show it is a subgroup scheme of $G$. Note that the fibres of $G/S$ are group schemes admitting smooth open sets, and as such are smooth everywhere. As $G/S$ is of finite presentation, it follows that the smooth locus of $G/S$ is exactly the flat locus. By the \textit{crit\`{e}re de platitude par fibres} the image of $H\times H\rightarrow G$ in $G$ is flat and hence the image is contained in $H$. As $H$ is closed under inversion it follows that $H$ is a subgroup scheme. 

It remains to show $H$ is closed in $G$. To accomplish this we will show that the \'{e}tale locus of $G/H$ over $S$ is closed in $G/H$, where the quotient $G/H$ exists as an algebraic space over $S$ by Tag 071R of \cite{stacks}. 

Let $(G/H)_{fppf}$ denote the quotient $fppf$-sheaf on $S$ of $H$ acting on $G$. The action of $H$ on $G$ is free, and transitive on the fibres of $G$ over $G/H$, whence we see that $G$ is an $H$-torsor over $(G/H)$ in the category of $fppf$-sheaves on $S$.

Choose an $fppf$-cover $T\rightarrow G/H$ of $G/H$ such that there exists a section from $T$ to $G$. One obtains the following product diagram:

\begin{equation}
\begin{tikzcd}
H\times_s T \arrow[r] \arrow[d]
& G \arrow[d] \\
T \arrow[r]
& (G/H).
\end{tikzcd}
\end{equation}

As smoothness is $fppf$-local on the target, we conclude that $G\rightarrow G/H$ is smooth. From this we also see that sections of $G\rightarrow G/H$ exist \'{e}tale locally on $G/H$, and hence that $G$ is an \'{e}tale $H$-torsor over $G/H$. 

It remains to show that the \'{e}tale locus of $G/H$ over $S$ is closed in $G/H$. First we show $G/H$ is unramified over $S$.

By Theorem 3.2 in Expos\'{e} IVa of \cite{sga3} the quotient $G_s/H_s$ exists as a smooth group scheme over $s$ for any point $s\in S$. Furthermore, if $G_s^0$ denotes the connected component of the identity in $G_s$, the quotient $G_s/G_s^0$ is of dimension $0$ over $s$ by Theorem 5.5.1 of \cite{sga3}, and we conclude by Remark 5.3.2 of \cite{sga3} that $G_s/H_s$ is \'{e}tale over $s$ as $G_s^0\subset H_s$. 

As sheaves on the small \'{e}tale site of $S$ the fibres of the quotient are the quotients of the fibres, and so by Theorem 5.5.1 of Expos\'{e} VIa of \cite{sga3} the fibres of $G/H$ are \'{e}tale schemes over their images in $S$. In particular they are unramified, and so Lemma 28.33.12 of Tag 02G3 of \cite{stacks} informs us that $G/H$ is unramified over $S$.

We conclude by showing the \'{e}tale locus of $G/H$ over $S$ is closed. Choose an \'{e}tale presentation $P\rightarrow G/H$ of $G/H$. $P$ is unramified over $S$, and as being a closed immersion is \'{e}tale local on the base by Tag 02L6 of \cite{stacks} it suffices to show that the \'{e}tale locus of $P\rightarrow S$ is closed.

Let $x\in P$ have image $s\in S$. After taking a sufficiently small \'{e}tale neighbourhood of $s$, Tag 04GL of \cite{stacks} tells us that we can find a Zariski open neighbourhood $U$ of $x$ such that $U\rightarrow S$ is a closed immersion. Thus $U\rightarrow S$ is either \'{e}tale or the \'{e}tale locus is empty. Either way, we find that the \'{e}tale locus is closed, and hence that its pullback $H$ in $G$ is closed.
\end{proof}

\begin{corollary}\label{sufficientsmoothness}
Let $S$ be an excellent regular locally Noetherian scheme, $U\subset S$ a dense open subscheme, and $f: X \rightarrow S$ a projective semistable morphism smooth over $U$. Let $e$ denote the unit section of $\text{Pic}_{X/S}$. If $\text{Pic}_{X/S}$ is smooth over $S$ along $e$, then $\text{Pic}^0_{X/S}$ is a smooth open subgroup scheme of $\text{Pic}_{X/S}$, and the closure $\text{Pic}^{[0]}_{X/S}$ of $\text{Pic}^{0}_{X_U/U}$ in $\text{Pic}_{X/S}$ is smooth over $S$.
\end{corollary}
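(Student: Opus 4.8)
The plan is to apply Lemma \ref{romagnylemma} to $G := \text{Pic}_{X/S}$, which is a group scheme locally of finite presentation over $S$ by Theorem 8.2.2 of \cite{bosch} and which is smooth along its unit section by hypothesis. Writing $H$ for the smooth locus of $G\rightarrow S$, Lemma \ref{romagnylemma} shows that $H$ is an open and closed subgroup scheme of $G$, and by construction $H\rightarrow S$ is smooth. Since $H$ is open and closed in $G$, for each point $s\in S$ the fibre $H_s$ is open and closed in $G_s$ and contains the identity, so it contains the identity component $G_s^0 = \text{Pic}^0_{X_s/s}$ and has the same identity component; thus the fibre-wise identity component $\text{Pic}^0_{X/S}$ of $G$ coincides with that of $H$. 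As $H\rightarrow S$ is a smooth group scheme whose fibres are of finite type (being subschemes of the finite-type Picard schemes $\text{Pic}_{X_s/s}$), the fibre-wise identity component is an open subgroup scheme by [\cite{sga3}, Expos\'{e} $VI_{B}$, Th\'{e}or\`{e}me 3.10]. Hence $\text{Pic}^0_{X/S}$ is open in $H$, and being open in the smooth $S$-scheme $H$ it is smooth over $S$. This would establish the first assertion.

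For the second assertion I would first identify $\text{Pic}^{[0]}_{X/S}$ with the closure of $\text{Pic}^0_{X/S}$ in $H$. Since $\text{Pic}^0_{X_U/U} = (\text{Pic}^0_{X/S})_U \subseteq H$ and $H$ is closed in $G$, the closure $\text{Pic}^{[0]}_{X/S}$ of $\text{Pic}^0_{X_U/U}$ is contained in $H$. Moreover $\text{Pic}^0_{X/S}\rightarrow S$ is flat and $U$ contains the generic points of the reduced scheme $S$, so $(\text{Pic}^0_{X/S})_U$ is schematically dense in $\text{Pic}^0_{X/S}$; consequently the closure of $\text{Pic}^0_{X_U/U}$ in $H$ equals the closure of $\text{Pic}^0_{X/S}$ in $H$.

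The key step is then to compute this closure using the regularity of $S$. Because $H\rightarrow S$ is smooth and $S$ is regular, the total space $H$ is regular, hence normal, hence a disjoint union of its integral connected components $H = \bigsqcup_{\alpha} H_{\alpha}$. The open subscheme $\text{Pic}^0_{X/S}$ meets only some of these components, and on each component it meets it is dense, since a nonempty open subscheme of an integral scheme is dense. As $H$ is reduced, the scheme-theoretic closure of $\text{Pic}^0_{X/S}$ is exactly the union $\bigsqcup_{\alpha\in A} H_{\alpha}$ of the components it meets; this union is open and closed in $H$, so $\text{Pic}^{[0]}_{X/S}$ is open in $H$ and therefore smooth over $S$, as claimed. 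The crux of the argument, and the point where the hypothesis that $S$ be regular is essential, is precisely this reduction of the closure to a union of connected components: without the normality of $H$ the closure of the open identity component could acquire non-smooth points in the fibres over $S\setminus U$, and the conclusion would fail.
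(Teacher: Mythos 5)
Your proof is correct and follows essentially the same route as the paper: apply Lemma \ref{romagnylemma} to identify the smooth locus $H$ of $\text{Pic}_{X/S}$ as an open and closed subgroup scheme, observe that $\text{Pic}^0_{X/S}$ is an open subgroup scheme contained in $H$ and hence smooth, and identify $\text{Pic}^{[0]}_{X/S}$ with a union of connected components of $H$. Your justification of the final step --- using regularity of $S$ to make $H$ regular, hence normal, hence a disjoint union of integral connected components, so that the scheme-theoretic closure of the open subscheme $\text{Pic}^0_{X/S}$ is exactly the union of the components it meets --- is in fact more complete than the paper's, which asserts without further argument that the closure of an open subscheme of the smooth locus is a union of its connected components.
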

\begin{proof}
By the assumption that $\text{Pic}_{X/S}$ is smooth along $e$, 15.6.5 of \cite{egaiv_iii} implies that $\text{Pic}^0_{X/S}$ is an open subscheme of $\text{Pic}_{X/S}$. By \hyperref[romagnylemma]{Lemma \ref*{romagnylemma}} the smooth locus of $\text{Pic}_{X/S}$ is an open and closed subgroup scheme. In particular, as $\text{Pic}^0_{X/S}$ is the fibrewise connected component of $e$ it is contained in the smooth locus, and hence is smooth over $S$.

The restriction of $\text{Pic}^0_{X/S}$ over $U$ is an open subscheme of the smooth locus, and so its closure in $\text{Pic}_{X/S}$ is a union of connected components of the smooth locus. In particular $\text{Pic}^{[0]}_{X/S}$ is also smooth over $S$.
\end{proof}

\begin{theorem}[c.f. Theorem 6.2 of \cite{holmes}]
\label{mainresult2}
Let $S$ be an excellent regular locally Noetherian scheme, $U\subset S$ a dense open subscheme, and $f: X \rightarrow S$ a projective semistable morphism smooth over $U$. Let $e$ denote the unit section of $\text{Pic}^{0}_{X_U/U}$ and $\text{clo}(e)$ for the closure of $e$ in $\text{Pic}^{[0]}_{X/S}$. Assume that $\text{Pic}_{X/S}$ is smooth over $S$ along the unit section.
\begin{enumerate}
\item If $X$ is regular and $\text{clo}(e)$ is \'{e}tale over $S$, then a N\'{e}ron model for $\text{Pic}^0_{X_U/U}$ exists.
\item If a N\'{e}ron model for $\text{Pic}^0_{X_U/U}$ exists, then $\text{clo}(e)$ is \'{e}tale over $S$.
\end{enumerate}
\end{theorem}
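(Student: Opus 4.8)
The plan is to handle the two implications separately, both organised around the quotient $N := \text{Pic}^{[0]}_{X/S}/\text{clo}(e)$. For part (1), assume $X$ regular and $\text{clo}(e)$ \'{e}tale over $S$. First I would verify that $\text{clo}(e)$ is a closed subgroup algebraic space of $\text{Pic}^{[0]}_{X/S}$: it is closed by construction, and since $\text{clo}(e)$ is \'{e}tale (hence flat, and reduced over the reduced scheme $S$) the product $\text{clo}(e)\times_S\text{clo}(e)$ has schematically dense restriction over $U$, so the multiplication and inversion maps, which agree with the unit over the dense open $U$, factor through the closed subspace $\text{clo}(e)$. The quotient $N$ then exists as an algebraic space, and $\text{Pic}^{[0]}_{X/S}\to N$ is an $\text{clo}(e)$-torsor, hence \'{e}tale because $\text{clo}(e)/S$ is \'{e}tale.

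By Corollary \ref{sufficientsmoothness} the space $\text{Pic}^{[0]}_{X/S}$ is smooth over $S$; as $\text{Pic}^{[0]}_{X/S}\to N$ is smooth and surjective, descent of smoothness gives $N$ smooth over $S$. For separatedness I would use that the base change of the unit section $e_N\colon S\to N$ along the fppf cover $\text{Pic}^{[0]}_{X/S}\to N$ is exactly $\text{clo}(e)\hookrightarrow\text{Pic}^{[0]}_{X/S}$, a closed immersion; since being a closed immersion descends along fppf covers, $e_N$ is a closed immersion, and for a group algebraic space this forces $N/S$ separated. The N\'{e}ron mapping property then splits into existence and uniqueness. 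For existence, given smooth $T/S$ and $g_U\colon T_U\to\text{Pic}^0_{X_U/U}$, I would use that $X$ and hence $X_T=X\times_S T$ are regular (smooth over regular), so the line bundle on $X_{T_U}$ underlying $g_U$ (via Remark \ref{formofpic}) extends over the codimension $\ge 1$ complement to $X_T$, producing $T\to\text{Pic}_{X/S}$; reducedness of $T$ and density of $T_U$ force the image into the closed subspace $\text{Pic}^{[0]}_{X/S}$, and composition with $\text{Pic}^{[0]}_{X/S}\to N$ extends $g_U$. Uniqueness is immediate from separatedness of $N$ together with schematic density of $T_U$ in $T$.

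For part (2), assume a N\'{e}ron model $N$ exists; since the properties in question are \'{e}tale-local on $S$ I would reduce to $S$ strictly henselian local, and by Corollary \ref{sufficientsmoothness} the smooth model $\text{Pic}^{[0]}_{X/S}$ receives, via the N\'{e}ron mapping property applied to the identity over $U$, a homomorphism $\psi\colon\text{Pic}^{[0]}_{X/S}\to N$. Because $N$ is separated and $\text{clo}(e)$ is the scheme-theoretic closure of the unit, $\psi$ carries $\text{clo}(e)$ into the unit section $e_N$, so $\text{clo}(e)\subseteq\ker\psi$. By Theorem \ref{mainresult1} it suffices to prove $\text{clo}(e)$ is flat over $S$, and by Lemma \ref{droppingproj} this amounts to producing, for each $p\in\text{clo}(e)_s$, a section of $\text{clo}(e)\to S$ through $p$, equivalently a section of $\text{Pic}^{[0]}_{X/S}$ that is trivial over $U$ and passes through $p$. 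I would argue contrapositively: if $\text{clo}(e)$ is not flat then $X/S$ is not aligned (Theorem \ref{mainresult1}), so Lemma \ref{reverseimplication} furnishes a non-degenerate trait $\phi\colon\mathcal{T}\to S$ through $s$ and a generically trivial Cartier divisor $D$ on $X_{\mathcal{T}}$ that is not linearly equivalent to $\phi^*E$ for any Cartier divisor $E$ on $X/S$ trivial over $U$; the divisor $D$ defines a morphism $\mathcal{T}\to\text{clo}(e)$, and I would derive a contradiction with the existence of $N$ and the uniqueness half of the mapping property, which would otherwise force such an $E$ to exist.

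The main obstacle is precisely this last point in part (2): converting ``$N$ exists'' into the existence of the extending divisor $E$ on $X/S$. Equivalently, it is the claim that $\psi$ is smooth, so that $\ker\psi$, which then coincides with the flat closure $\text{clo}(e)$ of its generic fibre $e_U$, is flat over $S$. A purely group-theoretic argument does not suffice, since smoothness of $\psi$ at the identity is obstructed by the dimension of $\ker\psi_s$ and is in fact \emph{equivalent} to the desired conclusion; thus the N\'{e}ron mapping property must be used in an essential way, beyond the mere existence of some smooth separated model, to rule out a positive-dimensional kernel over $s$. It is here that the separatedness and uniqueness of $N$, combined with the Cartier-divisor obstruction of Lemma \ref{reverseimplication}, must carry the real weight.
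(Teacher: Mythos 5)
Your part (1) is essentially the paper's argument: form the quotient $N=\text{Pic}^{[0]}_{X/S}/\text{clo}(e)$, get smoothness by fppf descent from Corollary \ref{sufficientsmoothness}, separatedness from the unit section of $N$ being a closed immersion, uniqueness from separatedness, and existence by extending the line bundle as a Weil-hence-Cartier divisor on the regular space $T'\times_S X$ and descending along an \'{e}tale presentation of $T$. That half is fine.

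Part (2) contains a genuine gap, which you yourself flag: you never convert the existence of $N$ into flatness of $\text{clo}(e)$, and your proposed route through Lemma \ref{reverseimplication} is circular, since extracting the extending divisor $E$ from a section of $\text{Pic}^{[0]}_{X/S}$ through the relevant point is exactly the flatness statement you are trying to prove. Moreover, your closing assertion that ``a purely group-theoretic argument does not suffice'' is mistaken; that is precisely how the paper closes the loop. The missing ingredients are: (a) Lemma \ref{groupisseparated}, which shows that $\text{Pic}^0_{X/S}$ --- being an open subgroup scheme, flat and quasi-separated over $S$, with connected fibres --- is in fact \emph{separated} over $S$; and (b) Proposition 3.1(e) of \cite{SGA7}, a rigidity statement which then applies to the homomorphism $\phi^0\colon \text{Pic}^0_{X/S}\to N^0$ obtained from the N\'{e}ron mapping property (using Lemma \ref{isomoveru} to identify the generic fibres) and shows that, being an isomorphism over the dense open $U$ between separated flat group objects, $\phi^0$ is an \emph{open immersion}, hence flat. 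Covering $\text{Pic}^{[0]}_{X_s/s}$ by translates of $\text{Pic}^0_{X_s/s}$ and invoking the fibrewise criterion of flatness gives that $\phi$ itself is flat; its kernel $K$ is then flat over $S$ and closed in $\text{Pic}^{[0]}_{X/S}$ (as $N$ is separated), and since $K_U=\text{clo}(e)_U$ is schematically dense in $K$ one gets $K=\text{clo}(e)$. Flatness of $\text{clo}(e)$ then yields \'{e}taleness by Theorem \ref{mainresult1}. Without step (b) --- or some substitute rigidity argument ruling out a positive-dimensional fibre of $\ker\phi$ over $s$ --- your part (2) does not go through.
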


\begin{lemma}\label{isomoveru}
With the assumptions as in \hyperref[mainresult2]{Theorem \ref*{mainresult2}}, one has $\text{Pic}^{[0]}_{X/S}\times_S U \cong \text{Pic}^0_{X_U/U}$.
\end{lemma}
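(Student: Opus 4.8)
The plan is to reduce the claim to showing that $\text{Pic}^0_{X_U/U}$ is closed in $\text{Pic}_{X_U/U}$, and then to extract this from the fact that, over $U$, the fibres of $X$ are smooth projective geometrically integral varieties.

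Write $P=\text{Pic}_{X/S}$ and $P_U=P\times_S U=\text{Pic}_{X_U/U}$, an open subscheme of $P$. Since $\text{Pic}^0$ is formed fibrewise, $\text{Pic}^0_{X/S}\times_S U=\text{Pic}^0_{X_U/U}$ as an open subscheme of $P_U$, and by definition $\text{Pic}^{[0]}_{X/S}$ is the schematic closure of $\text{Pic}^0_{X_U/U}$ in $P$. Restriction to the open subscheme $P_U\subseteq P$ commutes with the formation of this schematic closure — the relevant immersions are quasi-compact since everything is locally Noetherian — so $\text{Pic}^{[0]}_{X/S}\times_S U$ is exactly the schematic closure of $\text{Pic}^0_{X_U/U}$ inside $P_U$. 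It therefore suffices to prove that $\text{Pic}^0_{X_U/U}$ is already closed in $P_U$: an open subscheme that is topologically closed is a clopen, hence closed, subscheme, and so coincides with its own schematic closure, yielding $\text{Pic}^{[0]}_{X/S}\times_S U\cong\text{Pic}^0_{X_U/U}$.

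For the closedness I would argue as follows. Over $U$ the morphism $X_U\to U$ is smooth, projective and has geometrically connected fibres; smoothness makes the geometric fibres regular, hence irreducible and reduced, so the fibres are geometrically integral. By Grothendieck's theory of the Picard scheme this forces $P_U=\text{Pic}_{X_U/U}$ to be separated over $U$, while $\text{Pic}^0_{X_U/U}$ is an abelian scheme, in particular proper over $U$. The open immersion $\text{Pic}^0_{X_U/U}\hookrightarrow P_U$ therefore has proper source and separated target, so it is proper; being also a monomorphism, it is a closed immersion, and in particular $\text{Pic}^0_{X_U/U}$ is closed in $P_U$. I expect the only substantive inputs to be these two classical facts — separatedness of $\text{Pic}_{X_U/U}$ and properness of $\text{Pic}^0_{X_U/U}$ for a smooth projective family with geometrically integral fibres; everything else is formal. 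Note that this argument uses only that $f$ is projective and smooth over $U$, and not the full strength of the smoothness of $\text{Pic}_{X/S}$ along $e$ nor the regularity of $X$.
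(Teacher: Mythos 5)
Your argument is correct, but it takes a genuinely different route from the paper's. The paper works directly with $\text{Pic}^{[0]}_{X/S}\times_S U$: it first proves quasi-compactness over $U$ (covering $U$ by opens over which $\text{Pic}$ is a disjoint union of quasi-projective schemes, so that the closure of a connected open piece is quasi-projective), then upgrades this to properness via Theorem 8.4.3 of \cite{bosch}, deduces connectedness of the fibres from 15.5.4 of \cite{egaiv_iii}, and finally matches the fibres with those of $\text{Pic}^0_{X_U/U}$ and concludes by reducedness. You instead reduce to showing that $\text{Pic}^0_{X_U/U}$ is already closed in $\text{Pic}_{X_U/U}$; the reduction is sound, since every immersion into the locally Noetherian space $\text{Pic}_{X/S}$ is quasi-compact and schematic closure commutes with restriction to opens, and the clopen observation then finishes the formal part. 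Your closedness argument (proper source into separated target gives a proper monomorphism, hence a closed immersion) is clean and avoids the fibrewise comparison entirely. The two proofs rest on classical inputs of comparable depth: your crux is the properness of $\text{Pic}^0_{X_U/U}$ over $U$, which you should cite directly (it is Grothendieck's theorem that $\text{Pic}^\tau$ is projective for a smooth projective morphism with geometrically integral fibres, cf.\ Section 8.4 of \cite{bosch}) rather than derive from ``it is an abelian scheme,'' since being an abelian scheme already presupposes properness. One further small caveat: your closing remark that the smoothness of $\text{Pic}_{X/S}$ along $e$ is not needed is slightly overstated, because the openness of the fibrewise identity component $\text{Pic}^0_{X_U/U}$ in $\text{Pic}_{X_U/U}$ --- which your open-immersion/clopen argument uses --- is itself obtained from smoothness of $\text{Pic}$ along the unit section over $U$ via 15.6.5 of \cite{egaiv_iii}, exactly as in \hyperref[sufficientsmoothness]{Corollary \ref*{sufficientsmoothness}}.
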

\begin{proof}
We first show that $\text{Pic}^{[0]}_{X/S}\times_S U$ is quasi-compact over $U$. As quasi-compactness can be checked locally, we may assume for now that $U$ is connected. By Theorem 8.2.5 of \cite{bosch}, we can cover $U$ by open sets $V_i$ such that $\text{Pic}_{X_{V_i}/V_i}$ is a disjoint union of quasi-projective schemes over $V_i$. In particular, $\text{Pic}^{[0]}_{X/S}\times_S V_i$ is the closure of a connected open subscheme in $\text{Pic}_{X_{V_i}/V_i}$, and so is quasi-projective over $V_i$, hence quasi-compact over $V_i$. 

Thus $\text{Pic}^{[0]}_{X/S}\times_S U$ is quasi-compact over $U$. By Theorem 8.4.3 of \cite{bosch}, we conclude that $\text{Pic}^{[0]}_{X/S}\times_S U$ is proper over $U$, and so by Lemma 15.5.4 of \cite{egaiv_iii} its fibres are connected over $U$. Hence we find that the fibres of $\text{Pic}^{[0]}_{X/S}\times_S U$ over $U$ are equal to those of $\text{Pic}^0_{X_U/U}$, and the conclusion follows by the reducedness of both schemes. 
\end{proof}

The proof of \hyperref[mainresult2]{Theorem \ref*{mainresult2}} uses the following result from \cite{holmes}.

\begin{lemma}[Lemma 6.1 of \cite{holmes}]
\label{smoothdescent}
Let $S$ be a scheme, $U \subset S$ a dense open subscheme with $U\rightarrow S$ quasi-compact, and $f: S' \rightarrow S$ a smooth, surjective morphism. Let $A/U$ be an abelian scheme, and suppose $f^*A$ has a N\'{e}ron model $N'$ over $S'$. Then $A$ has a N\'{e}ron model $N$ over $S$, and $f^*N = N'$.
\end{lemma}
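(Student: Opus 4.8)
The plan is to descend the given N\'eron model $N'$ from $S'$ to $S$ by faithfully flat descent, using the uniqueness inherent in the N\'eron mapping property to manufacture the descent datum. Since $f$ is smooth and surjective it is an fppf cover, so it suffices to equip $N'$ with a descent datum relative to $f$, verify the cocycle condition, and invoke the fact that algebraic spaces satisfy descent for the fppf topology to obtain an algebraic space $N/S$ with $f^*N\cong N'$. Throughout I write $U' = f^{-1}(U)$ and $A' = f^*A = A_{U'}$; since $f$ is flat and surjective, $U'$ is dense open in $S'$, so speaking of a N\'eron model over $S'$ makes sense.

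First I would construct the descent datum. Set $S'' = S'\times_S S'$ and $S''' = S'\times_S S'\times_S S'$, with projections $p_i : S''\to S'$, $p_{ij} : S'''\to S''$, and $q_i : S'''\to S'$. The crucial input is that the formation of (lft) N\'eron models commutes with smooth base change; applying this to the smooth projections $p_i$ shows that $p_1^*N'$ and $p_2^*N'$ are both N\'eron models over $S''$ of the common pullback of $A$ to $U'' = U\times_S S''$. As N\'eron models are unique up to unique isomorphism, there is a canonical isomorphism $\phi : p_1^*N'\xrightarrow{\sim} p_2^*N'$ over $S''$. The cocycle identity $p_{13}^*\phi = p_{23}^*\phi\circ p_{12}^*\phi$ on $S'''$ then holds automatically: both sides are isomorphisms between N\'eron models over $S'''$ (again by smooth base change of $N'$), hence coincide by uniqueness. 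Effectivity of fppf descent for algebraic spaces then yields $N/S$ with $f^*N\cong N'$. Smoothness and separatedness are fppf-local on the base, so they descend from $N'\to S'$ to $N\to S$, and $N_U\cong A$ because both become identified with $N'_{U'} = A_{U'}$ after the fppf base change $U'\to U$, compatibly with the descent data.

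The heart of the argument is verifying that $N$ satisfies the N\'eron mapping property. Given a smooth $T/S$ and a $U$-morphism $g_U : T_U\to A = N_U$, I would form $T' = T\times_S S'$, which is smooth over $S'$, and use the N\'eron mapping property of $N'$ to extend the pullback of $g_U$ to a unique $g' : T'\to N'$. To descend $g'$ to a morphism $T\to N$ I must show it is compatible with $\phi$, i.e.\ that $\phi\circ p_1^*g'$ and $p_2^*g'$ agree as maps $T''\to p_2^*N'$, where $T'' = T\times_S S''$. Both extend one and the same morphism over $U''$, and since $p_2^*N'$ is a N\'eron model over $S''$ and $T''/S''$ is smooth, the uniqueness clause of its N\'eron mapping property forces the two to coincide. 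Thus $g'$ descends to $g : T\to N$ extending $g_U$; uniqueness of $g$ follows because any two extensions agree after pullback to $S'$ by the N\'eron mapping property of $N'$, and morphisms of algebraic spaces satisfy fppf descent.

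I expect the main obstacle to be twofold. First, the argument rests on effectivity of fppf descent for algebraic spaces, a nontrivial input (in contrast to schemes, for which fppf descent is not effective in general); one must ensure that $N'$ together with $\phi$ genuinely descends in the category of algebraic spaces. Second, the morphism-descent step in the N\'eron mapping property requires the compatibility of $g'$ with $\phi$ to be checked via uniqueness at the level of $S''$, which is precisely where the fact that N\'eron models commute with smooth base change is indispensable.
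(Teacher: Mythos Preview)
The paper does not supply a proof of this lemma at all: it is stated as a citation of Lemma~6.1 of \cite{holmes} and immediately followed by the proof of Theorem~\ref{mainresult2}. So there is nothing in the present paper to compare against, and your write-up is already more than the paper itself offers.

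On its own merits, your descent argument is the standard one and is essentially correct. A couple of small points are worth tightening. First, when you assert that $U'=f^{-1}(U)$ is dense in $S'$, the relevant fact is not flatness alone but openness of $f$ (smooth morphisms are open), which prevents any nonempty open of $S'$ from mapping into $S\setminus U$. Second, the hypothesis that $U\to S$ is quasi-compact does not appear in your argument; in Holmes' setting it is used to guarantee that schematic density of $U$ (resp.\ $U'$) behaves well under base change and that the uniqueness statements over dense opens propagate; you should check that your invocation of uniqueness over $U''$ and $U'''$ really only needs topological density, or else note where quasi-compactness enters. Finally, the effectivity of fppf descent for algebraic spaces that you flag as a potential obstacle is indeed a genuine theorem (it is a stack in the fppf topology), so your outline goes through, but it is worth citing the precise statement rather than leaving it as an ``expected input''.
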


\begin{proof}[Proof of Theorem \ref{mainresult2}]
As $\text{clo}(e)$ is flat over $S$, the quotient $N$ of $\text{Pic}^{[0]}_{X/S}$ by $\text{clo}(e)$ exists as a separated group algebraic space, with separatedness following from the unit section of $N$ being a closed immersion. As $N$ permits an $fppf$-cover by $\text{Pic}^{[0]}_{X/S}$, which itself is smooth over $S$ by \hyperref[sufficientsmoothness]{Corollary \ref*{sufficientsmoothness}}, Corollary 6.5.2 of \cite{egaiv_ii} implies that $N$ is smooth. Using the separatedness of $N$ and the reduced-to-separated theorem (10.2.2 of \cite{vakil}), the uniqueness part of the N\'{e}ron mapping property is automatic, so we must show existence.

Let $T\rightarrow S$ be a smooth morphism of algebraic spaces and $T_U\rightarrow N_U$ be an $S$-morphism. By choosing a presentation of $T$ we can find a surjective \'{e}tale morphism $T'\rightarrow T$ where $T'$ is a scheme; by base change we have a morphism $T'_U\rightarrow \text{Pic}^{[0]}_{X/S}\times_S U$.

Because $X/S$ admits a section and is cohomologically flat in dimension $0$, by Proposition 8.1.4 of \cite{bosch} there exists a line bundle $\mathcal{F}$ on $T'_U\times_U X_U$ such that $T'_U\rightarrow \text{Pic}^{[0]}_{X/S}\times_S U$ is given by $\mathcal{F}$. Let $D = \sum_i n_i p_i$ be a Cartier divisor on $T'_U\times_U X_U$ with $\mathcal{O}(D) \cong \mathcal{F}$, where the $p_i$ are prime Weil divisors. Define $\overline{D} =\sum_i n_i \overline{p}_i$, where $\overline{p_i}$ denotes the scheme-theoretic closure of $p_i$ in $T'\times_S X$. We claim $T'\times_S X$ is regular, and hence $\overline{D}$ is a Cartier divisor. This follows as $T'\rightarrow S$ is smooth and $X$ is regular; by [\cite{stacks}, Tag 036D] regularity is local on the base in the smooth topology. Hence $\overline{D}$ is a Cartier divisor. This defines a morphism $T'\rightarrow \text{Pic}^{[0]}_{X/S}\rightarrow N$, whose restriction to $U$ coincides with $T'_U\rightarrow N_U$. This descends to a morphism $T\rightarrow N$ by \'{e}tale descent, necessarily unique by the separatedness of $N$, concluding the proof of $(1)$.

We will now prove $(2)$. Let $N$ be the N\'{e}ron model of $\text{Pic}^0_{X_U/U}$. Theorem 8.2.2 of \cite{bosch} implies that $\text{Pic}_{X/S}$ is a scheme. Note that $\text{Pic}^{[0]}_{X/S}\times_S U = \text{Pic}^0_{X_U/U}$ by \hyperref[isomoveru]{Lemma \ref*{isomoveru}}, and so by the N\'{e}ron mapping property we obtain a morphism $\phi: \text{Pic}^{[0]}_{X/S} \rightarrow N$. If $K$ denotes the kernel of $\phi$, we would like to show that $K$ is flat over $S$ and equal to $\text{clo}(e)$. We shall do this by showing $\phi$ is flat, for if $\phi$ were flat then so too would $K$ be flat over $S$ by property of base change. Furthermore the map $K \rightarrow \text{Pic}^{[0]}_{X/S}$ would be a closed immersion as $N$ is separated over $S$, whence its unit section is a closed immersion. Then, as the preimage of $U$ in $K$ is schematically dense by Theorem 11.10.5 of \cite{egaiv_iii} and $K_U=\text{clo}(e)_U$, we could conclude $K=\text{clo}(e)$. 

To show $\phi$ is flat, it suffices by \hyperref[droppingproj]{Lemma \ref*{droppingproj}} to show that the fibre $\phi_s$ of $\phi$ over $s$ is flat for all $s\in S$ by [\cite{stacks}, Tag 05X0]. In fact, we will show that the restriction $\phi^0_s: \text{Pic}^0_{X_s/s}\rightarrow N^0_s$ of $\phi$ to the connected components of $e$ is flat, as one can then cover $\text{Pic}^{[0]}_{X_s/s}$ by translates of $\text{Pic}^0_{X_s/s}$, the former being a scheme as it is a group algebraic space over a field (see \cite{artin}). The restriction $\phi^0$ of $\phi$ to the fibrewise-connected components of the identity is an isomorphism over $U$. Note that $\text{Pic}^0_{X/U}$ is an open subscheme of $\text{Pic}_{X/S}$ by 8.4 of \cite{bosch}, hence it is flat and quasi-separated over $S$. Hence \hyperref[groupisseparated]{Lemma \ref*{groupisseparated}} implies $\text{Pic}^0_{X/S}$ is separated over $S$, and so we may apply Proposition 3.1(e) of \cite{SGA7} to find that $\phi^0$ is an open immersion, and hence flat.

\end{proof}

\begin{lemma}\label{groupisseparated}
Let $G$ be a group scheme over a base $S$ with connected fibres, and suppose $G/S$ is  quasi-separated and flat over $S$. Then $G/S$ is separated.
\end{lemma}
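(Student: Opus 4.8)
The plan is to reduce separatedness to the assertion that the unit section $e\colon S\to G$ is a closed immersion, and then to verify the latter by a valuative argument exploiting the group law together with flatness and connectedness of the fibres. First I would recall the standard equivalence: $G/S$ is separated iff $e$ is a closed immersion. Indeed, the diagonal $\Delta_{G/S}\colon G\to G\times_S G$ is the base change of $e$ along the division morphism $c\colon G\times_S G\to G$, $(g,h)\mapsto gh^{-1}$, since scheme-theoretically $c^{-1}(e(S))=\Delta(G)$; as $c$ is faithfully flat and admits the section $g\mapsto (g,e)$, the morphism $\Delta$ is a closed immersion precisely when $e$ is. So it suffices to show $e$ is a closed immersion. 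Now $e$ is a monomorphism (being a section of $\pi\colon G\to S$), hence separated, and quasi-separatedness of $G/S$ is exactly quasi-compactness of $e$. I would then invoke that a quasi-compact universally closed monomorphism is a closed immersion, reducing everything to showing that $e$ is universally closed.

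To establish universal closedness I would apply the valuative criterion, legitimate because $e$ is quasi-compact. Given a valuation ring $R$ with fraction field $K$, a square expressing a $K$-point landing on the unit together with an $R$-point $b\colon \operatorname{Spec}R\to G$, I base change $G$ along $\pi\circ b$ so as to assume $S=\operatorname{Spec}R$; the required lift then amounts to the statement that any section $\sigma\colon \operatorname{Spec}R\to G$ with $\sigma_K=e_K$ equals $e$. The group law is what allows the comparison of two competing lifts to be phrased as this single section-versus-identity statement, so this is the heart of the matter.

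For this core statement I would introduce $Z$, the scheme-theoretic closure in $G$ of the generic identity point $\sigma(\eta)=e(\eta)$. Since $R$ is a valuation ring, $\mathcal{O}_Z$ is torsion-free and hence $Z$ is $R$-flat, with $Z_K=\operatorname{Spec}K$ the identity point; both $\sigma(s)$ and $e(s)$ lie in $Z_s$ by continuity. Flatness over the valuation ring makes $Z_K$ scheme-theoretically dense in $Z$, and likewise makes $Z_K\times_K Z_K$ dense in the flat scheme $Z\times_R Z$. As multiplication and inversion carry this dense generic fibre into the closed subscheme $Z$, they carry all of $Z\times_R Z$, respectively $Z$, into $Z$; thus $Z$ is a closed $R$-flat subgroup scheme with trivial generic fibre, and $\sigma$, whose scheme-theoretic image is $Z$, factors through $Z$.

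The remaining and hardest step is to show that this generically trivial flat subgroup scheme is trivial, i.e. $Z_s=\{e(s)\}$, and this is exactly where connectedness of the fibres is indispensable. The local rings of $Z$ at the points of $Z_s$ are local domains with fraction field $K$ dominating $R$, hence equal to $R$ by maximality of valuation rings under domination, so $Z\to \operatorname{Spec}R$ is quasi-finite and unramified; the only remaining issue is that $Z_s$ could a priori consist of several points, which is precisely the ``multiple identities'' phenomenon exhibited by non-separated flat group schemes. I expect the crux to be proving that $Z_s$ is connected: granting this, $Z_s$ is a single point with residue field $\kappa$, so $Z\to\operatorname{Spec}R$ is finite flat of degree one, hence an isomorphism, forcing $\sigma=e$. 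Connectedness of $Z_s$ should in turn be extracted from connectedness of the ambient fibre $G_s$: the extra points of $Z_s$ are specializations of the single generic identity lying in the closure of $e$, and were they distinct from $e(s)$ they would sit in distinct connected components of $G_s$, contradicting that $G_s$ is connected. Assembling these steps shows $e$ is universally closed, hence a closed immersion, and therefore $G/S$ is separated.
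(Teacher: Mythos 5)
Your reductions are all sound: separatedness is equivalent to the unit section $e$ being a closed immersion; quasi-separatedness makes $e$ a quasi-compact immersion, so it suffices to prove $e$ universally closed; the valuative criterion reduces this to showing that over a valuation ring $R$ with fraction field $K$ a section $\sigma$ with $\sigma_K=e_K$ equals $e$; and your analysis of $Z=\overline{\{e(\eta)\}}$ is correct --- it is an integral, $R$-flat closed subgroup scheme with function field $K$, each local ring $\mathcal{O}_{Z,z}$ at a point of $Z_s$ dominates $R$ inside $K$ and hence equals $R$, so $Z_s$ is a finite discrete set of $k(s)$-rational points forming a subgroup scheme of $G_s$. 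The gap is the final step, which you yourself flag as the crux: the claim that two distinct points of $Z_s$ ``would sit in distinct connected components of $G_s$'' is a non sequitur. The points of $Z_s$ are just finitely many closed points of $G_s$; when $G_s$ is positive-dimensional they all lie in the single connected component, and nothing about their being specialisations of a common generic point of $Z$ prevents this. Worse, the weaker statement your argument would need --- that a connected group scheme over a field admits no nontrivial finite \'{e}tale subgroup scheme --- is false ($\mu_n\subset\mathbb{G}_m$ with $n$ invertible). So connectedness of the fibre cannot, by itself, force $|Z_s|=1$; the hypothesis has to be brought to bear on how $e(s)$ and the putative second point sit inside the total space $G$, not merely inside the fibre.

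The paper uses connectedness quite differently, and this is essentially what is needed to close your gap. Connected group schemes over fields are irreducible, so an affine (hence separated) open $U\subset G$ meeting the special fibre is dense in every fibre. Translating $U$ by each of the two competing sections $f_1,f_2$ and intersecting, one finds a common affine open $V$ meeting the special fibre, flat and surjective over $S$, such that after base change to $V$ both translated sections land in $V\times_S U$, which is separated over $V$. Flatness of $V$ over the valuation ring makes $\pi^{-1}(K)$ schematically dense in $V$, so two sections of a separated $V$-scheme agreeing there agree everywhere; since $V\to S$ is an fpqc cover, $f_1=f_2$. If you want to keep your framework, you would have to import this translation argument (or an equivalent result such as Anantharaman's theorem on group schemes over Dedekind bases) to rule out a second point of $Z_s$; as written, the proof is not complete.
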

\begin{proof}
By the quasi-separatedness of $G/S$ we may apply the valuative criterion of separatedness. So suppose we have a commutative diagram

\[ \begin{tikzcd}
K \arrow{r} \arrow[swap]{d} & G \arrow{d} \\%
R \arrow{r}& S
\end{tikzcd}
\]
where $R$ is the spectrum of a valuation ring and $K$ is the spectrum of its fraction field. Suppose furthermore that we have two morphisms $f_i : R\rightarrow G$ for $i=1,2$ making the diagram commute. To show that $f_1=f_2$ it suffices to show that the two morphisms $\text{id}\times f_i : R\rightarrow G_R = G\times_S R$ induced by property of fibre product agree, and so we reduce to the case $S=R$ is the spectrum of a valuation ring and we have two sections $f_i: S \rightarrow G$ that agree on the generic point. Let $s$ be the closed point of $S$.

Let $U\subset G$ be an open subscheme containing a point in the special fibre, and pick $U$ so that it is affine, hence separated over $S$. Then $U$ is dense in all the fibres, as connected group schemes over fields are irreducible by Lemma 38.7.10 of [\cite{stacks}, Tag 047J].

Let $\tau_{f_i(S)}: G\rightarrow G$ denote the translation by $f_i(S)$ morphism, and define $V_i = \tau_{f_i(S)}^{-1}U$. Let $V\subset V_1\cap V_2$ be an affine open set that meets the special fibre. Then $V$ is separated, flat, and surjective over $S$. Let $\pi: V\rightarrow S$ denote the induced morphism to $S$. Observe that it is $fpqc$ as it is quasi-compact (being affine), as well as faithfully flat. Consider the fibre-product diagram

\[ \begin{tikzcd}
V\times_S G \arrow[r, "p_G"] \arrow[d]
& G \arrow[d] \\
V \arrow[r, "\pi"] \arrow[u, bend left, "f_{i,V}"]
& S \arrow[u, bend right, "f_i" right]
\end{tikzcd}
\]

It suffices to show that the sections $f_{i,V}$ agree, as $\pi$ is an $fpqc$-cover of $S$, hence an epimorphism in the category of schemes. If $\Delta: V \rightarrow V\times_S V$ is the diagonal morphism, then we have $\Delta + f_{i,V}(V): V \rightarrow V\times_S G$ is an element of $V\times_S U(G)$. By base change and the assumption that $U$ is separated over $S$, $V\times_S U$ is separated over $V$. By Lemma 11.10.5 of \cite{egaiv_iii} $\pi^{-1}K$ is dense in $V$ as $K$ is dense in $S$, and so the two sections $\Delta+ f_{i,V}$ agree on a dense open set $\pi^{-1}K$. Hence the $f_{i,V}$ are equivalent, and so the sections $f_i$ also agree.
\end{proof}


\newcommand{\etalchar}[1]{$^{#1}$}

\end{document}